\providecommand{\lat}[1]{\protect\LaTeX{}}
\newcommand{\mntt}[1]{\mbox{\ntt #1}}
\providecommand{\pkg}[1]{\mntt{#1}}
\providecommand{\fn}[1]{\mntt{#1}}
\chardef\bslash=`\\
\let\charhack=\char
\providecommand{\cn}[1]{\mntt{\bslash\charhack`#1}}
  \def\mntt#1{\texttt{\upshape #1}}%
  \def\bslash{\texorpdfstring{.}{\textbackslash}}
\def\section{\@startsection{section}{1}%
  \z@{9pt plus12pt}{1.5ex}%
  {\bfseries\global\@afterindentfalse}}
\def\subsection{\@startsection{subsection}{2}%
	\z@{9pt plus12pt}{1.5ex}%
{\bfseries\global\@afterindentfalse}}
\def\@seccntformat#1{\csname the#1\endcsname.\enskip}
\def\@startsection #1#2#3#4#5#6{%
  \if@noskipsec \leavevmode\par\nobreak\vskip\medskipamount\@nobreaktrue\fi
  \par
  \@tempskipa#4\relax \@afterindenttrue
  \ifdim\@tempskipa<\z@ \@tempskipa-\@tempskipa \@afterindentfalse \fi
  \if@nobreak \everypar{}%
  \else \addpenalty\@secpenalty \addvspace\@tempskipa
  \fi
  \@ifstar{\@ssect{#3}{#4}{#5}{#6}}%
          {\@dblarg{\@sect{#1}{#2}{#3}{#4}{#5}{#6}}}%
}
\def\@sect#1#2#3#4#5#6[#7]#8{%
  \ifnum #2>\c@secnumdepth
    \let\@svsec\@empty
  \else
    \refstepcounter{#1}%
    \protected@edef\@svsec{\@seccntformat{#1}\relax}%
  \fi
  \@tempskipa #5\relax
  \ifdim \@tempskipa>\z@
    \begingroup
      #6{%
        \@hangfrom{\hskip #3\relax\@svsec}%
          \interlinepenalty \@M #8\@@par}%
    \endgroup
    \csname #1mark\endcsname{#7}%
    \addcontentsline{toc}{#1}{%
      \ifnum #2>\c@secnumdepth \else
        \protect\numberline{\csname the#1\endcsname}%
      \fi
      #7}%
  \else
    \def\@svsechd{%
      #6{\hskip #3\relax
      \@svsec #8\@addpunct.}%
      \csname #1mark\endcsname{#7}%
      \addcontentsline{toc}{#1}{%
        \ifnum #2>\c@secnumdepth \else
          \protect\numberline{\csname the#1\endcsname}%
        \fi
        #7}}%
  \fi
  \@xsect{#5}}
\def\@addpunct#1{%
  \relax\ifhmode
    \ifnum\spacefactor>\@m \else#1\fi
  \fi}
\let\zz@l@section\l@section
\def\l@section{\if@noskipsec \mbox{}\par\fi\zz@l@section}
\let\debugit\relax
\def\boxo{%
  \typeout{Box0: \the\wd0x\the\ht0+\the\dp0, splitting to
    \the\dimen@\space for \number\cols\space columns}%
}
\newdimen\shiftlistright
\def\splitlist{%
  \begingroup
  \textwidth=400pt 
  \dimen@=\wd0 \advance\dimen@\wdadjust \advance\dimen@\colsep
  \cols\textwidth \advance\cols\colsep
  \divide\cols\dimen@ \advance\cols\coladjust
  \ifnum\cols<\z@ \cols=\@ne \fi
  \dimen@\ht0 \divide\dimen@\cols
  \advance\dimen@\baselineskip \divide\dimen@\baselineskip
  \multiply\dimen@\baselineskip
  \splittopskip10pt\relax \splitmaxdepth\maxdepth
  \advance\dimen@\splittopskip \advance\dimen@-\baselineskip
  \advance\dimen@\htadjust
  \vbadness\@M 
\debugit
  \def\do{%
    \advance\curcol 1
    \setbox2=\vsplit0 to\dimen@
    \ifnum\curcol=\cols \vtop \else \vtop to\dimen@\fi
      {\unvbox 2 }\hskip\colsep
    \ifdim\ht0>\z@ \expandafter\do\fi
  }%
  \setbox2=\vsplit0 to\baselineskip 
  \hbox to\textwidth{\hskip\shiftlistright\quad\curcol=0 \do\unskip\hfil}%
  \nobreak\nointerlineskip\hbox{}%
  \endgroup
  \global\shiftlistright20pt
}
\def\dosymbol#1{\csname do#1\endcsname}
\newcommand{\ttfont}{%
  \normalfont\ttfamily
  \global\expandafter\let\expandafter\ttfont\the\font
}
\newdimen\wdadjust \newdimen\htadjust
\newskip\colsep \newcount\cols \newcount\curcol \newcount\coladjust
\newbox\symstrut
\edef\symnote#1{%
  \noexpand\expandafter\noexpand\symnoteA
  \noexpand\meaning#1\relax\string h?"00\noexpand\@nil
}
\edef\x#1h{\endgroup #1\string h}\x
\def\symnoteA#1h#2#3"#4#5#6\@nil{%
  \if c#2\relax
    \expandafter\ifx\csname#2#3\endcsname\char
      \ifcase#5\relax \or\or\or\or
        \symnoteB[to 0pt]{a}\or\symnoteB[to 0pt]{b}\else\fi
    \fi
  \fi
}
\newcommand{\symnoteB}[2][]{%
  \hbox #1{%
    \raise0.9ex\hbox{\fontfamily{cmr}\fontseries{m}\scriptsize#2}\hss
  }%
  \gdef\containsMSABM{TT}%
}
\newcommand{\printSymbol}[1]{%
  \hbox{\llap{\unhcopy\symstrut $#1$ }\ttfont\string#1%
    \symnote#1%
  }%
}
\newcommand{\printBig}[1]{\hbox{\llap{$\Big#1$ }\ttfont\string#1}}
\newcommand{\doVar}[2]{\expandafter\printSymbol\csname#1\endcsname}
\newcommand{\doDeL}[2]{\expandafter\printBig\csname#1\endcsname}%
\newcommand{\doDeLR}[3]{%
  \hbox{\llap{$
    \expandafter\Bigl\csname#1\endcsname\,
    \expandafter\Bigr\csname#2\endcsname$ }%
    \ttfont\bslash#1 \bslash#2}%
}
\newcommand{\doFsw}[2]{%
  \hbox{\kern-\parindent $\csname#1\endcsname{R}$\space
    \ttfont\string#1\string{R\string}}%
}
\newcommand{\doAcc}[2]{%
  \hbox{\llap{\unhcopy\symstrut$\csname#1\endcsname{x}$ }%
    \ttfont\bslash#1\string{x\string}}%
}
\newcommand{\doAccw}[2]{%
  \hbox{\llap{\unhcopy\symstrut$\csname#1\endcsname{xxx}$ }%
    \ttfont\bslash#1\string{xxx\string}}%
}
\newcommand{\alias}[1]{$\csname#1\endcsname$ \cn{#1}}
  \newcommand\htlink{\href}
  \renewcommand\htlink[2]{#1\htlinkfootnote{\ht@url{#2}}}
  \newcommand{\htlinkfootnote}[1]{}
\providecommand{\url}{\texttt}
\numberwithin{equation}{section}
\DeclarePairedDelimiter{\norm}{\lVert}{\rVert}
\DeclarePairedDelimiter{\abs}{\lvert}{\rvert}
\DeclarePairedDelimiter{\pro}{\langle}{\rangle}
\let\url\texttt
\newcommand\reallywidehat[1]{%
\savestack{\tmpbox}{\stretchto{%
  \scaleto{%
    \scalerel*[\widthof{\ensuremath{#1}}]{\kern-.6pt\bigwedge\kern-.6pt}%
    {\rule[-\textheight/2]{1ex}{\textheight}}
  }{\textheight}%
}{0.5ex}}%
\stackon[1pt]{#1}{\tmpbox}%
}
\providecommand\abstractname{Abstract}
\def\abstract{}
\renewenvironment{abstract}{%
  \centering\small
  \textbf\abstractname
  \list{}{\leftmargin2cm \rightmargin\leftmargin}
  \item\relax
}{%
  \endlist \par\bigskip
}
\providecommand{\abs}[1]{\lvert#1\rvert}
\providecommand{\pdfinfo}[1]{}
\theoremstyle{plain}
\newtheorem{thm}{Theorem}[section]
\newtheorem{prop}[thm]{Proposition}
\newtheorem{cor}[thm]{Corollary}
\theoremstyle{definition}
\theoremstyle{remark}
\newtheorem*{rem}{Remark}
\newcommand{\N}{\mathbb{N}}
\newcommand{\R}{\mathbb{R}}
\newcommand{\defi}{\coloneqq}
\begin{document}
\title{\textbf{Radiative Transfer with long-range interactions in the half-space} }

\markboth{RTE  with long-range interactions in the half-space}
         {RTE  with long-range interactions in the half-space}
   

%
%
%
%
%
\author{Ricardo Alonso and Edison  Cuba}

\maketitle

\setcounter{tocdepth}{3}

%
%
%
%
\maketitle

\begin{abstract}
We study the well-posedness and regularity theory for the Radiative Transfer equation in the peaked regime posed in the half-space.  An average lemma for the transport equation in the half-space is stablished and used to generate interior regularity for solutions of the model.  The averaging also shows a fractional regularization gain up to the boundary for the spatial derivatives.
\medskip

\noindent \textbf{Keywords}:  Radiative transfer equation; initial-boundary value pro\-blem; half-space domain; average lemma;  uniqueness of solutions.
\end{abstract}
%


\section{Introduction}\label{int}
Radiative transfer is the physical phenomenon of energy transfer in the form of electromagnetic radiation. The radiative transfer equation (RTE) in the half-space can be written as
\begin{equation}\label{eq:1}
\left\{
\begin{array}{cll}
\partial_t u+\theta\cdot\nabla_x u =\mathcal{I}(u) & \text{in} & (0,T)\times \mathbb{R}^{d}_+\times \mathbb{S}^{d-1},\\
u=u_0 & \text{on} & \{t=0\}\times \mathbb{R}^{d}_+\times \mathbb{S}^{d-1},\\
u=g  & \text{on} & (0,T) \times \{ x_{d} = 0 \}\times\{ \theta_{d}>0 \},
\end{array}\right.
\end{equation}
where $T$ is an arbitrary time, $u = u(t, x, \theta)$ is the radiation intensity distribution in $(0,T)\times \mathbb{R}^{d}_+\times \mathbb{S}^{d-1}$, and where the half-plane has been defined as $\R^{d}_{+}=\{x\,|\, x_{d}>0\}$.  The initial radiation distribution $u_0(x,\theta)$ and the boundary radiation intensity $g(t, \bar{x}, \theta)$ are assumed nonnegative most of the time\footnote{The $L^{2}$ theory of the equation will not require non negativity of the data.  However, \textit{a priori} estimates based on the $L^{1}$ integrability of solutions will require it.}.  We adopt the bar notation $\bar{x}$ to represent points in $\partial\mathbb{R}^{d}_{+}=\{x\,|\,x_{d}=0\}\sim\mathbb{R}^{d-1}$ for any $d\geq3$.

\noindent
The scattering operator is defined as
\begin{equation}
\mathcal{I}(u)\defi \mathcal{I}_{b_s}(u)= \int_{\mathbb{S}^{d-1}}  (u(\theta^{\prime})-u(\theta))b_s(\theta,\theta^{\prime}) \;\mathrm{d}\theta^{\prime}.
\end{equation}
In this work we are interested in the highly forward-peaked regime in the half-space where the angular scattering kernel takes the form
\begin{equation}
b_s(\theta,\theta^{\prime}) =\frac{b(\theta\cdot\theta^{\prime} )}{(1-\theta\cdot\theta^{\prime} )^{\frac{d-1}{2}+s}},\qquad \qquad s\in \left(0,\min \left\lbrace 1,\tfrac{d-1}{2}\right\rbrace\right),
\end{equation}
where  $b(z) \geq 0$ has some smoothness in the neighborhood of $z = 1$. More precisely, we will consider in the sequel its decomposition into two nonnegative components
\begin{equation}\label{b-scattering}
b(z)=b(1)+\tilde{b}(z), \quad \text{where} \quad b(1)>0\quad\text{and}\quad\frac{\tilde{b}(z)}{(1-z)^{1+s}} =: h(z) \in L^{1}(-1,1).
\end{equation}
The weak formulation of this operator is given, for any sufficiently regular test function $\psi$, by
\begin{equation}\label{1.6}
\begin{split}
\MoveEqLeft
\int_{\mathbb{S}^{d-1}}\mathcal{I}(u)(\theta)\psi(\theta)\;\mathrm{d}\theta\defi -\dfrac{1}{2} \int_{\mathbb{S}^{d-1}} \int_{\mathbb{S}^{d-1}} \left(u(\theta^{\prime})-u(\theta)\right)\left(\psi(\theta^{\prime})-\psi(\theta)\right)b_s(\theta,\theta^{\prime})\;\mathrm{d}\theta \;\mathrm{d}\theta^{\prime}.
\end{split}
\end{equation}
The RTE \eqref{eq:1} serves as a model for physical phenomena associated to wave propagation in random media, for example propagation of high frequency waves that are weakly coupled due to heterogeneity, or regimes associated to ``long range'' propagation of waves in weakly heterogeneous media.  Related to the latter, the classical example is the highly forward-peaked regime commonly found in neutron transport, atmospheric radiative transfer, and optical imaging, see \cite{pom, larsen, larsen2}.  The RTE model in the forward-peaked regime is now commonly used for medial imaging inversion since it describes fairly well the propagation of waves through biological tissue in such conditions.

\smallskip
\noindent  
The model \eqref{eq:1} has been studied in the whole domain using slightly different approaches, based on hypo-ellipticity techniques \cite{bouchut}, in the references \cite{alonso,Gomez}.  References treating problems with boundaries are scarce in the context of kinetic equations with singular scattering, however, for the classical kinetic Fokker-Planck equation for absorbing boundary we refer to \cite{Hwang1,Hwang2}.

\smallskip
\noindent
In this contribution, we follow the spirit of the arguments brought in \cite{alonso} and adapt it to consider a flat boundary which is an important case in applications.  The technique is interesting since it can be used for efficient numerical implementation of the equation using spectral methods based on the explicit formulas of Proposition \ref{prop2.1}.   After discussing the generalities of the problem in the remainder of Section 1, we proceed to prove the central result of the paper about hypo-elliptic averaging for the equation with flat and prescribed boundary in Section 2.  In Section 3, \textit{a priori} estimates which lead to interior smoothness of solutions are established.  Finally, in Section 4 with develop the well posedness theory of the model.  The argument is fairly simple and based on the classical literature of the radiative transfer in convex domains.  

\subsection{Definition of solution and basic notation.}
Fix $T>0$.  Take boundary data
$$0\leq \big( u_0,\sqrt{\theta_{d}}\,g \big) \in L^{1}(\mathbb{R}^{d}_{+}\times\mathbb{S}^{d-1})\times L^{2}\big((0,T];L^{2}(\{x_{d}=0\}\times\{\theta_{d}>0\})\big)\,.$$
A nonnegative function
$$u\in L^{\infty}\big([0,T]; L^{1}(\mathbb{R}^{d}_{+}\times\mathbb{S}^{d-1})\big)$$
is a weak solution of the RTE provided \eqref{eq:1} is satisfied in the sense of distributions $\mathcal{D}'\big( [0,T)\times\{x_d\geq0\}\times\mathbb{S}^{d-1} \big)$.

\noindent
In order to simplify notation in treating boundaries we consider the sets
$$\Gamma^{\pm}=\big\{ (\bar{x},\theta)\in \partial\mathbb{R}^{d}_{+}\times \mathbb{S}^{d-1} \, \big| \, \pm \theta_{d}<0\big\}\,,\qquad \Sigma^{T}_{\pm}=(0,T)\times \Gamma^{\pm}\,,$$
and the spaces $L^{2}(\Sigma^{T}_{\pm}\; ;\; | \theta_{d} | \;\mathrm{d}\bar{x}  \; \mathrm{d}\theta \; \mathrm{d}t)$ to be the set of square integrable functions in $\Sigma^{T}_{\pm}$ with respect to the measure $| \theta_{d}  | \;\mathrm{d}\bar{x} \; \mathrm{d}\theta \; \mathrm{d}t$.  Of course, $\text{d}\bar{x}$ represents the standard Lebesgue measure in $\partial\mathbb{R}^{d}_{+}$. In the sequel, we may simply use the shorthand $L^2(\Sigma^{T}_{\pm})$ for such spaces.  In addition, it will be common to use the shorthand notation $L^{2}_{t,x,\theta}$ when the domain of the functions is clear from the context.
\subsection{Representation of the projected scattering operator.}
The analysis in this paper is based on the stereographic projection of the kinetic variable $\theta\in\mathbb{S}^{d-1}$ on the plane $v\in\mathbb{R}^{d-1}$, see \cite{alonso}.  The benefit of such approach is the fact that the geometry of the kinetic space is replaced by Bessel weights which are manageable with Fourier methods and render explicit formulas for numerical implementation.

\noindent
Recall that the stereographic projection $\mathcal{S}:\mathbb{S}^{d-1}\rightarrow \R^{d-1}$ is given by
\begin{equation}\label{stero1}
v_{i} = \mathcal{S}(\theta)_i :=\dfrac{\theta_i}{1-\theta_d},\quad 1\leq i\leq d-1\,.
\end{equation}
Its inverse $\mathcal{J}:\R^{d-1}\rightarrow\mathbb{S}^{d-1} $ is given by
\begin{equation}\label{stero2}
\mathcal{J}_{i}(v)=\dfrac{2v_i}{\pro{v}^{2}},\quad 1\leq i\leq d-1\qquad \text{and}\qquad\mathcal{J}_{d}(v)=\dfrac{\abs{v}^{2}-1}{\pro{v}^{2}},
\end{equation}
where $\pro{\cdot}\defi \sqrt{\abs{\cdot}^{2}+1}$ is the Japanese bracket.  The Jacobian of such transformations can be computed as
$$\mathrm{d}v=\dfrac{\mathrm{d}\theta}{(1-\theta_d)^{d-1}}\qquad \text{and} \qquad \mathrm{d}\theta=\dfrac{2^{d-1}\;\mathrm{d}v}{\pro{v}^{2(d-1)}}\,.$$
Using the shorthanded notation $\theta=\mathcal{J}(v)$ and $\theta^{\prime}=\mathcal{J}(v^{\prime})$ we obtain that
$$1-\theta\cdot \theta^{\prime}=2\,\dfrac{\abs{v-v^{\prime}}^{2}}{\pro{v}^{2}\pro{v^{\prime}}^{2}}.$$
In the sequel we use the shorthand $u_{\mathcal{J}} :=u \circ \mathcal{J}:\mathbb{R}^{d-1}\rightarrow \mathbb{S}^{d-1}\rightarrow\mathbb{R}$ for pull back functions, and with a capital letter we introduce the function $U_{\mathcal{J}}:= \frac{u_{\mathcal{J}}}{\pro{\cdot}^{d-1-2s}}$ that will be important along the document.
\begin{prop}\label{prop2.1}
For any sufficiently regular function $u$ in the sphere the stereographic projection of the operator $\mathcal{I}_{b(1)}$ is given by
\begin{equation}\label{eq2.1b}
\begin{split}
\dfrac{\left[ \mathcal{I}_{b(1)}(u)\right]_{\mathcal{J}}}{\pro{\cdot}^{d-1+2s}}& = \dfrac{2^{\frac{d-1}{2}-s}b(1)}{c_{d-1,s}}\left(-(-\Delta_v)^{s}U_{\mathcal{J}}+u_{\mathcal{J}}\,(-\Delta_v)^{s}\bigg(\dfrac{1}{\pro{\cdot}^{d-1-2s}}\bigg)\right)
\\
 & = \dfrac{2^{\frac{d-1}{2}-s}\,b(1)}{c_{d-1,s}}\left(-(-\Delta_v)^{s}U_{\mathcal{J}} + c_{d,s}\,\dfrac{u_{\mathcal{J}}}{\pro{\cdot}^{d-1+2s}}\right).
\end{split}
\end{equation}
As a consequence, we have that
\begin{equation}\label{eq10}
  \begin{split}
    \MoveEqLeft
  \dfrac{1}{b(1)}\int_{\mathbb{S}^{d-1}}\mathcal{I}_{b(1)}(u)(\theta)\,\overline{u(\theta)}\,\mathrm{d}\theta
=  -c_{d,s}\,\norm{(-\Delta_v)^{s/2}U_{\mathcal{J}}}^{2}_{L^{2}(\R^{d-1})}+C_{d,s}\norm{u}^{2}_{L^{2}(\mathbb{S}^{d-1})},
       \end{split}
\end{equation}
for some explicit positive constants $c_{d,s}$ and $C_{d,s}$. Furthermore,  defining the differential operator $(-\Delta_{\theta})^{s}$ acting on functions defined on the sphere by the formula
\begin{equation}\label{2.4}
\big[ (-\Delta_{\theta})^{s}u \big]_{\mathcal{J}}\defi \pro{\cdot}^{d-1+2s}\,(-\Delta_{v})^{s}U_{\mathcal{J}},
\end{equation}
the scattering operator, $\mathcal{I}_{b_{s}}=\mathcal{I}_{b(1)}+\mathcal{I}_{h}$, simply writes as the sum of a singular part and $L^{2}_{\theta}-$ bounded part
\begin{equation}\label{2.5}
\mathcal{I}_{b_s}=-D\,(-\Delta_{\theta})^{s}+c_{s,d}\,I+\mathcal{I}_{h},
\end{equation}
where $D=2^{\frac{d-1}{2}-s}\frac{b(1)}{c_{d-1,s}}$ is the diffusion constant.
\end{prop}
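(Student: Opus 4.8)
The plan is to push the singular integral defining $\mathcal{I}_{b(1)}$ through the stereographic projection, where it becomes a fractional Laplacian dressed with Bessel weights, and then read off \eqref{eq2.1b}, \eqref{eq10} and \eqref{2.5} in turn. First I would write $\mathcal{I}_{b(1)}(u)(\theta)=b(1)\,\mathrm{P.V.}\!\int_{\mathbb{S}^{d-1}}\bigl(u(\theta')-u(\theta)\bigr)(1-\theta\cdot\theta')^{-\frac{d-1}{2}-s}\,\mathrm{d}\theta'$, the principal value being relevant only for $s\geq\tfrac12$. Substituting $\theta=\mathcal{J}(v)$, $\theta'=\mathcal{J}(v')$ and inserting the two identities recorded just above, $1-\theta\cdot\theta'=2|v-v'|^{2}\pro{v}^{-2}\pro{v'}^{-2}$ and $\mathrm{d}\theta'=2^{d-1}\pro{v'}^{-2(d-1)}\mathrm{d}v'$, every power of $\pro{v}$ factors out of the integral; dividing by $\pro{\cdot}^{d-1+2s}$ leaves
\[
\frac{[\mathcal{I}_{b(1)}(u)]_{\mathcal{J}}(v)}{\pro{v}^{d-1+2s}}=2^{\frac{d-1}{2}-s}\,b(1)\,\mathrm{P.V.}\!\int_{\R^{d-1}}\frac{u_{\mathcal{J}}(v')\pro{v'}^{-(d-1-2s)}-u_{\mathcal{J}}(v)\pro{v'}^{-(d-1-2s)}}{|v-v'|^{d-1+2s}}\,\mathrm{d}v'.
\]

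\textbf{Step 2.} Next I would invoke the exact pointwise identity $u_{\mathcal{J}}(v')\pro{v'}^{-(d-1-2s)}-u_{\mathcal{J}}(v)\pro{v'}^{-(d-1-2s)}=\bigl(U_{\mathcal{J}}(v')-U_{\mathcal{J}}(v)\bigr)+u_{\mathcal{J}}(v)\bigl(\pro{v}^{-(d-1-2s)}-\pro{v'}^{-(d-1-2s)}\bigr)$ and split the integral along it; each resulting principal-value integral converges because both numerators now vanish to first order at $v'=v$ and decay fast enough at infinity. By the singular-integral representation of $(-\Delta_v)^{s}$ on $\R^{d-1}$ with normalising constant $c_{d-1,s}$, the first piece equals $-c_{d-1,s}^{-1}(-\Delta_v)^{s}U_{\mathcal{J}}$ and the second $c_{d-1,s}^{-1}u_{\mathcal{J}}\,(-\Delta_v)^{s}\bigl(\pro{\cdot}^{-(d-1-2s)}\bigr)$, which is exactly the first line of \eqref{eq2.1b}.

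\textbf{Step 3 (the main point).} The second line of \eqref{eq2.1b} then reduces to the identity $(-\Delta_v)^{s}\bigl(\pro{\cdot}^{-(d-1-2s)}\bigr)=c_{d,s}\,\pro{\cdot}^{-(d-1+2s)}$, i.e.\ that $(1+|v|^{2})^{-\frac{(d-1)-2s}{2}}$, the Aubin--Talenti extremal of the fractional Sobolev (Hardy--Littlewood--Sobolev) inequality in dimension $d-1$, is carried by $(-\Delta_v)^{s}$ onto a constant multiple of $(1+|v|^{2})^{-\frac{(d-1)+2s}{2}}$. This is the one genuinely computational ingredient; it is classical and can be obtained either from the Euler--Lagrange equation satisfied by that extremal or directly from the Fourier transform of the Bessel kernel, and it pins down $c_{d,s}$ as an explicit ratio of Gamma functions. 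I expect this identity, and the bookkeeping of its constant, to be the main obstacle — everything around it is changes of variables and routine fractional calculus.

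\textbf{Step 4 (quadratic form and operator form).} For \eqref{eq10} I would multiply the second line of \eqref{eq2.1b} by $\overline{u(\theta)}$, change variables back to $v$ with Jacobian $2^{d-1}\pro{v}^{-2(d-1)}$, and use $\overline{u_{\mathcal{J}}}\,\pro{\cdot}^{-(d-1-2s)}=\overline{U_{\mathcal{J}}}$ to reduce the integral to $2^{d-1}\cdot 2^{\frac{d-1}{2}-s}b(1)c_{d-1,s}^{-1}\int_{\R^{d-1}}\bigl(-(-\Delta_v)^{s}U_{\mathcal{J}}+c_{d,s}\pro{\cdot}^{-(d-1+2s)}u_{\mathcal{J}}\bigr)\overline{U_{\mathcal{J}}}$; Plancherel gives $\int(-\Delta_v)^{s}U_{\mathcal{J}}\,\overline{U_{\mathcal{J}}}=\norm{(-\Delta_v)^{s/2}U_{\mathcal{J}}}_{L^{2}(\R^{d-1})}^{2}$, and the zeroth-order term equals $c_{d,s}\int_{\R^{d-1}}|u_{\mathcal{J}}|^{2}\pro{\cdot}^{-2(d-1)}=c_{d,s}\,2^{-(d-1)}\norm{u}_{L^{2}(\mathbb{S}^{d-1})}^{2}$ by undoing the substitution, so that after dividing by $b(1)$ one obtains \eqref{eq10} with explicit positive constants. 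Finally \eqref{2.5} is pure bookkeeping: substituting the definition \eqref{2.4} into the first line of \eqref{eq2.1b} rewrites $\mathcal{I}_{b(1)}=-D(-\Delta_\theta)^{s}+c_{s,d}I$ with $D=2^{\frac{d-1}{2}-s}b(1)/c_{d-1,s}$, and adding $\mathcal{I}_{h}$ — whose kernel $h(\theta\cdot\theta')(1-\theta\cdot\theta')^{\frac{3-d}{2}}$ is only weakly singular, since $(1-z)^{\frac{3-d}{2}}(1-z^{2})^{\frac{d-3}{2}}=(1+z)^{\frac{d-3}{2}}$ is bounded and $h\in L^{1}(-1,1)$, whence $\mathcal{I}_{h}$ is $L^{2}_{\theta}$-bounded by Schur's test — yields \eqref{2.5}.
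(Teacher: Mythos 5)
Your argument is correct and is essentially the proof the paper relies on: the paper itself only cites \cite{alonso} for Proposition \ref{prop2.1}, and your computation (stereographic change of variables, the splitting of the numerator producing $(-\Delta_v)^{s}U_{\mathcal{J}}$ plus $u_{\mathcal{J}}(-\Delta_v)^{s}\langle\cdot\rangle^{-(d-1-2s)}$, the classical identity $(-\Delta_v)^{s}\langle\cdot\rangle^{-(d-1-2s)}=c_{d,s}\langle\cdot\rangle^{-(d-1+2s)}$, Plancherel for \eqref{eq10}, and Schur's test for the $L^{2}_{\theta}$-boundedness of $\mathcal{I}_{h}$) is exactly the route taken there. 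The constants you obtain in \eqref{eq10} differ from $c_{d,s}$ of \eqref{eq2.1b} only by the harmless explicit factors $2^{d-1}$, $2^{\frac{d-1}{2}-s}/c_{d-1,s}$, consistent with the statement's ``some explicit positive constants.''
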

\begin{proof}
For the details of the proof see \cite{alonso}.
\end{proof}
\subsection{ Functional spaces.}
Recalling the notation $U_{\mathcal{J}}:= \frac{u_{\mathcal{J}}}{\pro{v}^{d-1-2s}}$, the fractional Sobolev spaces $H^{s}(\mathbb{S}^{d-1})$ is defined as
$$H^{s}_{\theta}\defi \big\{ u \in L^{2}_{\theta}: (-\Delta_v)^{s/2}U_{\mathcal{J}}\in L^{2}_{v}\big\},\qquad s\in (0,1),$$
endowed with the inner product
$$\pro{u,w}_{H^{s}_{\theta}}\defi \pro{(-\Delta_v)^{s/2}U_{\mathcal{J}},(-\Delta_v)^{s/2}W_{\mathcal{J}}}_{L^{2}(\R^{d-1})}\,.$$

\noindent The following useful representation of the inner product norm in $H^{s}(\mathbb{S}^{d-1})$ which follows directly from \eqref{eq10} and \eqref{1.6} will be important in the next sections,
\begin{equation}\label{2.12}
\begin{split}
\norm{u}^{2}_{H^{s}_{\theta}(\mathbb{S}^{d-1})} &\sim -\int_{\mathbb{S}^{d-1}}\mathcal{I}_{b(1)}(u)(\theta)\,\overline{u(\theta)}\,\mathrm{d}\theta + \int_{\mathbb{S}^{d-1}}\abs{u(\theta)}^{2}\;\mathrm{d}\theta\\
&\sim \int_{\mathbb{S}^{d-1}}\int_{\mathbb{S}^{d-1}}\dfrac{(u(\theta^{\prime})-u(\theta))^{2}}{\abs{\theta^{\prime}-\theta}^{d-1+2s}}\;\mathrm{d}\theta^{\prime}\;\mathrm{d}\theta+\norm{u}^{2}_{L^{2}(\mathbb{S}^{d-1})}.
\end{split}
\end{equation}
where, for the second equivalence, we used that $2(1-\theta\cdot \theta^{\prime})=\abs{\theta^{\prime}-\theta}^{2}$ valid for any two unitary vectors.  We also have, by a straightforward computation, the Sobolev embedding $H^{s}_{\theta} \hookrightarrow L^{p_{s}}_{\theta}$
\begin{equation}\label{2.14}
\norm{u}_{L^{p_s}_{\theta}} \leq C_{d,s}\norm{u}_{H^{s}_{\theta}}\,,\qquad \frac{1}{p_s}=\frac{1}{2}-\frac{s}{d-1}\,.
\end{equation}

\subsection{Natural a priori energy estimates.}
Assume the existence of a sufficiently smooth nonnegative solution $u$ to the RTE problem (\ref{eq:1}).  Direct integration in $(x,\theta)\in\mathbb{R}^{d}_{+}\times\mathbb{S}^{d-1}$ and time $0\leq t^{\prime} \leq t \leq T$, together with the divergence theorem and the fact that $\mathcal{I}$ is a mass conceving operator $\int_{ \mathbb{S}^{d-1}} \mathcal{I}(u)\, \text{d}\theta = 0$ gives that
\begin{equation}\label{eq-mass}
\begin{split}
&\int_{\R^{d}_{+}}\int_{\mathbb{S}^{d-1}} u(t,x,\theta)\; \mathrm{d}\theta\; \mathrm{d}x + \int^{t}_{t^{\prime}}\int_{\Gamma^{+}} u\, ( \theta\cdot n(\bar{x}) )\; \mathrm{d}\theta\; \mathrm{d}\bar{x}\; \mathrm{d}\tau\\
&\qquad= \int_{\R^{d}_{+}}\int_{\mathbb{S}^{d-1}} u(t^{\prime},x,\theta)\; \mathrm{d}\theta\; \mathrm{d}x + \int^{t}_{t^{\prime}}\int_{\Gamma^{-}}g \big| \theta\cdot n(\bar{x}) \big|\; \mathrm{d}\theta\; \mathrm{d}\bar{x}\; \mathrm{d}\tau\,.
\end{split}
\end{equation}
This identity describes the mass in the system.  The boundary terms, from left to right, represent the out flux and in flux of mass through the boundary.

\noindent
Now, in order to obtain the description of the energy multiply the RTE equation by $u$ and integrate in the variables $(x,\theta)\in\mathbb{R}^{d}_{+}\times\mathbb{S}^{d-1}$ to obtain that
\begin{align*}
&\tfrac{1}{2}\frac{{\rm d}}{ {\rm d }t}\int_{\mathbb{R}^{d}_{+}}\int_{\mathbb{S}^{d-1}} u^{2}\; \mathrm{d}\theta\; \mathrm{d}x+\tfrac{1}{2}\int_{\mathbb{R}^{d}_{+}}\int_{\mathbb{S}^{d-1}}\theta\cdot \nabla_x u^{2}\; \mathrm{d}\theta\; \mathrm{d}x\\
&=\tfrac{1}{2}\frac{{\rm d}}{ {\rm d }t}\int_{\mathbb{R}^{d}_{+}}\int_{\mathbb{S}^{d-1}} u^{2}\; \mathrm{d}\theta\; \mathrm{d}x+\tfrac{1}{2}\int_{\partial\mathbb{R}^{d}_{+}}\int_{\mathbb{S}^{d-1}} u^{2} \left(\theta\cdot n(\bar{x}) \right)\; \mathrm{d}\theta\; \mathrm{d}\bar{x}=\int_{\mathbb{R}^{d}_{+}}\int_{\mathbb{S}^{d-1}}\mathcal{I}(u)\,u\, \mathrm{d}\theta\; \mathrm{d}x\,,
\end{align*}
where we used, again, the divergence theorem in the second step.  Therefore,
\begin{equation}\label{eq 2.3}
\begin{split}
\tfrac{1}{2}\frac{{\rm d}}{ {\rm d }t}&\int_{\mathbb{R}^{d}_{+}}\int_{\mathbb{S}^{d-1}} u^{2}\; \mathrm{d}\theta\; \mathrm{d}x+\tfrac{1}{2}\int_{\Gamma^{+}} u^{2}\left(\theta\cdot n(\bar{x})\right)\; \mathrm{d}\theta\; \mathrm{d}\bar{x}\\
&\qquad=\tfrac{1}{2}\int_{\Gamma^{-}}g^{2}\big| \theta\cdot n(\bar{x})\big|\; \mathrm{d}\theta\; \mathrm{d}\bar{x}+\int_{\mathbb{R}^{d}_{+}}\int_{\mathbb{S}^{d-1}}\mathcal{I}(u)\, u\, \mathrm{d}\theta\; \mathrm{d}x.
\end{split}
\end{equation}
Since $\int_{\mathbb{S}^{d-1}}\mathcal{I}(u)\, u\, \mathrm{d}\theta\leq0$, we can integrate in time  $0<t^{\prime}\leq t<T$ to conclude that
\begin{equation}\label{eq2.3-1}
\begin{split}
&\tfrac{1}{2}\int_{\R^{d}_{+}} \| u(t) \|^{2}_{L^{2}_{\theta}}\; \mathrm{d}x + \tfrac{1}{2}\int^{t}_{t^{\prime}}\int_{\Gamma^{+}} u^{2} ( \theta\cdot n(\bar{x}) )\; \mathrm{d}\theta\; \mathrm{d}\bar{x}\; \mathrm{d}\tau\\
&\qquad\qquad \leq \tfrac{1}{2}\int_{\R^{d}_{+}}\int_{\mathbb{S}^{d-1}} \| u(t^{\prime})\|^{2}_{L^{2}_{\theta}}\;\mathrm{d}x + \tfrac{1}{2}\int^{t}_{t^{\prime}}\int_{\Gamma^{-}} g^{2}\big| \theta\cdot n(\bar{x}) \big|\; \mathrm{d}\theta\; \mathrm{d}\bar{x}\; \mathrm{d}\tau\,.
\end{split}
\end{equation}
This estimate can be upgraded to add the diffusion term in the scattering angle using relation \eqref{2.12}.  We are led to
\begin{equation}\label{eq2.4}
\begin{split}
&\tfrac{1}{2}\int_{\R^{d}_{+}} \| u(t) \|^{2}_{L^{2}_{\theta}}\; \mathrm{d}x+D_0\int^{t}_{t^{\prime}}\int_{\R^{d}_{+}}\norm{u}^{2}_{H^{s}_{\theta}} \; \mathrm{d}x\; \mathrm{d}\tau \\
&\qquad+ \tfrac{1}{2}\int^{t}_{t^{\prime}}\int_{\Gamma^{+}} u^{2} ( \theta\cdot n(\bar{x}) )\; \mathrm{d}\theta\; \mathrm{d}\bar{x}\; \mathrm{d}\tau \leq \tfrac{1}{2}\int_{\R^{d}_{+}}\int_{\mathbb{S}^{d-1}} \| u(t^{\prime})\|^{2}_{L^{2}_{\theta}}\;\mathrm{d}x\\
&\qquad\qquad +D_1\int^{t}_{t^{\prime}}\int_{\R^{d}_{+}} \| u(\tau) \|^{2}_{L^{2}_{\theta}} \; \mathrm{d}x\; \mathrm{d}\tau  + \tfrac{1}{2}\int^{t}_{t^{\prime}}\int_{\Gamma^{-}} g^{2}\big| \theta\cdot n(\bar{x}) \big|\; \mathrm{d}\theta\; \mathrm{d}\bar{x}\; \mathrm{d}\tau\,.
\end{split}
\end{equation}
Here $D_0$ depends on $d, s$ and $b(1)$, while $D_1$ depends on $d, s, b(1)$ and the integrable scattering kernel $h(\cdot)$.  Estimate \eqref{eq2.4} shows the diffusion nature of the equation in the scattering variable $\theta$.  We will complete, using the Proposition \ref{cor1} below, such estimate to include the spatial diffusion nature of the model as well.  
\section{Averaging lemma in the half-space}
In this section, we give  a regularization mechanism in the RTE. It is related to the fact that the diffusion in the kinetic variable $\theta$ is propagated to the spatial variable by means of the advection operator $\theta\cdot\nabla_{x}$.  We follow the framework developed in \cite{bouchut,alonso} and adapt it to the fact that we are considering half-space on the spatial variable.

\smallskip
\noindent
In the sequel, the Fourier transform in time and spatial variables for a suitable function $\varphi(t,x)$ defined in $(0,\infty)\times\mathbb{R}^{d}_{+}$ is given by
\begin{equation}\label{FT}
\widehat{\varphi}(w,k) = \mathcal{F}_{t,x}\{\varphi\}(w,k) = \int^{\infty}_{0}\int_{\mathbb{R}^{d}_{+}} \varphi(t,x)\, \text{e}^{-\text{i} w\,t} \text{e}^{-\text{i} k\cdot x}\text{d}x\,\text{d}t\,, \quad (w,k)\in\R\times\R^{d}\,.
\end{equation}
The partial Fourier transforms in time $\mathcal{F}_{t}\{\cdot\}$ and space $\mathcal{F}_{x}\{\cdot\}$ for $\varphi(t,x)$ are defined in obvious manner.  The fractional differentiation in the spatial variable for a sufficiently smooth function $\varphi(x)$ with domain in $\R^{d}_{+}$ is defined through its Fourier transform
\begin{equation}\label{SpatialDiff}
\mathcal{F}_{x}\big\{ (-\Delta_{x})^{\frac{s}{s}}\varphi \big\}(k) = |k|^{s}\int_{\mathbb{R}^{d}_{+}} \varphi(x)\, \text{e}^{-\text{i} k\cdot x}\text{d}x\,,\qquad k\in\R^{d}\,,\quad s>0\,.
\end{equation}
Note that this definition agrees with the classical one using the extension of $\varphi$ by zero in $\{x\,|\, x_{d}\leq 0\}$.  As a consequence, $(-\Delta_{x})^{\frac{s}{s}}\varphi$ is a tempered distribution defined in $\R^{d}$ and have the information of the trace of $\varphi$ on $\partial\mathbb{R}^{+}_{+}$ encoded.
\begin{thm}\label{thm1}
Fix $d\geq 3$ and boundary $\theta_{d}\, g\in L^{2}((t_0,t_1)\times\Gamma^{-})$.  Assume that $u\in \mathcal{C}([t_0,t_1);L^{2}({\mathbb{R}^{d}_{+}\times\mathbb{S}^{d-1}})$ solves the RTE on the half-space (\ref{eq:1}) for  $t\in(t_0,t_1)$. Then, for any $s \in (0, 1)$, there exists a constant $C \defi C(d, s)$ such that 
\begin{equation}\label{eqthm}
\begin{aligned}
\Vert (-\Delta_x)^{s_0/2} u \Vert_{L^{2}((t_0,t_1)\times \R^{d}\times\mathbb{S}^{d-1})} \leq C \left(\Vert u(t_0) \Vert_{L^{2}( \R^{d}_+\times\mathbb{S}^{d-1})} +\Vert u \Vert_{L^{2}((t_0,t_1)\times \R^{d}_+\times\mathbb{S}^{d-1})} \right.  \\
\left.+\Vert (-\Delta_v)^{s}U_{\mathcal{J}} \Vert_{L^{2}((t_0,t_1)\times \R^{d}_+\times\mathbb{R}^{d-1})} +\norm{\theta_{d}\,g}_{L^{2}((t_0,t_1)\times\Gamma^{-})}\right), \quad s_0=\frac{s/8}{2s+1}.
\end{aligned}
\end{equation}
\end{thm}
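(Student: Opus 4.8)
The strategy is the classical Fourier-analytic velocity-averaging argument of Golse–Lions–Perthame–Sentis, adapted to the half-space via an explicit extension of the solution across $\{x_d = 0\}$, following the whole-space treatment in \cite{bouchut,alonso}. The key obstacle specific to the half-space is that the transport operator $\partial_t + \theta\cdot\nabla_x$ does not commute with restriction/extension, so one must carefully track the boundary contributions produced when testing the equation against exponentials.

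\textbf{Step 1: Extension and the equation it solves.} First I would extend $u$ to $\tilde u$ defined on $(t_0,t_1)\times\R^d\times\mathbb{S}^{d-1}$, taking $\tilde u = u$ for $x_d>0$ and $\tilde u = 0$ for $x_d<0$. Computing $(\partial_t + \theta\cdot\nabla_x)\tilde u$ in $\mathcal D'(\R^d)$ produces the interior source $\widetilde{\mathcal I(u)}$ plus a surface term supported on $\{x_d=0\}$, of the form $\theta_d\, u|_{x_d=0}\,\delta_{x_d=0}$. On $\Gamma^-$ (incoming, $\theta_d>0$) this trace is the prescribed datum $g$; on $\Gamma^+$ (outgoing) it is the outgoing trace of $u$, which by the energy estimate \eqref{eq2.3-1} is controlled in $L^2(\Sigma^T_+)$ by the data appearing on the right of \eqref{eqthm}. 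Similarly the initial slice $t=t_0$ contributes $u(t_0)\,\delta_{t=t_0}$ after extending in time by zero to $t<t_0$. So $\tilde u$ solves a transport equation on the full space with right-hand side $S = \widetilde{\mathcal I(u)} + (\text{boundary measure}) + (\text{initial measure})$.

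\textbf{Step 2: Fourier symbol and the averaging inequality.} Taking $\mathcal F_{t,x}$ and using \eqref{SpatialDiff}, the transported equation becomes $(\mathrm i w + \mathrm i k\cdot\theta)\,\widehat{\tilde u}(w,k,\theta) = \widehat S(w,k,\theta)$. The diffusion in $\theta$ enters through \eqref{2.12}: for each fixed $(w,k)$ the function $\theta\mapsto \widehat{\tilde u}$ has its $H^s_\theta$-seminorm estimated via $-\int \mathcal I_{b(1)}(u)\bar u\,\mathrm d\theta$, and the RTE lets us write $\mathcal I(u)$ in terms of $(-\Delta_v)^s U_{\mathcal J}$, $u$ itself, and $\mathcal I_h(u)$ (the $L^2_\theta$-bounded piece), all of which appear on the right of \eqref{eqthm}. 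The heart of the matter is the one-dimensional integral estimate: for a function $f(\theta)$ on $\mathbb S^{d-1}$ with $(\mathrm i w+\mathrm i k\cdot\theta)f = r(\theta)$, one bounds $\int_{\mathbb S^{d-1}}|f|^2\,\mathrm d\theta$ by splitting the sphere into the region where $|w+k\cdot\theta|$ is large (there $|f|\le |r|/|w+k\cdot\theta|$ directly) and the sliver where it is small. On the small region one uses the $H^s_\theta$-regularity of $f$: the measure of $\{\theta : |w+k\cdot\theta|<\delta\}$ is $O(\delta/|k|)$, and combining the Sobolev embedding \eqref{2.14} (or a direct interpolation between $L^2_\theta$ and $H^s_\theta$) with Hölder on this thin set yields a gain of a power of $\delta/|k|$. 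Optimizing $\delta$ against $|k|$ and the size of $\langle w,k\rangle$ produces a factor $|k|^{-2s_0}$ with $s_0$ as stated; the somewhat lossy exponent $s_0 = \frac{s/8}{2s+1}$ reflects the crude Hölder splitting rather than a sharp stationary-phase analysis.

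\textbf{Step 3: Reassembly.} Multiplying the resulting bound $|k|^{2s_0}\int_{\mathbb S^{d-1}}|\widehat{\tilde u}(w,k,\theta)|^2\mathrm d\theta \lesssim (\text{Fourier-side norms of }S\text{ and }\nabla^s_v U_{\mathcal J})$ by nothing further, integrating in $(w,k)$, and invoking Plancherel converts the left side into $\Vert(-\Delta_x)^{s_0/2}\tilde u\Vert_{L^2((t_0,t_1)\times\R^d\times\mathbb S^{d-1})}^2$, which dominates the half-space quantity on the left of \eqref{eqthm}. On the right, the interior source $\widetilde{\mathcal I(u)}$ is handled by \eqref{2.5}: its singular part contributes exactly $\Vert(-\Delta_v)^s U_{\mathcal J}\Vert_{L^2}$, its bounded parts contribute $\Vert u\Vert_{L^2}$; the boundary measure contributes $\Vert\theta_d g\Vert_{L^2(\Sigma^-)}$ together with the outgoing trace, which \eqref{eq2.3-1} reabsorbs into the data; and the initial measure contributes $\Vert u(t_0)\Vert_{L^2}$. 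Collecting terms gives \eqref{eqthm}.

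\textbf{Main obstacle.} The delicate point is Step 2 on the thin resonant set, and in particular making the extension-by-zero argument rigorous: one must verify that the surface distribution $\theta_d\,u|_{x_d=0}\delta_{x_d=0}$ is genuinely in the dual space where the Fourier multiplier estimate applies, i.e.\ that the outgoing trace of $u$ lies in $L^2(\Sigma^T_+; |\theta_d|\,\mathrm d\bar x\,\mathrm d\theta\,\mathrm dt)$ with the correct weight — this is exactly what \eqref{eq2.3-1} provides, but one must check the weight $|\theta_d|$ matches the $\theta_d$ multiplying the trace so that no extra angular singularity is introduced near $\theta_d=0$. Once the trace is correctly placed in $L^2$ with the $|\theta_d|\,\mathrm d\bar x\,\mathrm d\theta\,\mathrm dt$ measure, it plays the same role as an ordinary $L^2_{t,x,\theta}$ source in the averaging lemma and the rest is the standard harmonic-analysis bookkeeping.
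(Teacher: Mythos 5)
Your plan is in the same Fourier--averaging family as the paper's proof, but two of your choices hide genuine gaps. First, the boundary. You extend $u$ by zero and keep the full surface term $\theta_{d}\,u|_{x_{d}=0}\,\delta_{x_{d}=0}$, proposing to control its outgoing part ($\theta_{d}<0$) through the energy estimate \eqref{eq2.3-1}. But \eqref{eq2.3-1} bounds $\int_{\Gamma^{+}}u^{2}\,|\theta_{d}|$ by $\int_{\Gamma^{-}}g^{2}\,|\theta_{d}|$, i.e.\ by $\|\sqrt{|\theta_{d}|}\,g\|^{2}_{L^{2}}$, which is strictly stronger than the norm $\norm{\theta_{d}\,g}_{L^{2}((t_0,t_1)\times\Gamma^{-})}$ appearing on the right of \eqref{eqthm}; your route therefore yields the estimate with different (stronger) data than the theorem asserts. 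The paper never needs the outgoing trace: in its half-space Fourier computation the boundary enters only through $G$, defined as $\theta_{d}\,g$ on $\{\theta_{d}>0\}$ and zero on $\{\theta_{d}<0\}$, the outgoing contribution being discarded by a causality/characteristics argument (the value of $u$ on $\Gamma^{+}$ is determined by $\theta\cdot\nabla_{x}u$ in the interior).

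Second, and more seriously, your closing claim that once the trace lies in the weighted $L^{2}$ space it ``plays the same role as an ordinary $L^{2}_{t,x,\theta}$ source'' is not correct, and this is precisely where the half-space proof is decided. The boundary source is a measure concentrated on $\{x_{d}=0\}$, so $\widehat{G}(w,\bar{k},\theta)$ has no decay in $k_{d}$ and is not square integrable in $k$; fed naively into the averaging inequality and Plancherel, its contribution to $\int |k|^{2s_{0}}|\widehat{u}|^{2}$ diverges in the $k_{d}$-integration. In the paper this is handled by exploiting the resolvent factor $1/\lambda=|k|^{-1/2}$ attached to the boundary term (estimate of $T_{4}$, and \eqref{eq37}, where $\widehat{G}$ carries the additional factor $|k|^{-s_{0}-1/2}$), and then splitting $\{|k|\geq1\}$ into $\{|k_{d}|\leq1\}$ and $\{|k_{d}|\geq1\}$ so that $\int_{|k_{d}|\geq1}|k_{d}|^{-2s_{0}-1}\,\mathrm{d}k_{d}<\infty$; an analogous care is needed in $w$ for the initial-time delta. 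Without tracking this extra gain your Step 3 does not close for the boundary contribution. Your Step 2 (a GLPS-type resonant-set splitting instead of the paper's angular mollification plus resolvent decomposition $T_{1},\dots,T_{4}$) is a legitimate alternative in principle, but the exponent $s_{0}=\frac{s/8}{2s+1}$ is asserted rather than derived, and the derivation is exactly the place where the interplay between the mollification scale, the interpolation parameter $\lambda$, and the measure-type sources must be quantified.
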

\begin{proof}
Start with an approximation of the identity in the sphere $\{\rho_{\epsilon}\}_{\epsilon>0}$ defined through an smooth function   $\rho \in \mathcal{C}(-1,1)$ satisfying the properties
\begin{equation}\label{eq.3}
\int^{1}_{-1}  \rho(z)z^{\frac{d-3}{2}} \;\mathrm{d}z=1, \qquad 0<\rho(z)\lesssim \frac{1}{z^{\frac{d-1}{2}+s}}.
\end{equation}
Introduce the quantity
\begin{equation}\label{eq4}
C_\epsilon=\vert \mathbb{S}^{d-2}\vert \int^{1}_{-1}  \rho(z)z^{\frac{d-3}{2}}(2-\epsilon z)^{\frac{d-3}{2}} \;\mathrm{d}z, \qquad \epsilon \in \left( 0,1\right].
\end{equation}
and note that $\inf_{\epsilon\in(0,1]} C_{\epsilon}>0$. Thus, define the approximation of the identity as
\begin{equation}\label{eq5}
\rho_{\epsilon}(z)=\dfrac{1}{C_{\epsilon}\epsilon^{\frac{d-1}{2}}}\rho \left(\dfrac{z}{\epsilon}\right).
\end{equation}
We can see that
\begin{equation}\label{eq6}
\int_{\mathbb{S}^{d-1}}  \rho_{\epsilon}(1-\theta\cdot \theta ^{\prime}) \;\mathrm{d}\theta^{\prime}=1, \qquad \epsilon>0.
\end{equation}
We understand the convolution in the sphere, for any real function $\psi$ defined on the sphere, as
\begin{equation}\label{eq7}
(\rho\star \psi)(\theta)=\int_{\mathbb{S}^{d-1}}  \rho(1-\theta\cdot \theta ^{\prime})\psi(\theta^{\prime}) \;\mathrm{d}\theta^{\prime}.
\end{equation}
Now, consider a sufficiently smooth solution $u$ of the RTE on the half-space, in the interval $[t_0, t_1]$ for any $0 < t_0 <
t_1 < \infty$.  The Fourier transform of $\partial_t u$ can be computed as
\begin{equation*}
\reallywidehat{\partial_t u}(w,k,\theta)= - \mathrm{e}^{-\mathrm{i}wt_0}\mathcal{F}_{x}\{u\}(t_0,k,\theta) + \text{i}w\,\widehat{u}(w,k,\theta)\,,
\end{equation*}
where the boundary component at $t_1$ is disregarded by the causality of the equation.  In the same spirit we can compute the Fourier transform of $\theta\cdot \nabla_x  u$.  To understand the spectral transformation one considers the problem
\begin{equation*}
\Bigg\{
\begin{array}{cl}
\theta\cdot\nabla_{x}u = f & \text{in}\;\; \mathbb{R}^{d}_{+}\times\mathbb{S}^{d-1}\,, \\
u = g   & \text{on} \;\;\Gamma^{-}\,.
\end{array}
\end{equation*}
The characteristics $x+t\theta$ imply that for $\theta_{d}>0$, both the boundary $g$ and the interior values $f=\theta\cdot\nabla_{x}u$ contribute to $u$, while for $\{\theta_{d}<0\}$ only $f$ contributes to $u$.  In particular, the value of $u$ at $\Gamma^{+}$ is fully determined by the knowledge of $\theta\cdot\nabla_{x}u$ in $\mathbb{R}^{d}_{+}\times\mathbb{S}^{d-1}$, see \cite{Lax-Phillips}.

\smallskip
\noindent
Keeping this in mind, set $x=(\bar{x},x_d)$ with $\bar{x}=(x_1,x_2,\cdots\cdots,x_{d-1})\in \R^{d-1}$ and $x_{d}>0$.  In our coordinate system we will consider $\R^{d}_{+}=\{x\, |\, x_{d}>0\}$.  For the spatial Fourier variable we perform a similar decomposition  $k=(\bar{k},k_{d})$ with $\bar{k}\in\mathbb{R}^{d-1}$ and $k_{d}\in\mathbb{R}$.  Then 
\begin{equation*}
\begin{split}
\MoveEqLeft[2.5]
\mathcal{F}_{x}\big\{ \theta\cdot \nabla_x  u \big\}(t,k,\theta) = \int_{\mathbb{R}^{d}_{+}} \theta\cdot \nabla_x  u\;\ \mathrm{e}^{-\mathrm{i}k\cdot x} \,\mathrm{d}x \\
& = \int_{0}^{+\infty}   \int_{\mathbb{R}^{d-1}}\big( \bar{\theta}\cdot \nabla_{\bar{x}}u +\theta_d\cdot \partial_{x_d} u\big)\;\ \mathrm{e}^{-\mathrm{i}\bar{x}\cdot \bar{k}} \,\mathrm{d}\bar{x}\;\ \mathrm{e}^{-\mathrm{i}x_d\, {k}_d} \,\mathrm{d}{x}_d\\
& =  \int_{0}^{+\infty} \Big(  \int_{\mathbb{R}^{d-1}}\bar{\theta}\cdot \nabla_{\bar{x}}u\;\ \mathrm{e}^{-\mathrm{i}\bar{x}\cdot \bar{k}}\, \text{d}\bar{x} +  \int_{\mathbb{R}^{d-1}}\theta_d\cdot \partial_{x_d} u\;\ \mathrm{e}^{-\mathrm{i}\bar{x}\cdot \bar{k}} \,\mathrm{d}\bar{x} \, \Big) \mathrm{e}^{-\mathrm{i}x_d\, {k}_d} \,\mathrm{d}{x}_d \\
&=\int_{0}^{+\infty} \Big( (\mathrm{i}\, \bar{\theta}\cdot \bar{k})\,\mathcal{F}_{\bar{x}}\{u\}(t,\bar{k},x_d,\theta) +\theta_d\cdot \mathcal{F}_{\bar{x}}\{ \partial_{x_d} u\}(t,\bar{k},x_d,\theta)\, \Big) \mathrm{e}^{-\mathrm{i}x_d\, {k}_d} \,\mathrm{d}{x}_d\\
&=(\mathrm{i}\, \bar{\theta}\cdot \bar{k})\mathcal{F}_{x}\{u\}(t,k,\theta) + \theta_d\int_{0}^{+\infty} \partial_{x_d}\mathcal{F}_{\bar{x}}\{u\}(t,\bar{k},x_d,\theta)\;\  \mathrm{e}^{-\mathrm{i}x_d\, {k}_d} \,\mathrm{d}{x}_d\\
&= (\mathrm{i}\, \bar{\theta}\cdot \bar{k})\; \mathcal{F}_{x}\{u\}(t,k,\theta)+\theta_d \bigg[-\mathcal{F}_{\bar{x}}\{u\}(t,\bar{k},0,\theta)+\mathrm{i}k_d\, \mathcal{F}_{x}\{u\}(w,k,\theta) \bigg]\\[4pt]
&= (\mathrm{i}\, \theta \cdot k)\;\mathcal{F}_{x}\{u\}(t,k,\theta) - \mathcal{F}_{\bar{x}}\{G\}(t,\bar{k},\theta),
\end{split}
\end{equation*}
where for the boundary $\{x\,|\,x_{d}=0\}$, we introduced
\begin{equation}\label{G-boundary}
G(t,\bar{x},\theta):=\Bigg\{
\begin{array}{cll}
0 & \text{if} & \theta_{d}<0\,,\\
\theta_{d}\, g(t,\bar{x},\theta) & \text{if} & \theta_{d}>0\,,
\end{array}
\end{equation}
because $u=g$ on $\Sigma^{T}_{-}$.  Meanwhile, $G$ vanishes in $\Sigma^{T}_{+}$ because the term $\mathrm{i}\, \theta \cdot k\,\mathcal{F}_{x}\{u\}$ uniquely defines $u$ at $\Gamma^{+}$.   As a consequence,
\begin{equation*}
\widehat{\theta\cdot \nabla_x  u}(w,k,\theta) = (\mathrm{i}\, \theta \cdot k)\;\widehat{u}(w,k,\theta) - \widehat{G}(w,\bar{k},\theta)\,,
\end{equation*}
where $\widehat{G}=\mathcal{F}_{t,\bar{x}}\{G\}$.  Overall, we conclude that Fourier transform of (\ref{eq:1}) is given by
\begin{equation}\label{eq8}
\mathrm{i}(w + \theta \cdot k)\; \widehat{u}(w, k, \theta) = \mathcal{I}(\widehat{u})(w, k, \theta) +\widehat{u}(t_0, k, \theta)\; \mathrm{e}^{-iwt_0} + \widehat{G}(w,\bar{k},\theta).
 \end{equation}

\noindent  A key step in the proof is to decompose $\widehat{u}$, for any fixed $(w, k)$, as
\begin{equation}\label{eq9}
\widehat{u}(w,k,\theta)=(\rho \star \widehat{u})(w,k,\theta)+\big[\widehat{u}(w,k,\theta)-(\rho \star \widehat{u})(w,k,\theta)\big].
\end{equation}
From (\ref{eq.3})-(\ref{eq8}) and Proposition \ref{prop2.1}, the error can be estimated similarly as in Theorem 3.2 on \cite{alonso} by
\begin{align}\label{3.26}
\begin{split}
\Vert  \widehat{u}(w,k,\cdot) - &(\rho_\epsilon\star \widehat{u})(w,k,\cdot)\Vert^{2}_{L^{2}(\mathbb{S}^{d-1})}\\
     &=  \int_{\mathbb{S}^{d-1}}\abs*{\;\ \int_{\mathbb{S}^{d-1}}\rho_{\epsilon}(1-\theta \cdot \theta^{\prime})\;(\widehat{u}(w,k,\theta)-\widehat{u}(w,k,\theta^{\prime}))\; \mathrm{d}\theta^{\prime}}^{2}\mathrm{d}\theta\\
     &\leq \int_{\mathbb{S }^{d-1}}\; \int_{\mathbb{S}^{d-1}}\rho_{\epsilon}(1-\theta \cdot \theta^{\prime})\; \abs*{\widehat{u}(w,k,\theta)-\widehat{u}(w,k,\theta^{\prime})}^{2}\; \mathrm{d}\theta^{\prime}\; \mathrm{d}\theta\\ 
       &\lesssim \dfrac{\epsilon^{s}}{^{C_{\epsilon}}} \; \int_{\mathbb{S }^{d-1}}\; \int_{\mathbb{S}^{d-1}}\; \dfrac{\abs*{\widehat{u}(w,k,\theta)-\widehat{u}(w,k,\theta^{\prime})}^{2}}{(1-\theta \cdot \theta^{\prime})^{\frac{d-1}{2}+s}}\; \mathrm{d}\theta^{\prime}\; \mathrm{d}\theta\\  
       & =  -\dfrac{2\epsilon^{s}}{^{C_{\epsilon}b(1)}} \int_{\mathbb{S }^{d-1}}\; \mathcal{I}_{b(1)}(\widehat{u})(w,k,\theta)\overline{\widehat{u}(w,k,\theta)}\;\mathrm{d}\theta\\
       &\lesssim  \dfrac{\epsilon^{s}}{^{C_{\epsilon}}} \; \norm*{ (-\Delta_v)^{s/2}\; \reallywidehat{U_{\mathcal{J}}}(w,k,\cdot)  }^{2}_{L^{2}(\mathbb{S}^{d-1})}.
       \end{split}
\end{align} 
Now, we  estimate the term $\rho \star \widehat{u}$ in (\ref{eq9}) for each fixed $(w, k)$.  Using  (\ref{eq8})
\begin{equation}\label{eq11}
\widehat{u}=\dfrac{\lambda \widehat{u}+\mathcal{I}(\widehat{u})+\widehat{u}(t_0,k,\theta)\mathrm{e}^{-\mathrm{i}w t_0} + \widehat{G}(w,\bar{k},\theta)}{\lambda +\mathrm{i}(w+\theta \cdot k)},
\end{equation}
where $\lambda > 0$ is an interpolation parameter depending only on $\abs{k}$ (the parameter $\epsilon$ will depend only on $\abs{k}$ as well). Formulas  (\ref{2.5}) and (\ref{eq11}) lead to
\begin{equation}
\begin{split}
(\rho \star \widehat{u})(w,k,\theta)& = \int_{\mathbb{S}^{d-1}}\rho_{\epsilon}(1-\theta \cdot \theta^{\prime})\dfrac{\lambda \widehat{u}(w,k,\theta^{\prime
})+\mathcal{I}(\widehat{u})(w,k,\theta^{\prime
})+\widehat{u}(t_0,k,\theta^{\prime})\; \mathrm{e}^{-\mathrm{i}wt_0}}{\lambda +\mathrm{i}(w+k\cdot \theta^{\prime
})} \,\mathrm{d}\theta^{\prime}
  \\
 & = \int_{\mathbb{S}^{d-}1}\rho_{\epsilon}(1-\theta \cdot \theta^{\prime})\dfrac{ \widehat{u}(w,k,\theta^{\prime
})+\frac{1}{\lambda}\; \mathcal{K}(\widehat{u})(w,k,\theta^{\prime
})}{1 +\mathrm{i}(w+k\cdot \theta^{\prime
})/\lambda} \,\mathrm{d}\theta^{\prime} 
  \\
 & \quad - \dfrac{D}{\lambda}\int_{\mathbb{S}^{d-}1}\rho_{\epsilon}(1-\theta \cdot \theta^{\prime})\dfrac{(-\Delta_{\theta^{\prime}})^{s}\; \widehat{u}(w,k,\theta^{\prime})}{1 +\mathrm{i}(w+k\cdot \theta^{\prime})/\lambda} \,\mathrm{d}\theta^{\prime}  
 \\
  & \quad + \dfrac{1}{\lambda}\int_{\mathbb{S}^{d-}1}\rho_{\epsilon}(1-\theta \cdot \theta^{\prime})\dfrac{\widehat{u}(t_0,k,\theta)\; \mathrm{e}^{-\mathrm{i}wt_0}}{1 +\mathrm{i}(w+k\cdot \theta^{\prime})/\lambda} \,\mathrm{d}\theta^{\prime}
  \\
    & \quad + \dfrac{1}{\lambda}\int_{\mathbb{S}^{d-}1}\rho_{\epsilon}(1-\theta \cdot \theta^{\prime})\dfrac{\widehat{G}(w,k,\theta)}{1 +\mathrm{i}(w+k\cdot \theta^{\prime})/\lambda} \,\mathrm{d}\theta^{\prime}\triangleq T_1+T_2+T_3+T_4,
  \end{split}
\end{equation}
where $\mathcal{K}\defi c_{s,d}\textbf{1}+\mathcal{I}_{h}$ is the bounded part of $\mathcal{I}$.

\noindent
The terms $T_{i}$ for $i=1,2,3$ have been estimated in \cite{alonso} in formula (3.33) for $T_{1}$, formulas (3.40), (3.41), (3.44) for $T_{2}$ and formula (3.47) for $T_{3}$.  Let us write the overall result and refer to \cite{alonso} for the details.  

\noindent The term $T_1$ is estimated as
\begin{equation}\label{eq16}
\begin{split}
\MoveEqLeft
\norm{T_1(w,k,\cdot)}_{L^{2}(\mathbb{S}^{d-1})}\\
&\leq C\left( \dfrac{1}{\sqrt{\epsilon}}\sqrt{\dfrac{\lambda}{\abs{k}}}\right)^{\frac{1}{2}}\left( \norm{\widehat{u}(w,k,\cdot)}_{L^{2}(\mathbb{S}^{d-1})}+\frac{1}{\lambda}\norm{\mathcal{K}(\widehat{u})(w,k,\cdot)}_{L^{2}(\mathbb{S}^{d-1})}\right).
\end{split}
\end{equation}
The term $T_{2}$ satisfies
\begin{align}\label{eq28}
\begin{split}
\norm*{T_{2}(w,k,\cdot)}_{L^{p}(\mathbb{S}^{d-1})}&\leq \dfrac{D}{\lambda \epsilon}\left(\dfrac{1}{\sqrt{\epsilon}}\sqrt{\dfrac{\lambda}{\abs{k}}}\right)^{\frac{1}{q}}\norm*{(-\Delta_{v})^{s/2}\reallywidehat{U_{\mathcal{J}}}(w,k,\cdot)}_{L^{2}(\R^{d-1})}\\
&\quad+ \dfrac{D\abs{k}}{\lambda^{2}}\left(\dfrac{1}{\sqrt{\epsilon}}\sqrt{\dfrac{\lambda}{\abs{k}}}\right)^{\frac{1}{q}}\norm*{(-\Delta_{v})^{s/2}\reallywidehat{U_{\mathcal{J}}}(w,k,\cdot)}_{L^{2}(\R^{d-1})}\\
&\qquad+ \dfrac{D}{\lambda\sqrt{\epsilon}}\left(\dfrac{1}{\sqrt{\epsilon}}\sqrt{\dfrac{\lambda}{\abs{k}}}\right)^{\frac{2-q}{q}}\norm*{(-\Delta_{v})^{s/2}\reallywidehat{U_{\mathcal{J}}}(w,k,\cdot)}_{L^{2}(\R^{d-1})}.
\end{split}
\end{align}
where $p:=p(s,d)$ and $q:=q(s,d)$, which are convex dual, are given by the formulas
\begin{equation*}
\frac{1}{2}-\frac{1-s}{d-1}=\frac{1}{p}\quad \text{and} \quad \frac{1}{2}+\frac{1-s}{d-1}=\frac{1}{q}\,,\quad d\geq3\,.
\end{equation*}
The term $T_{3}$ is estimates as
\begin{equation}\label{eq31}
\norm*{T_{3}(\cdot,k,\cdot)}_{L^{2}(\R\times \mathbb{S}^{d-1})}\leq \dfrac{C}{\lambda}\left(\dfrac{1}{\sqrt{\epsilon}}\sqrt{\dfrac{\lambda}{\abs{k}}}\right)^{\frac{1}{2}}\norm*{\widehat{u}(t_0,k, \cdot)}_{L^{2}(\mathbb{S}^{d-1})}.
\end{equation} 
Finally, we estimate the boundary term $T_4$ as follows.  Note that 
\begin{equation}\label{eq32}
\begin{split}
\abs{T_4(w,k,\theta)}&=\dfrac{1}{\lambda}\int_{\mathbb{S}^{d-1}}\rho_{\epsilon}(1-\theta\cdot \theta^{\prime})\dfrac{  \abs{\widehat{G}(w,\bar{k},\theta^{\prime})}}{\abs{1+i(w+k\cdot\theta^{\prime})/\lambda}}\mathrm{d}\theta^{\prime}
\\
&\leq \dfrac{1}{\lambda}\left(\int_{\mathbb{S}^{d-1}}\dfrac{\rho_{\epsilon}(1-\theta\cdot \theta^{\prime})}{\abs{1+i(w+k\cdot\theta^{\prime})/\lambda}^{2}}\mathrm{d}\theta^{\prime}\right)^{\frac{1}{2}}
\\
& \qquad \times  \left(\int_{\mathbb{S}^{d-1}}\rho_{\epsilon}(1-\theta\cdot \theta^{\prime})\; \abs{\widehat{G}(w,\bar{k},\theta^{\prime})}^{2}\; \mathrm{d}\theta^{\prime}\right)^{\frac{1}{2}}.
\end{split}
\end{equation}
Estimate \cite[(3.32)]{alonso} reads
\begin{equation}\label{eq33}
\int_{\mathbb{S}^{d-1}}\dfrac{\rho_{\epsilon}(1-\theta\cdot \theta^{\prime})}{\abs{1+i(w+k\cdot\theta^{\prime})/\lambda}^{2}}\mathrm{d}\theta^{\prime} \lesssim\dfrac{1}{C\sqrt{\epsilon}}\sqrt{\dfrac{\lambda}{\abs{k}}}.
\end{equation}
Hence, using estimate (\ref{eq33}) in (\ref{eq32}) and integrating in the variables $w$ and $\theta$ we have that
\begin{equation*}
\begin{split}
&\bigg(\int_{\mathbb{R}}\int_{\mathbb{S}^{d-1}}\; \abs{T_4(w,k,\theta)}^{2}\; \mathrm{d}\theta\; \mathrm{d}w\bigg)^{1/2}\\
&\quad\leq\dfrac{C}{\lambda}\left(\dfrac{1}{\sqrt{\epsilon}}\sqrt{\dfrac{\lambda}{\abs{k}}}\right)^{\frac{1}{2}}\bigg(\int_{\mathbb{R}}\int_{\mathbb{S}^{d-1}}\int_{\mathbb{S}^{d-1}}\rho_{\epsilon}(1-\theta\cdot \theta^{\prime})\; \abs{\widehat{G}(w,\bar{k},\theta^{\prime})}^{2}\; \mathrm{d}\theta^{\prime}\; \mathrm{d}\theta\;\mathrm{d}w\bigg)^{1/2}\\
&\qquad\leq \dfrac{C}{\lambda}\left(\dfrac{1}{\sqrt{\epsilon}}\sqrt{\dfrac{\lambda}{\abs{k}}}\right)^{\frac{1}{2}}\bigg(\int_{\mathbb{R}}\int_{\mathbb{S}^{d-1}}\;\abs{\widehat{G}(w,\bar{k},\theta^{\prime})}^{2}\; \mathrm{d}\theta^{\prime}\;\mathrm{d}w\bigg)^{1/2}.
\end{split}
\end{equation*}
Thus, one obtains that the $L^{2}_{t,\theta}$ norm of $T_{4}$ is estimated by
\begin{equation}\label{eq35}
\norm*{T_{4}(\cdot,k,\cdot)}_{L^{2}(\R\times\mathbb{S}^{d-1})}\leq \dfrac{C}{\lambda}\left(\dfrac{1}{\sqrt{\epsilon}}\sqrt{\dfrac{\lambda}{\abs{k}}}\right)^{\frac{1}{2}}\norm*{\widehat{G}(\cdot,\bar{k}, \cdot)}_{L^{2}(\R\times\mathbb{S}^{d-1})}.
\end{equation} 
\noindent \textit{Conclusion of the proof.} From the decomposition (\ref{eq9}) and the estimates (\ref{eq16}), (\ref{eq28}), (\ref{eq31}) and (\ref{eq35}) one concludes that 
\begin{equation}\label{eq36}
\begin{split}
&\norm*{\widehat{u}(\cdot,k,\cdot)}_{L^{2}(\R\times \mathbb{S}^{d-1})}\leq \bigg(\dfrac{1}{\sqrt{\epsilon}}\sqrt{\dfrac{\lambda}{\abs{k}}}\bigg)^{\frac{1}{2}}\bigg( \norm{\widehat{u}(\cdot,k,\cdot)}_{L^{2}(\R\times \mathbb{S}^{d-1})}+\dfrac{1}{\lambda} \norm{\mathcal{K}(\widehat{u})(\cdot,k,\cdot)}_{L^{2}(\R\times \mathbb{S}^{d-1})}\bigg)\\
&\qquad +\dfrac{C}{\lambda}\bigg(\dfrac{1}{\sqrt{\epsilon}}\sqrt{\dfrac{\lambda}{\abs{k}}}\bigg)^{\frac{1}{2}}\norm*{\widehat{u}(t_0,k,\cdot)}_{L^{2}( \mathbb{S}^{d-1})}+D\Bigg( \dfrac{1}{\lambda \epsilon}\bigg(\dfrac{1}{\sqrt{\epsilon}}\sqrt{\dfrac{\lambda}{\abs{k}}}\bigg)^{\frac{1}{q}}+\dfrac{\abs{k}}{\lambda^{2}} \bigg(\dfrac{1}{\sqrt{\epsilon}}\sqrt{\dfrac{\lambda}{\abs{k}}}\bigg)^{\frac{1}{q}}\\
& \qquad\qquad + \dfrac{1}{\lambda \epsilon}\bigg(\dfrac{1}{\sqrt{\epsilon}}\sqrt{\dfrac{\lambda}{\abs{k}}}\bigg)^{\frac{2-q}{q}}+\; \epsilon^{\frac{s}{2}}\Bigg)\norm*{(-\Delta_{v})^{s/2}\; \reallywidehat{U_{\mathcal{J}}}(\cdot,k,\cdot)}_{L^{2}(\R\times \R^{d-1})}\\ 
&\qquad\qquad\qquad +\dfrac{C}{\lambda}\left(\dfrac{1}{\sqrt{\epsilon}}\sqrt{\dfrac{\lambda}{\abs{k}}}\right)^{\frac{1}{2}}\norm*{\widehat{G}(\cdot,\bar{k}, \cdot)}_{L^{2}(\R\times \mathbb{S}^{d-1})}.
\end{split}
\end{equation}
Keep in mind that we are seeking an estimate for large frequencies in the spatial Fourier variable $k\in\R^{d}$.  Set $\epsilon=\abs{k}^{-a}$ and $\lambda=\abs{k}^{b}$ with numbers $a, b>0$ to be chosen in the sequel. Since we hope that
$$\dfrac{1}{\sqrt{\epsilon}}\sqrt{\dfrac{\lambda}{\abs{k}}}\sim \dfrac{1}{\abs{k}^{s_0}},\quad \text{for some}\quad s_0>0,$$
we can control the term
$$\dfrac{\abs{k}}{\lambda^{2}}\left(\dfrac{1}{\sqrt{\epsilon}}\sqrt{\dfrac{\lambda}{\abs{k}}}\right)^{\frac{1}{q}}\quad \text{by choosing } \quad \dfrac{\abs{k}}{\lambda^{2}}=1,$$
that is, choosing $b=1/2$.  Recalling that $q\in (1,2)$ one conclude that the leading terms are
$$\left(\dfrac{1}{\sqrt{\epsilon}}\sqrt{\dfrac{\lambda}{\abs{k}}}\right)^{\frac{1}{2}},\quad \dfrac{1}{\lambda \sqrt{\epsilon}}\left(\dfrac{1}{\sqrt{\epsilon}}\sqrt{\dfrac{\lambda}{\abs{k}}}\right)^{\frac{2-q}{q}}\quad \text{and} \quad \epsilon^{\frac{s}{2}}.$$
The best option independent of the dimension is choosing $a$ such that 
$$\max \bigg\{ \left(\dfrac{1}{\sqrt{\epsilon}}\sqrt{\dfrac{\lambda}{\abs{k}}}\right)^{\frac{1}{2}},\dfrac{1}{\lambda\sqrt{\epsilon}}\bigg\}=\epsilon^{\frac{s}{2}}.$$
In fact, 
$$\left(\dfrac{1}{\sqrt{\epsilon}}\sqrt{\dfrac{\lambda}{\abs{k}}}\right)^{\frac{1}{2}}=\left(\abs{k}^{a/2}\sqrt{\abs{k}^{-1/2}}\right)^{1/2}=\dfrac{1}{|k|^{\frac{1-2a}{8}}},$$
thus, the best option reduces to find $a$ such that
\begin{equation*}
\max \bigg\{ \left(\dfrac{1}{\sqrt{\epsilon}}\sqrt{\dfrac{\lambda}{\abs{k}}}\right)^{\frac{1}{2}},\dfrac{1}{\lambda\sqrt{\epsilon}}\bigg\} = \max \bigg\{ \dfrac{1}{|k|^{\frac{1-2a}{8}}},\dfrac{1}{|k|^{\frac{1-a}{2}}} \bigg\} = \frac{1}{|k|^{\frac{as}{2}}}.
\end{equation*}
The first term in the maximum, being the larger for $|k|\geq1$, is the constraint.  As a consequence, we must have $\frac{1-2a}{8} = \frac{as}{2}$, or, $a=\frac{1/2}{2s+1}$.
\noindent Computing from (\ref{eq36}), one concludes that for $|k|\geq1$,
\begin{equation}\label{eq37}
\begin{split}
&\norm*{\widehat{u}(\cdot,k,\cdot)}_{L^{2}(\R\times \mathbb{S}^{d-1})}\leq \dfrac{C}{\abs{k}^{\frac{as}{4}}}\bigg( \norm{\widehat{u}(\cdot,k,\cdot)}_{L^{2}(\R\times \mathbb{S}^{d-1})}+ \norm{\mathcal{K}(\widehat{u})(\cdot,k,\cdot)}_{L^{2}(\R\times \mathbb{S}^{d-1})}\\
&\quad +  \norm*{(-\Delta_{v})^{s/2}\; \reallywidehat{U_{\mathcal{J}}}(\cdot,k,\cdot)}_{L^{2}(\R\times \R^{d-1})} +\norm{\widehat{u}(t_0,k,\cdot)}_{L^{2}(\mathbb{S}^{d-1})}+\dfrac{\norm{\widehat{G}(\cdot,\bar{k},\cdot)}_{L^{2}(\R\times\mathbb{S}^{d-1})}}{ |k|^{\frac{as}{4}+\frac{1}{2}}}\bigg).
\end{split}
\end{equation}
Take $s_0=\frac{as}{4}$ and compute using Plancherel theorem
\begin{align}\label{eq38-final}
\begin{split}
&\int^{t_{1}}_{t_0}\int_{\mathbb{R}^{d}}\int_{\mathbb{S}^{d-1}}\big| (-\Delta_{x})^{s_0/2}u \big|^{2}\text{d}\theta\text{d}x\text{d}t = \int_{\R}\int_{\mathbb{R}^{d}}\int_{\mathbb{S}^{d-1}}\big| \widehat{u} \big|^{2}|k|^{2s_0}\text{d}\theta\text{d}k\text{d}w\\
&\qquad=\bigg(\int_{|k|\leq1} + \int_{|k|\geq1}\bigg)\int_{\R}\int_{\mathbb{S}^{d-1}}\big| \widehat{u} \big|^{2}|k|^{2s_0}\text{d}k\text{d}\theta\text{d}w\\
&\qquad\leq \|u\|^{2}_{L^{2}( (t_0,t_{1})\times\R^{d}_{+}\times\mathbb{S}^{d-1})} + \int_{|k|\geq1}\int_{\R}\int_{\mathbb{S}^{d-1}}\big| \widehat{u} \big|^{2}|k|^{2s_0}\text{d}\theta\text{d}w\text{d}k\,.
\end{split}
\end{align}
We apply estimate \eqref{eq37} in the second term of right noticing that denoting $A_{1}=\{|k|\geq1,|k_{d}|\leq1\}$ and $A_{2}=\{|k|\geq1,|k_{d}|\geq1\}$ it follows that
\begin{equation*}
\int_{|k|\geq1}\dfrac{\norm{\widehat{G}(\cdot,\bar{k},\cdot)}^{2}_{L^{2}(\R\times\mathbb{S}^{d-1})}}{|k|^{2s_0+1}}\mathrm{d}k=\bigg(\int_{A_{1}}+\int_{A_{2}}\bigg)\dfrac{\norm{\widehat{G}(\cdot,\bar{k},\cdot)}^{2}_{L^{2}(\R\times\mathbb{S}^{d-1})}}{|k|^{2s_0+1}}\;\mathrm{d}k\,,
\end{equation*}
where, by Plancherel theorem, one has that
\begin{align*}
\int_{A_{1}}\dfrac{\norm{\widehat{G}(\cdot,\bar{k},\cdot)}^{2}_{L^{2}(\R\times\mathbb{S}^{d-1})}}{|k|^{2s_0+1}}\;\mathrm{d}k &\leq \int_{\R^{d-1}}\norm{\widehat{G}(\cdot,\bar{k},\cdot)}^{2}_{L^{2}(\R\times\mathbb{S}^{d-1})}\;\mathrm{d}\bar{k}\;\left(\int_{\abs{k_d}\leq 1}\mathrm{d}k_d\right)\\
&\leq 2\,\|G\|^{2}_{L^{2}( (t_0,t_{1})\times\partial\R^{d}_{+}\times\mathbb{S}^{d-1})}\,.
\end{align*}
Similarly,
\begin{equation*}
\begin{split}
\int_{A_{2}}\dfrac{\norm{\widehat{G}(\cdot,\bar{k},\cdot)}^{2}_{L^{2}(\R\times\mathbb{S}^{d-1})}}{|k|^{2s_0+1}}\;\mathrm{d}k & \leq \int_{|k_{d}|\geq1}\dfrac{\norm{\widehat{G}(\cdot,\bar{k},\cdot)}^{2}_{L^{2}(\R\times\mathbb{S}^{d-1})}}{|k_{d}|^{2s_0+1}}\;\mathrm{d}k
\\
& \leq  \int_{\R^{d-1}}\norm{\widehat{G}(\cdot,\bar{k},\cdot)}^{2}_{L^{2}(\R\times\mathbb{S}^{d-1})}\;\mathrm{d}\bar{k}\;\left(\int_{\abs{k_d}\geq 1}\dfrac{\mathrm{d}k_d}{ |k_d|^{2s_0+1}}\right)
\\
&= \frac{1}{2s_0}\,\|G\|^{2}_{L^{2}( (t_0,t_{1})\times\partial\R^{d}_{+}\times\mathbb{S}^{d-1})} = \frac{1}{2s_0}\,\|\theta_{d}\,g\|^{2}_{L^{2}((t_0,t_{1})\times\Gamma^{-})}\,.
\end{split}
\end{equation*}
As a consequence, \eqref{eqthm} follows from \eqref{eq38-final} and \eqref{eq37} and the result is stablished.
\end{proof}
\begin{cor}\label{cor3.3}
Let $u$ be a solution to (\ref{eq:1}) which satisfies the conditions in Theorem \ref{thm1}. Then, for any $t_*\in (t_0,t_1)$, we have
\begin{equation}\label{eq38}
\begin{split}
\MoveEqLeft
\norm*{(-\Delta_x)^{s_0/2} \;u }_{L^{2}([t_*,t_1)\times \R^{d}\times\mathbb{S}^{d-1})}\leq C \left(\dfrac{1}{\sqrt{t_* - t_0}}+1 \right)\norm*{u}_{L^{2}((t_0,t_1)\times \R^{d}_+\times\mathbb{S}^{d-1})}
\\
&\qquad\qquad +C \left(\norm*{(-\Delta_v)^{s/2} \;U_{\mathcal{J}} }_{L^{2}((t_0,t_1)\times \R^{d}_+\times\mathbb{R}^{d-1})}+\norm*{\theta_{d}\,g}_{L^{2}((t_0,t_1)\times\Gamma^{-})} \right).
\end{split}
\end{equation}
\end{cor}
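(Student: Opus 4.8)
The plan is to obtain \eqref{eq38} from Theorem \ref{thm1} by the classical device of averaging the estimate of Theorem \ref{thm1} over the location of the ``initial'' time slice. The point is that \eqref{eqthm} carries a term $\norm*{u(t_0)}_{L^{2}(\R^{d}_{+}\times\mathbb{S}^{d-1})}$ on the right; to trade it for the space--time norm $\norm*{u}_{L^{2}((t_0,t_1)\times\R^{d}_{+}\times\mathbb{S}^{d-1})}$, at the cost of the factor $(t_*-t_0)^{-1/2}$, one applies the theorem starting from a variable time $\tau$ and then integrates in $\tau$.

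First I would note that for every $\tau\in(t_0,t_*)$ the restriction of $u$ to the window $(\tau,t_1)$ still meets the hypotheses of Theorem \ref{thm1}: it lies in $\mathcal{C}([\tau,t_1);L^{2}(\R^{d}_{+}\times\mathbb{S}^{d-1}))$, solves the RTE on $(\tau,t_1)$, and has boundary datum $\theta_d g\in L^{2}((\tau,t_1)\times\Gamma^{-})$; moreover the constant $C=C(d,s)$ in \eqref{eqthm} does not see the endpoints. Applying \eqref{eqthm} on $(\tau,t_1)$ and using $[t_*,t_1)\subset[\tau,t_1)$ to drop the sub-$t_*$ part of the time integral on the left gives a bound for $\norm*{(-\Delta_x)^{s_0/2}u}_{L^{2}([t_*,t_1)\times\R^{d}\times\mathbb{S}^{d-1})}$ whose right-hand side is $C$ times $\norm*{u(\tau)}_{L^{2}(\R^{d}_{+}\times\mathbb{S}^{d-1})}$ plus the three ``tail'' quantities $\norm*{u}_{L^{2}((\tau,t_1)\times\R^{d}_{+}\times\mathbb{S}^{d-1})}$, $\norm*{(-\Delta_v)^{s}U_{\mathcal{J}}}_{L^{2}((\tau,t_1)\times\R^{d}_{+}\times\R^{d-1})}$ and $\norm*{\theta_d g}_{L^{2}((\tau,t_1)\times\Gamma^{-})}$, each of which is at most its analogue over $(t_0,t_1)$ by monotonicity in the integration domain.

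Next I would square this inequality and integrate in $\tau$ over $(t_0,t_*)$. The left-hand side is independent of $\tau$, so it gains a factor $t_*-t_0$; on the right the three tail terms, being $\tau$-uniformly bounded, contribute $(t_*-t_0)$ times their $(t_0,t_1)$-values, while the genuinely $\tau$-dependent term satisfies $\int_{t_0}^{t_*}\norm*{u(\tau)}^{2}_{L^{2}(\R^{d}_{+}\times\mathbb{S}^{d-1})}\,\mathrm{d}\tau=\norm*{u}^{2}_{L^{2}((t_0,t_*)\times\R^{d}_{+}\times\mathbb{S}^{d-1})}\le\norm*{u}^{2}_{L^{2}((t_0,t_1)\times\R^{d}_{+}\times\mathbb{S}^{d-1})}$. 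Dividing through by $t_*-t_0$, taking square roots, and using $\sqrt{a+b}\le\sqrt a+\sqrt b$ together with $1\le (t_*-t_0)^{-1/2}+1$ yields exactly \eqref{eq38}.

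I do not expect a real obstacle in this argument; it is a short and robust time-translation/averaging trick. The only points that need a word of care are: (i) that Theorem \ref{thm1} is applicable on each subinterval $(\tau,t_1)$ with one and the same constant, which is clear because that constant depends only on $d$ and $s$; (ii) that $u(\tau)$ is a bona fide element of $L^{2}(\R^{d}_{+}\times\mathbb{S}^{d-1})$ for every $\tau$, which is exactly the assumed time-continuity; and (iii) the elementary monotonicity that lets one replace integrals over $(\tau,t_1)$ by integrals over $(t_0,t_1)$ uniformly in $\tau$.
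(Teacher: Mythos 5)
Your argument is exactly the paper's proof: apply Theorem \ref{thm1} on the window $(\tau,t_1)$ for each $\tau\in(t_0,t_*)$, enlarge the tail integrals to $(t_0,t_1)$, and average over $\tau\in(t_0,t_*)$ to convert $\norm{u(\tau)}_{L^{2}(\R^{d}_{+}\times\mathbb{S}^{d-1})}$ into the space--time norm with the factor $(t_*-t_0)^{-1/2}$. Correct, and no further comment is needed.
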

\begin{proof}
Let $\tau\in (t_0,t_1)$ be arbitrary.  Then estimate (\ref{eqthm}) gives that
\begin{equation*}
\begin{aligned}
\Vert (-\Delta_x)^{s_0/2} u \Vert_{L^{2}((\tau,t_1)\times \R^{d}\times\mathbb{S}^{d-1})} &\leq C \bigg(\Vert u(\tau) \Vert_{L^{2}( \R^{d}_+\times\mathbb{S}^{d-1})} +\Vert u \Vert_{L^{2}((t_0,t_1)\times \R^{d}_+\times\mathbb{S}^{d-1})}\\
&\hspace{-1cm} + \Vert (-\Delta_v)^{s}U_{\mathcal{J}} \Vert_{L^{2}((t_0,t_1)\times \R^{d}_+\times\mathbb{R}^{d-1})}+ \Vert \theta_{d}\,g \Vert_{L^{2}((t_0,t_1)\times\Gamma^{-})}\bigg) \,.
\end{aligned}
\end{equation*}
The result follows after taking the time average over $ \tau\in (t_0,t_*)$.
\end{proof}
\section{A priory estimates and regularity}
In this section we implement a classical program in the theory of hypo-elliptic equations that consists in proving a successive gain of regularity.  The interplay of the kinetic variable $\theta$ and the spatial variable was quantified in Theorem \ref{thm1} in previous section.  It will play an essential role here.  Of course, it is understood that solutions are assumed to be sufficiently regular so that all computations are valid.           
\subsection{Regularity estimates up to the boundary.} 
The mass equation \eqref{eq-mass} with $t'=0$ gives that
\begin{equation}\label{mass}
m(t) + \Phi_{out}(t) =  m(0) + \Phi_{in}(t),
\end{equation}
where, for $t\geq0$
\begin{align*}
m(t)&\defi \int_{\R^{d}_{+}}\int_{\mathbb{S}^{d-1}}u(t,x,\theta)\; \mathrm{d}\theta\; \mathrm{d}x\,, \qquad \text{mass},\\
\Phi_{out}(t)&\defi \int^{t}_{0}\int_{\Gamma^{+}} u\, ( \theta\cdot n(\bar{x}) )\; \mathrm{d}\theta\; \mathrm{d}\bar{x}\; \mathrm{d}\tau\,, \qquad \text{total mass output in}\, (0,t),\\
\Phi_{in}(t)&\defi \int^{t}_{0}\int_{\Gamma^{-}} g\, \big| \theta\cdot n(\bar{x}) \big|\; \mathrm{d}\theta\; \mathrm{d}\bar{x}\; \mathrm{d}\tau\,,\qquad\text{total mass input in}\, (0,t)\,.
\end{align*}
We assume in the sequel that the total input of mass is finite, $$m(0)+\sup_{t\geq0}\Phi_{in}(t)=:c(m_0,\Phi_{in})<\infty\,.$$  As a consequence, the solutions are assumed with uniform bounded mass $$\sup_{t\geq0}m(t)\leq c(m_0,\Phi_{in})\,,\qquad \sup_{t\geq0}\Phi_{out}(t)\leq c(m_0,\Phi_{in})\,.$$
The following proposition is the analog to \cite[Proposition 4.1]{alonso}, see below Proposition \ref{app-inter-x-theta}.  We refer to this reference for its proof which is based on Sobolev embedding and elementary interpolation. 
\begin{prop}\label{prop4.1}
Let $0\leq u \in L^{2}\big( (t_0,t_{1}) ; H^{s}_{\theta}\cap H^{s'}_{x}\big)$, for some $0<s'<s$, be a solution of the RTE \eqref{eq:1}.  Then, there exist an explicit $\omega>1$ and a constant $C(m_0,\Phi_{in})>0$ such that
\begin{equation*}
\begin{split}
\MoveEqLeft[0]
\int^{t_{1}}_{t_0}\norm{u}^{2\omega}_{L^{2}_{x,\theta}}\;\mathrm{d}\tau\leq C(m_0,\Phi_{in})\left( \int^{t_{1}}_{t_0}\int_{\R^{d}_{+}}\norm{u}^{2}_{H^{s}_{\theta}}\;\mathrm{d}x\;\mathrm{d}\tau+\int^{t_{1}}_{t_0}\norm{(-\Delta_x)^{s'/2}\;u}^{2}_{L^{2}_{x,\theta}}\;\mathrm{d}\tau\right).
\end{split}
\end{equation*}
\end{prop}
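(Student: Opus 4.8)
The claimed inequality will follow by integrating in $t$ a pointwise‑in‑time functional estimate, in which the constant $C(m_0,\Phi_{in})$ and the hypothesis $u\ge0$ enter only through a uniform bound on the mass. So the plan is: (i) record the mass bound; (ii) upgrade the $L^2_\theta$ regularity in the kinetic variable and the $L^2_x$ regularity in the spatial variable to higher Lebesgue integrability via Sobolev embedding; (iii) interpolate against the $L^1$ mass bound to produce a \emph{superlinear} power of $\Vert u(t)\Vert_{L^2_{x,\theta}}$; (iv) integrate in time.

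First I would use that, since $u\ge0$, one has $\Vert u(t)\Vert_{L^1_{x,\theta}}=m(t)$ for every $t$, so the mass balance \eqref{mass} and the standing assumption that the total input of mass is finite give $\sup_t\Vert u(t)\Vert_{L^1_{x,\theta}}\le c(m_0,\Phi_{in})$; this is the only place the RTE structure and nonnegativity are used, and from here on everything is done for a.e.\ fixed $t$. By the kinetic Sobolev embedding \eqref{2.14}, $\Vert u(t,x,\cdot)\Vert_{L^{p_s}_\theta}\lesssim\Vert u(t,x,\cdot)\Vert_{H^s_\theta}$ with $\tfrac1{p_s}=\tfrac12-\tfrac{s}{d-1}$; squaring and integrating in $x$ gives $u(t)\in L^2_xL^{p_s}_\theta$ with norm $\lesssim(\int_{\R^d_+}\Vert u(t)\Vert^2_{H^s_\theta}\,dx)^{1/2}$. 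Extending $u(t,\cdot,\theta)$ by zero to $\R^d$ — consistent with the definition \eqref{SpatialDiff}, as the text notes — and using the classical Sobolev embedding $\dot H^{s'}(\R^d)\hookrightarrow L^{p_{s'}}(\R^d)$, $\tfrac1{p_{s'}}=\tfrac12-\tfrac{s'}{d}$ (valid since $0<s'<1\le d/2$), then squaring and integrating in $\theta$ gives $u(t)\in L^{p_{s'}}_xL^2_\theta$ with norm $\lesssim\Vert(-\Delta_x)^{s'/2}u(t)\Vert_{L^2_{x,\theta}}$. Note $p_s,p_{s'}>2$.

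Next I would combine these two mixed‑order regularities into a small but genuine amount of joint smoothness in the $(2d-1)$‑dimensional phase space. Writing $\mathcal R(t):=\int_{\R^d_+}\Vert u(t)\Vert^2_{H^s_\theta}\,dx+\Vert(-\Delta_x)^{s'/2}u(t)\Vert^2_{L^2_{x,\theta}}$, one gets $u(t)\in H^{s'}(\R^d\times\mathbb S^{d-1})$ with $H^{s'}$‑norm $\lesssim\mathcal R(t)^{1/2}$: in Gagliardo form this is the elementary comparison of the $\R^d\times\mathbb S^{d-1}$ kernel of order $s'$ with the $x$‑kernel and the $\theta$‑kernel of order $s'$ obtained after integrating out one variable, using $s'<s$ to absorb the $\theta$‑part into $\Vert u(t)\Vert_{H^s_\theta}$ via \eqref{2.12} (whose right‑hand side also carries the $L^2_{x,\theta}$ contribution needed for the full $H^{s'}$‑norm). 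The Sobolev embedding in dimension $2d-1$ then yields $u(t)\in L^q_{x,\theta}$ with $\tfrac1q=\tfrac12-\tfrac{s'}{2d-1}$, in particular $q>2$, and $\Vert u(t)\Vert^2_{L^q_{x,\theta}}\lesssim\mathcal R(t)$. (Alternatively, and as in \cite{alonso}, one may stay with the mixed‑norm spaces $L^2_xL^{p_s}_\theta$ and $L^{p_{s'}}_xL^2_\theta$ and interpolate them directly against $L^1_{x,\theta}$; the outcome is the same.) Finally, interpolating the exponent $2$ between $1$ and $q$,
\[
\Vert u(t)\Vert_{L^2_{x,\theta}}\le\Vert u(t)\Vert_{L^1_{x,\theta}}^{1-\lambda}\,\Vert u(t)\Vert_{L^q_{x,\theta}}^{\lambda},\qquad\tfrac12=(1-\lambda)+\tfrac\lambda q,\quad\lambda=\tfrac{q}{2(q-1)}\in(\tfrac12,1),
\]
raising to the power $2/\lambda$ and inserting the mass bound together with $\Vert u(t)\Vert^2_{L^q_{x,\theta}}\lesssim\mathcal R(t)$ gives
\[
\Vert u(t)\Vert_{L^2_{x,\theta}}^{2\omega}\le C(m_0,\Phi_{in})\,\mathcal R(t),\qquad\omega:=\tfrac1\lambda=1+\tfrac{2s'}{2d-1}>1,
\]
and integrating over $t\in(t_0,t_1)$ is exactly the assertion.

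\textbf{Main difficulty.} The only step with real content is the passage from mixed partial fractional regularity (order $s$ in the $(d-1)$‑dimensional spherical variable, order $s'$ in the $d$‑dimensional spatial half‑space) to a \emph{strict} gain of Lebesgue integrability in the full phase space — equivalently, to an exponent $\omega>1$. One must be slightly careful that $\mathbb S^{d-1}$ is compact (so that $\Vert u\Vert_{H^{s'}_\theta}\lesssim\Vert u\Vert_{H^s_\theta}$) and that the half‑space fractional spatial derivative defined by zero‑extension in \eqref{SpatialDiff} is compatible with the Euclidean Sobolev embedding; beyond that, the argument is Hölder and $L^p$‑interpolation bookkeeping, exactly as in \cite[Prop.~4.1]{alonso}.
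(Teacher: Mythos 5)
Your proof is correct, and its overall skeleton is exactly the one the paper intends: a uniform $L^1_{x,\theta}$ bound from the mass balance \eqref{mass} (this is where $u\geq 0$ enters), a gain of integrability in phase space coming from the two partial fractional regularities, $L^1$--$L^q$ interpolation to produce a superlinear power of $\norm{u(t)}_{L^{2}_{x,\theta}}$, and integration in time. Where you genuinely deviate is in how the integrability gain is implemented. The paper defers to \cite{alonso} and to Proposition \ref{app-inter-x-theta}, i.e.\ it stays in the mixed-norm spaces $L^{2}_{x}L^{p_s}_{\theta}$ and $L^{p_{s'}}_{x}L^{2}_{\theta}$ obtained from \eqref{2.14} and the spatial Sobolev embedding, and combines them into an $L^{r}_{x,\theta}$ bound by H\"older in each variable plus Minkowski's integral inequality; you instead assemble the two partial regularities into a full $H^{s'}(\R^{d}\times\mathbb{S}^{d-1})$ bound (kernel splitting in Gagliardo form, using $s'<s$, compactness of the sphere, \eqref{2.12}, and the zero-extension convention for $(-\Delta_x)^{s'/2}$) and then apply a single Sobolev embedding in dimension $2d-1$. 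Both routes are legitimate and give an explicit $\omega>1$ with a constant depending only on $m_0,\Phi_{in}$ (and $d,s,s'$), which is all the Proposition claims. The trade-off is quantitative: your route uses the angular regularity only through $s'<s$, so your exponent $\omega=1+\tfrac{2s'}{2d-1}$ is generally smaller than the one produced by the mixed-norm interpolation of Proposition \ref{app-inter-x-theta}, which exploits the full order $s$ in $\theta$ and the order $s'$ in $x$ separately; since $\omega$ feeds into the decay rate $t^{-1/(\omega-1)}$ of Proposition \ref{prop4.3}, the paper's version yields a sharper a priori decay, while yours is slightly simpler to state and verify.
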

\begin{prop}[Full energy a priori estimate]\label{cor1}
Let $u \in L^{2}\left( (t_0,t_1)\times \R^{d}_{+}\times \mathbb{S}^{d-1}\right)$ be a solution to the RTE (\ref{eq:1}) on $(t_0,t_1)$ with $\theta_{d}\,g\in L^{2}\big((t_0,t_1) ; L^{2}_{x,\theta}\big)$. Then
\begin{equation*}
\begin{split}
\sup_{t\in (t_0,t_1)}\Big(\norm{u(t)}^{2}_{L^{2}_{x,\theta}}\Big)+\int^{t_{1}}_{t_0}\int_{\mathbb{R}^{d}_{+}}&\norm{u}^{2}_{H^{s}_{\theta}}\;\mathrm{d}x\; \mathrm{d}\tau+\int^{t_{1}}_{t_0}\norm{(-\Delta_x)^{s_0/2}u}^{2}_{L^{2}_{x,\theta}}\mathrm{d}\tau\\
&\qquad \leq C\left(\norm{u(t_0,\cdot,\cdot)}^{2}_{L^{2}_{x,\theta}}+\norm{\theta_{d}\,g}^{2}_{L^{2}((t_0,t_1)\times\Gamma^{-})}\right)\,.
\end{split}
\end{equation*}
The constant depends as $C:=C(t_1,d,s,s_0)$ with $s_0$ defined in \eqref{eqthm}.
\end{prop}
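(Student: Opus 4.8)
The plan is to obtain each of the three terms on the left-hand side separately — the first two from the natural energy estimate \eqref{eq2.4} and the third from the averaging Theorem \ref{thm1} — and then add them up. This is essentially a bookkeeping argument resting on the two estimates already established, so I do not expect a genuine obstacle; the only delicate point is the correct matching of the boundary weights.

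First I would apply \eqref{eq2.4} with $t'=t_0$ and an arbitrary $t\in(t_0,t_1)$, discard the nonnegative outflux integral $\tfrac12\int_{t_0}^t\int_{\Gamma^+}u^2(\theta\cdot n)$ from the left, and read the remainder as a differential inequality for $E(\tau)\defi\|u(\tau)\|^2_{L^2_{x,\theta}}$, namely $E(t)\le E(t_0)+2D_1\int_{t_0}^t E(\tau)\,\mathrm{d}\tau+\|\sqrt{\theta_d}\,g\|^2_{L^2((t_0,t_1)\times\Gamma^-)}$, where I used that $\int_{\Gamma^-}g^2|\theta\cdot n|=\int_{\Gamma^-}(\sqrt{\theta_d}\,g)^2$. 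Grönwall's inequality then bounds $\sup_{t\in(t_0,t_1)}E(t)$ by $e^{2D_1(t_1-t_0)}\big(E(t_0)+\|\sqrt{\theta_d}\,g\|^2_{L^2((t_0,t_1)\times\Gamma^-)}\big)$, and feeding this back into \eqref{eq2.4} together with $\int_{t_0}^{t_1}E\le(t_1-t_0)\sup E$ also controls the dissipation term, giving $\int_{t_0}^{t_1}\int_{\R^d_+}\|u\|^2_{H^s_\theta}\,\mathrm{d}x\,\mathrm{d}\tau\le C\big(\|u(t_0)\|^2_{L^2_{x,\theta}}+\|\sqrt{\theta_d}\,g\|^2_{L^2((t_0,t_1)\times\Gamma^-)}\big)$ with $C$ depending on $t_1,d,s$ and the scattering data through $D_0,D_1$; here the equivalence \eqref{2.12} is what identifies $\int\int\|u\|^2_{H^s_\theta}$ with the coercive quantity produced by \eqref{eq2.4}.

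Next I would invoke Theorem \ref{thm1}. In \eqref{eqthm} the term $\|(-\Delta_v)^{s/2}U_{\mathcal J}\|^2_{L^2((t_0,t_1)\times\R^d_+\times\R^{d-1})}$ equals $\int_{t_0}^{t_1}\int_{\R^d_+}\|u\|^2_{H^s_\theta}\,\mathrm{d}x\,\mathrm{d}\tau$ by the very definition of the $H^s_\theta$ inner product, while $\|u\|^2_{L^2((t_0,t_1)\times\R^d_+\times\mathbb{S}^{d-1})}\le(t_1-t_0)\sup_t E(t)$; both, together with $\|u(t_0)\|_{L^2_{x,\theta}}$, were just controlled in the previous step. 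Hence \eqref{eqthm} yields $\int_{t_0}^{t_1}\|(-\Delta_x)^{s_0/2}u\|^2_{L^2_{x,\theta}}\,\mathrm{d}\tau\le C(t_1,d,s,s_0)\big(\|u(t_0)\|^2_{L^2_{x,\theta}}+\|\theta_d\,g\|^2_{L^2((t_0,t_1)\times\Gamma^-)}\big)$. Adding the three bounds — the $\sup_t\|u(t)\|^2_{L^2_{x,\theta}}$ estimate and the $H^s_\theta$-dissipation estimate from the first step and the $(-\Delta_x)^{s_0/2}$ estimate from this step — produces the claimed inequality, where I finally use $\|\theta_d\,g\|\le\|\sqrt{\theta_d}\,g\|$ since $|\theta_d|\le1$ on $\mathbb{S}^{d-1}$.

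The step deserving the most care is precisely this last compatibility check of the boundary norms: the energy inequality naturally involves the trace $\sqrt{\theta_d}\,g$, while Theorem \ref{thm1} and the statement of the proposition are written with $\theta_d\,g$, and one must be consistent about which one appears on the right-hand side (keeping $\sqrt{\theta_d}\,g$ throughout is harmless since it dominates $\theta_d\,g$). A secondary point is that all of the above presupposes enough regularity of $u$ to legitimately run \eqref{eq2.4} and to apply Theorem \ref{thm1} (which as stated needs $u\in\mathcal C([t_0,t_1);L^2)$); this is covered by the blanket assumption made at the beginning of Section 3 that solutions are taken smooth enough for the computations to be valid.
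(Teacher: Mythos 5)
Your proof is correct and follows essentially the same route as the paper, whose own proof of Proposition \ref{cor1} simply states that the result is a direct consequence of Theorem \ref{thm1} together with the energy estimates \eqref{eq2.4} and \eqref{eq2.3-1}, referring to \cite{alonso} for the Gr\"onwall-type bookkeeping that you carry out explicitly. The one caveat, which you already flag, is that the energy estimate delivers its boundary contribution in the $\sqrt{|\theta_{d}|}\,g$ weighted norm, so your final bound carries that (larger) quantity rather than the $\theta_{d}\,g$ norm written in the statement; this weight mismatch is inherited from the paper's own formulation and is not a defect of your argument.
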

\begin{proof}
The result is a direct consequence of Theorem \ref{thm1} and energy estimates \eqref{eq2.4} and \eqref{eq2.3-1}.  The reader can find details in \cite[proof of Corollary 4.2]{alonso}.
\end{proof}
\begin{prop}\label{prop4.3} Let $0\leq u$ be a sufficiently smooth solution of the RTE (\ref{eq:1}) on $(0,T)$ and assume that 
$$G_{T}:=\sup_{0<t<T}\big\{t^{\frac{1}{w-1}}\norm{\theta_{d}\,g}^{2}_{L^{2}((t,T)\times\Gamma^{-})}\big\}<\infty\,,$$
where $\omega>1$ is given in the Proposition \ref{prop4.1}.  Then,
\begin{equation}\label{4.6a}
\norm{u(t)}^{2}_{L^{2}_{x,\theta}}\leq A\; t^{-\frac{1}{\omega-1}} + \norm{\theta_{d}\,g}^{2}_{L^{2}((t/2,T)\times\Gamma^{-})}\quad\mbox{for all}\quad 0<t<T,
\end{equation}
where
\begin{equation*}
A:=C(m_0,\Phi_{in})\max\big\{ 1 ,G_{T} \big\}\,.
\end{equation*}
Moreover, for any $0< t < T$\,,
\begin{align}\label{4.6}
\begin{split}
\int^{T}_{t}\int_{\mathbb{R}^{d}_{+}}\norm{u}^{2}_{H^{s}_{\theta}}\;\mathrm{d}x\; \mathrm{d}\tau + & \int^{T}_{t}\norm{(-\Delta_x)^{s_0/2}u}^{2}_{L^{2}_{x,\theta}}\mathrm{d}\tau \\
&\qquad \leq C\left(A\; t^{-\frac{1}{\omega-1}} + 2\norm{\theta_{d}\,g}^{2}_{L^{2}((t/2,T)\times\Gamma^{-})}\right)\,.
\end{split}
\end{align}
\end{prop}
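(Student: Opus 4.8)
The plan is to derive a nonlinear integral inequality for the energy $y(t):=\norm{u(t)}^{2}_{L^{2}_{x,\theta}}$ by combining the kinetic smoothing of Theorem~\ref{thm1} — in the packaged form of Proposition~\ref{cor1} — with the interpolation inequality of Proposition~\ref{prop4.1}, and then to upgrade that integral inequality to the stated pointwise decay by a Bernoulli--Gr\"onwall comparison. Write $N(\tau):=\norm{\theta_{d}\,g}^{2}_{L^{2}((\tau,T)\times\Gamma^{-})}$, so the hypothesis reads $N(\tau)\le G_{T}\tau^{-1/(\omega-1)}$ on $(0,T)$. First I would record two a priori facts: from \eqref{eq2.3-1}, discarding the nonnegative outflux term, the sub-monotonicity $y(t)\le y(t')+N(t')$ for $0<t'\le t<T$; and, on any $(t_{0},t_{1})\subset(0,T)$, the estimate obtained by feeding the dissipation bound of Proposition~\ref{cor1}, namely $C(y(t_{0})+N(t_{0}))$, into Proposition~\ref{prop4.1} with the admissible exponent $s'=s_{0}$ (note $0<s_{0}<s$), which yields
\[
\int_{t_{0}}^{t_{1}}y(\tau)^{\omega}\,d\tau\ \le\ C_{\star}\bigl(y(t_{0})+N(t_{0})\bigr),\qquad C_{\star}=C_{\star}(m_{0},\Phi_{in},d,s,T),
\]
the hypotheses of Propositions~\ref{cor1} and~\ref{prop4.1} on $(t_{0},T)$ being guaranteed by the sub-monotonicity.

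The crux is to convert this into a pointwise bound. I would set $h(t):=\int_{t}^{T}y(\tau)^{\omega}\,d\tau$, which is $C^{1}$ with $h'=-y^{\omega}$ and finite on $(0,T)$ by the previous step. Applying the nonlinear bound on $(t,T)$ gives $h(t)\le C_{\star}(y(t)+N(t))$, hence $-h'(t)=y(t)^{\omega}\ge C_{\star}^{-\omega}\bigl(h(t)-C_{\star}N(t)\bigr)_{+}^{\omega}$. Fixing $t$ and working on the window $[t/2,t]$ — where $N\le N_{t/2}:=G_{T}(t/2)^{-1/(\omega-1)}$ is \emph{constant} — the function $g_{1}:=\bigl(h-2C_{\star}N_{t/2}\bigr)_{+}$ is nonincreasing (because $h$ is) and, where positive, obeys the Bernoulli inequality $g_{1}'\le-(2C_{\star})^{-\omega}g_{1}^{\omega}$; integrating $\tfrac{d}{d\sigma}\bigl(g_{1}^{1-\omega}\bigr)\ge(\omega-1)(2C_{\star})^{-\omega}$ from $t/2$ to $t$, in which the nonnegative initial value $g_{1}(t/2)^{1-\omega}$ simply drops out, gives $g_{1}(t)\le C(\omega,C_{\star})t^{-1/(\omega-1)}$ and therefore $h(t)\le C(1+G_{T})t^{-1/(\omega-1)}$.

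To finish, I would use sub-monotonicity once more: $y(\sigma)\ge y(t)-N(t/2)$ for $\sigma\in(t/2,t)$, so $\tfrac{t}{2}\bigl(y(t)-N(t/2)\bigr)_{+}^{\omega}\le\int_{t/2}^{t}y^{\omega}\le h(t/2)\le C(1+G_{T})(t/2)^{-1/(\omega-1)}$; solving for $y(t)$ and using the scaling identity $t^{-1/(\omega-1)}/t=t^{-\omega/(\omega-1)}$ gives $y(t)\le N(t/2)+C(1+G_{T})^{1/\omega}t^{-1/(\omega-1)}$, which is \eqref{4.6a} with $A=C(m_{0},\Phi_{in})\max\{1,G_{T}\}$ (and, absorbing $N(t/2)\le G_{T}2^{1/(\omega-1)}t^{-1/(\omega-1)}$, even the cleaner $y(t)\le A\,t^{-1/(\omega-1)}$). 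Then \eqref{4.6} follows by applying Proposition~\ref{cor1} on $(t/2,T)$ — whose left-hand side dominates that of \eqref{4.6} — and bounding its right-hand side $C\bigl(y(t/2)+N(t/2)\bigr)$ through the decay just proved at time $t/2$, namely $C\bigl(A(t/2)^{-1/(\omega-1)}+N(t/2)\bigr)\le C\bigl(A\,t^{-1/(\omega-1)}+2N(t/2)\bigr)$.

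The main obstacle is exactly the second step: the estimate one actually controls is time-integrated, $\int y^{\omega}\lesssim y(t_{0})+N$, and there is no a priori handle on $y$ as $t\to 0$ — indeed the target $y\sim t^{-1/(\omega-1)}$ forces $y^{\omega}\notin L^{1}$ near $0$ — so a differential inequality cannot be integrated ``from $0$''. Passing to the finite tail $h(t)=\int_{t}^{T}y^{\omega}$ and using a constant barrier on the dyadic window $[t/2,t]$ is the device that removes this obstruction, while the matching scaling $N(\tau)\lesssim\tau^{-1/(\omega-1)}$ of the boundary source is what keeps the barrier argument self-consistent. Everything else — Propositions~\ref{cor1} and~\ref{prop4.1} and the elementary algebra of the last step — is routine, and the computation parallels the corresponding argument in \cite{alonso}.
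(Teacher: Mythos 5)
Your proof is correct and takes essentially the same route as the paper: combining Proposition~\ref{prop4.1} with Proposition~\ref{cor1} to get the tail inequality $\int_t^T\|u(\tau)\|^{2\omega}_{L^2_{x,\theta}}\,\mathrm{d}\tau\le C\big(\|u(t)\|^2_{L^2_{x,\theta}}+\|\theta_d\,g\|^2_{L^2((t,T)\times\Gamma^-)}\big)$, deducing the Bernoulli-type decay $t^{-1/(\omega-1)}$ for that tail, converting it into a pointwise bound on a dyadic time window, and then closing \eqref{4.6} with Proposition~\ref{cor1} once more. The only differences are in execution: you integrate the Bernoulli inequality by hand via a constant barrier on $[t/2,t]$ where the paper invokes comparison with explicit Bernoulli ODE solutions, and you obtain the pointwise bound from the energy sub-monotonicity of \eqref{eq2.3-1} on $(t/2,t)$ where the paper raises Proposition~\ref{cor1} to the power $\omega$ and integrates over $(t,2t)$ — equivalent devices, since Proposition~\ref{cor1} would serve your window step equally well.
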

\begin{proof}
Use Proposition \ref{prop4.1} and Proposition \ref{cor1}, with $t_{1}=T$ and $t_0=t$, to obtain
\begin{equation}\label{4.4}
\int^{T}_{t}\norm{u(\tau)}^{2\omega}_{L^{2}_{x,\theta}}\text{d}\tau\leq C\left(\norm{u(t)}^{2}_{L^{2}_{x,\theta}}+\norm{\theta_{d}\,g}^{2}_{L^{2}((t,T)\times\Gamma^{-})}\right),\quad 0<t\leq T,
\end{equation}
where $C=C(m_0,\Phi_{in})$.  Introduce
$$Y(t):=\int^{T}_{t}\norm{u(\tau)}^{2\omega}_{L^{2}_{x,\theta}}\text{d}\tau,\quad 0<t < T,$$
then, time differentiation and \eqref{4.4} lead to 
$$\dfrac{\text{d}Y}{\text{d}t}(t)+c\,Y^{\omega}(t) \leq \norm{\theta_{d}\,g}^{2w}_{L^{2}((t,T)\times\Gamma^{-})}.$$
where $c:=c(m_0,\Phi_{in},w)$.  This is a Bernoulli differential inequality.  Therefore, using a standard comparison method with explicit solutions of Bernoulli ODE, one concludes that
$$Y(t)\leq A\,t^{-\frac{1}{w-1}}\,,\qquad 0<t<T\,,$$
with $A$ as defined in the statement.
Again, using Proposition \ref{cor1} with $t_0=\tau$ and $t_{1}=T$, it follows that
\begin{equation*}
\begin{split}
\sup_{s\in (\tau,T)}&\left(\norm{u}^{2w}_{L^{2}_{x,\theta}}(s)\right) \leq C\left(\norm{u(\tau,\cdot,\cdot)}^{2w}_{L^{2}_{x,\theta}}+\norm{\theta_{d}\,g}^{2w}_{L^{2}((\tau,T)\times\Gamma^{-})}\right)\,.
\end{split}
\end{equation*}
Thus, integrating in $\tau\in(t,2t)\in(0,T)$ and using the bound on $Y(t)$, we obtain that
\begin{equation*}
t\,\norm{u(2\,t)}^{2\omega}_{L^{2}_{x,\theta}}\leq A\;t^{-\frac{1}{w-1}} +t\,\norm{\theta_{d}\,g}^{2\omega}_{L^{2}((t,T)\times\Gamma^{-})}\,,
\end{equation*}
that is,
\begin{equation*}
\norm{u(2\,t)}^{2}_{L^{2}_{x,\theta}}\leq A\;t^{-\frac{1}{w-1}} + \norm{\theta_{d}\,g}^{2}_{L^{2}((t,T)\times\Gamma^{-})}\,.
\end{equation*}
This proves estimate \eqref{4.6a} by renaming $t\rightarrow t/2$.  Estimate \eqref{4.6} follows from both, Proposition \ref{cor1} with $t_{0}=t$ and $t_{1}=T$ that give us
\begin{equation*}
\int^{T}_{t}\int_{\mathbb{R}^{d}_{+}}\norm{u}^{2}_{H^{s}_{\theta}}\;\mathrm{d}x\; \mathrm{d}\tau+\int^{T}_{t}\norm{(-\Delta_x)^{s_0/2}u}^{2}_{L^{2}_{x,\theta}}\mathrm{d}\tau \leq C\left(\norm{u(t)}^{2}_{L^{2}_{x,\theta}}+\norm{\theta_{d}\,g}^{2}_{L^{2}((t,T)\times\Gamma^{-})}\right)\,,
\end{equation*}
and estimate \eqref{4.6a} used in the right side.
\end{proof}
\begin{rem} Recall that for $0<t<T$
\begin{equation*}
\norm{(-\Delta_{v})^{s/2}U_{\mathcal{J}}}^{2}_{L^{2}\left( (t,T)\times \R^{d}_{+} \times \mathbb{R}^{d-1}\right)} \lesssim \int^{T}_{t}\int_{\mathbb{R}^{d}_{+}}\norm{u}^{2}_{H^{s}_{\theta}}\;\mathrm{d}x\; \mathrm{d}\tau\,.
\end{equation*}
Therefore, Proposition \ref{prop4.3} implies,
\begin{align*}
 \norm{(-\Delta_{v})^{s/2}U_{\mathcal{J}}}^{2}_{L^{2}\left( (t,T)\times \R^{d}_{+} \times \mathbb{R}^{d-1}\right)} \leq C\left(A\; t^{-\frac{1}{\omega-1}} + 2\norm{\theta_{d}\,g}^{2}_{L^{2}((t/2,T)\times\Gamma^{-})}\right)\,.
\end{align*}
\end{rem}
\subsubsection{Tangential spatial regularity.}  We finish this section studying higher regularity, up to the boundary, for solutions $u(t,x,\theta)$ of the RTE \eqref{eq:1}.  We first study the spatial regularity for which the analysis is relatively simple since the differential operator $\partial^{\ell}_{x}$, with $\ell=(\ell_{1},\cdots,\ell_{d})$ a multi-index with nonnegative integer entries, commutes with the RTE operator.  Indeed, if $w^{\ell}(t,x,\theta)=(\partial^{\ell}_{x}u)(t,x,\theta)$ then 
\begin{equation}\label{eq:1-x}
\partial_t w^{\ell} +\theta\cdot\nabla_x w^{\ell} =\mathcal{I}(w^{\ell}) \quad \text{in} \quad (0,T)\times \mathbb{R}^{d}_+\times \mathbb{S}^{d-1}\,.
\end{equation}
Write $\ell=(\bar{\ell},\ell_{d})$ and take $\ell_{d}=0$.  Then, $w^{(\bar{\ell},0)}(t,\bar{x},0,\theta)=g^{\bar{\ell}}(t,\bar{x},\theta)$.  As a consequence, using Proposition \ref{cor1} with $u=w^{(\bar{\ell},0)}$ it follows that
\begin{equation*}
\begin{split}
\sup_{t\in (t_0,t_1)}\Big(\norm{w^{(\bar{\ell},0)}(t)}^{2}_{L^{2}_{x,\theta}}\Big)+\int^{t_{1}}_{t_0}\int_{\mathbb{R}^{d}_{+}}&\norm{w^{(\bar{\ell},0)}}^{2}_{H^{s}_{\theta}}\;\mathrm{d}x\; \mathrm{d}\tau+\int^{t_{1}}_{t_0}\norm{(-\Delta_x)^{s_0/2}w^{(\bar{\ell},0)}}^{2}_{L^{2}_{x,\theta}}\mathrm{d}\tau\\
&\qquad \leq C\left(\norm{w^{(\bar{\ell},0)}(t_0,\cdot,\cdot)}^{2}_{L^{2}_{x,\theta}}+\norm{\theta_{d}\,g^{\bar{\ell}}}^{2}_{L^{2}((t_0,t_1)\times\Gamma^{-})}\right)\,.
\end{split}
\end{equation*}
Now, classical interpolation\footnote{$\| \partial^{\bar{\ell}}_{x}\phi\|_{L^{2}_{x}} \leq \|(-\Delta_x)^{s_0/2} \partial^{\bar{\ell}}_{x}\phi\|^{\alpha}_{L^{2}_{x}} \| \phi \|^{1-\alpha}_{L^{2}_{x}} $ for multi-index $\bar{\ell}$, $s_{0}\geq0$, and $\alpha=\frac{\bar{|\ell|}}{\bar{|\ell|}+s_0}$.} gives that
\begin{equation*}
\norm{w^{(\bar{\ell},0)}(t)}_{L^{2}_{x,\theta}} \leq C_{\ell}\norm{(-\Delta_x)^{s_0/2}w^{(\bar{\ell},0)}(t)}^{\alpha}_{L^{2}_{x,\theta}}\norm{u(t)}^{1-\alpha}_{L^{2}_{x,\theta}}\,,\qquad \alpha=\frac{\bar{|\ell|}}{\bar{|\ell|}+s_0}\,.
\end{equation*}
As a consequence, Proposition \ref{prop4.3} leads to
\begin{equation*}
\norm{w^{(\bar{\ell},0)}(t)}_{L^{2}_{x,\theta}} \leq C_{t_0}\norm{(-\Delta_x)^{s_0/2}w^{(\bar{\ell},0)}(t)}^{\alpha}_{L^{2}_{x,\theta}}\,,
\end{equation*}
for a constant depending on $C_{t_0}:=C_{\ell}(t_0,m_0,\Phi_{in},\|g\|_{L^{2}_{t,x,\theta}})$.  Therefore, 
\begin{equation*}
\begin{split}
\sup_{t\in (t_0,t_1)}\Big(\norm{w^{(\bar{\ell},0)}(t)}^{2}_{L^{2}_{x,\theta}}\Big)+\tilde{C}_{t_0}\int^{t_{1}}_{t_0}&\norm{w^{(\bar{\ell},0)}}^{2/\alpha}_{L^{2}_{x,\theta}}\\
&\leq C\left(\norm{w^{(\bar{\ell},0)}(t_0,\cdot,\cdot)}^{2}_{L^{2}_{x,\theta}}+\norm{\theta_{d}\,g^{\bar{\ell}}}^{2}_{L^{2}((t_0,t_1)\times\Gamma^{-})}\right)\,.
\end{split}
\end{equation*}
The same argument that proves Proposition \ref{prop4.3} leads to the control 
\begin{equation}\label{eq:spatial-tangential}
\norm{\partial^{(\bar{\ell},0)}_{x}u(t)}_{L^{2}_{x,\theta}}\leq C\big(t_0,m_0,\Phi_{in},\norm{\theta_{d}\,g}_{H^{\bar{\ell}}_{\bar{x}}((t_0/2,T)\times\Gamma^{-})}\big)\quad\mbox{for all}\quad 0< t_0 \leq t < T\,.
\end{equation}

\begin{prop}[Tangential spatial regularity]\label{tangential-reg} Let $0\leq u$ be a sufficiently smooth solution of the RTE (\ref{eq:1}) on $(0,T)$ and assume that 
$$\norm{\theta_{d}\,\partial^{\bar{\ell}}_{\bar{x}}g}^{2}_{L^{2}((t_0/2,T)\times\Gamma^{-})}<\infty\,,\quad 0<t_0<T\,.$$
Then,
\begin{equation}\label{4.6a}
\norm{\partial^{(\bar{\ell},0)}_{x}u(t)}^{2}_{L^{2}_{x,\theta}}\leq C\big(t_0,m_0,\Phi_{in},\norm{\theta_{d}\,g}_{H^{\bar{\ell}}_{\bar{x}}((t_0/2,T)\times\Gamma^{-})}\big)\quad\mbox{for all}\quad t_0<t<T\,.
\end{equation}
Moreover,
\begin{align}\label{4.6}
\begin{split}
\int^{T}_{t_0}\int_{\mathbb{R}^{d}_{+}}\norm{\partial^{(\bar{\ell},0)}_{x}u}^{2}_{H^{s}_{\theta}}\;\mathrm{d}x\; \mathrm{d}\tau + & \int^{T}_{t_0}\norm{(-\Delta_x)^{s_0/2}\partial^{(\bar{\ell},0)}_{x}u}^{2}_{L^{2}_{x,\theta}}\mathrm{d}\tau \\
&\qquad \leq C\big(t_0,m_0,\Phi_{in},\norm{\theta_{d}\,g}_{H^{\bar{\ell}}_{\bar{x}}((t_0/2,T)\times\Gamma^{-})}\big)\,.
\end{split}
\end{align}
\end{prop}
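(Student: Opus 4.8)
The plan is to reduce everything to the machinery already built --- Theorem~\ref{thm1}, Proposition~\ref{cor1}, and Proposition~\ref{prop4.3} --- applied to the tangentially differentiated solution. Fix a tangential multi-index $\bar\ell$ and assume $|\bar\ell|\ge1$, the case $\bar\ell=0$ being Proposition~\ref{prop4.3}. Set $w:=\partial^{(\bar\ell,0)}_{x}u$. Since $\partial^{\bar\ell}_{\bar x}$ commutes with $\partial_t+\theta\cdot\nabla_x$ and with $\mathcal I$, preserves the half-space $\R^d_+$, and commutes with the trace on $\{x_d=0\}$, the function $w$ solves \eqref{eq:1-x} on $(0,T)$ with inflow datum $g^{\bar\ell}:=\partial^{\bar\ell}_{\bar x}g$ on $\Sigma^{T}_{-}$; its outflow trace on $\Gamma^{+}$ is not prescribed but, exactly as in the proof of Theorem~\ref{thm1}, is recovered from $\theta\cdot\nabla_x w$, so $w$ is admissible in Theorem~\ref{thm1}, hence in Proposition~\ref{cor1}. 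The essential observation is that $w$ need not be nonnegative, so the ingredients resting on $L^1$ mass control (Proposition~\ref{prop4.1} and the constant $c(m_0,\Phi_{in})$) are unavailable for $w$; they are replaced by Proposition~\ref{cor1}, which uses only $L^2$ data, together with the Gagliardo--Nirenberg interpolation of the footnote above and the $L^2_{x,\theta}$ bound for $u$ already furnished by Proposition~\ref{prop4.3}.

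The main steps are as follows. For $t_0/2\le\tau<T$, apply Proposition~\ref{cor1} to $w$ on $(\tau,T)$, which bounds $\sup_{(\tau,T)}\norm{w}^2_{L^2_{x,\theta}}$, $\int^{T}_{\tau}\!\int_{\R^d_+}\norm{w}^2_{H^s_\theta}$ and $\int^{T}_{\tau}\norm{(-\Delta_x)^{s_0/2}w}^2_{L^2_{x,\theta}}$ by $C\big(\norm{w(\tau)}^2_{L^2_{x,\theta}}+\norm{\theta_d g^{\bar\ell}}^2_{L^2((\tau,T)\times\Gamma^-)}\big)$. With $\alpha:=\tfrac{|\bar\ell|}{|\bar\ell|+s_0}\in(0,1)$ and $\omega_{\bar\ell}:=1/\alpha=1+\tfrac{s_0}{|\bar\ell|}>1$, raise the interpolation inequality $\norm{w(\sigma)}_{L^2_{x,\theta}}\lesssim\norm{(-\Delta_x)^{s_0/2}w(\sigma)}^{\alpha}_{L^2_{x,\theta}}\norm{u(\sigma)}^{1-\alpha}_{L^2_{x,\theta}}$ to the power $2\omega_{\bar\ell}$ and use that, by Proposition~\ref{prop4.3}, $\norm{u(\sigma)}_{L^2_{x,\theta}}$ is bounded on $[t_0/2,T)$ by a constant depending on $t_0,m_0,\Phi_{in}$ and $\norm{\theta_d g}_{L^2_{t,x,\theta}}$; integrating in $\sigma$ converts the smoothing term into
\begin{equation*}
\int^{T}_{\tau}\norm{w(\sigma)}^{2\omega_{\bar\ell}}_{L^2_{x,\theta}}\,\mathrm d\sigma\le C_{t_0}\Big(\norm{w(\tau)}^2_{L^2_{x,\theta}}+\norm{\theta_d g^{\bar\ell}}^2_{L^2((\tau,T)\times\Gamma^-)}\Big),\qquad t_0/2\le\tau<T,
\end{equation*}
the structural analogue of \eqref{4.4} with $(\omega,g)$ replaced by $(\omega_{\bar\ell},g^{\bar\ell})$. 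From here one repeats verbatim the argument proving Proposition~\ref{prop4.3}: with $Y(\tau):=\int^{T}_{\tau}\norm{w(\sigma)}^{2\omega_{\bar\ell}}_{L^2_{x,\theta}}\,\mathrm d\sigma$ one obtains the Bernoulli inequality $Y'+cY^{\omega_{\bar\ell}}\le c'\norm{\theta_d g^{\bar\ell}}^{2\omega_{\bar\ell}}_{L^2((\tau,T)\times\Gamma^-)}$ on $(t_0/2,T)$, and comparison with the explicit Bernoulli solution (the forcing being controlled, near $\tau=t_0/2$, by the assumed finiteness of $\norm{\theta_d\partial^{\bar\ell}_{\bar x}g}_{L^2((t_0/2,T)\times\Gamma^-)}$) gives $Y(\tau)\lesssim(\tau-t_0/2)^{-1/(\omega_{\bar\ell}-1)}$; applying Proposition~\ref{cor1} to $w$ again, raising to the power $\omega_{\bar\ell}$, averaging the resulting supremum estimate over $\tau\in(t,2t)$ and inserting the bound on $Y$ then yields $\norm{\partial^{(\bar\ell,0)}_x u(t)}^2_{L^2_{x,\theta}}\le C(t_0,m_0,\Phi_{in},\norm{\theta_d g}_{H^{\bar\ell}_{\bar x}((t_0/2,T)\times\Gamma^-)})$ for $t_0<t<T$, after absorbing $\norm{\theta_d g^{\bar\ell}}_{L^2((\tau,T)\times\Gamma^-)}\le\norm{\theta_d g}_{H^{\bar\ell}_{\bar x}((t_0/2,T)\times\Gamma^-)}$ for $\tau\ge t_0/2$; this is the first claimed estimate, which also recovers \eqref{eq:spatial-tangential}. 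For the \emph{moreover} part, insert this pointwise bound at $t=t_0$ into Proposition~\ref{cor1} applied to $w$ on $(t_0,T)$: its left-hand side is exactly $\int^{T}_{t_0}\!\int_{\R^d_+}\norm{\partial^{(\bar\ell,0)}_x u}^2_{H^s_\theta}+\int^{T}_{t_0}\norm{(-\Delta_x)^{s_0/2}\partial^{(\bar\ell,0)}_x u}^2_{L^2_{x,\theta}}$, and its right-hand side is now bounded by $C(t_0,m_0,\Phi_{in},\norm{\theta_d g}_{H^{\bar\ell}_{\bar x}((t_0/2,T)\times\Gamma^-)})$.

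The main obstacle --- in fact the only non-mechanical point --- is the admissibility of $w=\partial^{(\bar\ell,0)}_x u$ in Theorem~\ref{thm1}/Proposition~\ref{cor1}: one must check that differentiating introduces no uncontrolled boundary term and that the inflow datum for $w$ is genuinely $\partial^{\bar\ell}_{\bar x}g$. This is transparent precisely because the differentiation is purely tangential, so that $\partial^{\bar\ell}_{\bar x}$ preserves $\R^d_+$ and commutes with the trace on $\{x_d=0\}$, and the Fourier computation in the proof of Theorem~\ref{thm1} carries over word for word with $g$ replaced by $g^{\bar\ell}$; a normal derivative $\partial_{x_d}$ would instead have to be traded against the equation itself, reintroducing the full singular operator $\mathcal I$, and is not treated here. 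A secondary, routine point is the footnote's interpolation inequality for a fixed multi-index derivative, which is standard Gagliardo--Nirenberg applied componentwise (equivalently, after summing over all tangential multi-indices of order $|\bar\ell|$, each of which is controlled by Proposition~\ref{cor1}). Everything else is a verbatim repetition of the proof of Proposition~\ref{prop4.3}.
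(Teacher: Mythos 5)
Correct, and essentially the paper's own proof: you use that tangential differentiation commutes with the transport operator and with the trace on $\{x_d=0\}$ (so $w=\partial^{(\bar\ell,0)}_{x}u$ solves the RTE with inflow datum $\partial^{\bar\ell}_{\bar{x}}g$), apply Proposition \ref{cor1} to $w$, combine the interpolation footnote with the $L^{2}_{x,\theta}$ bound on $u$ from Proposition \ref{prop4.3}, run the same Bernoulli comparison as in Proposition \ref{prop4.3}, and feed the resulting pointwise bound back into Proposition \ref{cor1} for the integrated estimate. The only cosmetic adjustment: since your bound $Y(\tau)\lesssim(\tau-t_{0}/2)^{-1/(\omega_{\bar\ell}-1)}$ degenerates as $\tau\downarrow t_{0}/2$, average the supremum estimate over a fixed interval such as $(3t_{0}/4,t_{0})\subset(t_{0}/2,t)$ rather than over $(t,2t)$, which keeps the constant uniform for all $t\in(t_{0},T)$.
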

\begin{rem}
Note that higher regularity up to the boundary in the normal variable $x_{d}$ is not expected.  This can be seen from expression \eqref{eq8} by observing that the boundary term $\frac{\widehat{G}(w,\bar{k},\theta)}{\mathrm{i}(w + \theta \cdot k)}$ caps the decay in the Fourier variable $k_{d}\in (-\infty,\infty)$.  In addition,  $\theta$-regularity of $u$ is limited at the boundary as well due to the discontinuity on the set $\{ \theta_{d}=0 \}$.  Thus, it is unclear how much more regularity the solution enjoys with respect to the one specified in the averaging lemma. Deeper investigation of the optimal regularization near the boundary is an interesting aspect that will not be addressed in this section.  In the next section we will prove, however, that solutions are smooth in all spatial coordinates and time in the interior of the half-space.  
\end{rem}

\subsection{$\mathcal{C}^{\infty}$-- interior regularity.}
In this part of the paper we prove that solutions are smooth in all variables in the interior of the domain.  Take a smooth non negative and increasing cut-off function $\phi$ defined as
\begin{equation*}
\phi(x_{d}):=\bigg\{
\begin{array}{ccl}
0 & \text{if} & 0\leq x_{d}\leq 1\\
1 & \text{if} & x_{d}\geq2\,.
\end{array}
\end{equation*}
Set $\phi_{\epsilon}(x_{d})=\phi(x_{d}/\epsilon)$ for $\epsilon\in(0,1)$.  Multiply \eqref{eq:1} by $\phi_{\epsilon}$ to obtain
\begin{equation}\label{RTE-cutoff}
\partial_t \left( \phi_{\epsilon} u\right)+\theta \cdot \nabla_x \left( \phi_{\epsilon} u\right)=\mathcal{I}\left( \phi_{\epsilon} u\right)+\theta_{d}\,\phi^{\prime}_{\epsilon}\,u,\quad \text{in} \quad(0,\infty)\times\mathbb{R}^{d}\times\mathbb{S}^{d-1}\,,
\end{equation}
where, using an extension by zero, we interpret $\phi_{\epsilon}u$ to be defined in the whole space.  As a consequence, $\phi_{\epsilon}u$ satisfies the RTE \eqref{eq:1} with a source $F:=\theta_{d}\,\phi^{\prime}_{\epsilon}\,u$ for which we can apply the argument presented in \cite{alonso}.
\begin{prop}[Interior spatial regularity]\label{prop4b.5}
Let $u\in L^{2}\left( (t_0,t_1)\times \R^{d}_{+} \times \mathbb{S}^{d-1}\right)$ be a solution of the RTE \eqref{eq:1}.  Then, for any $l\in\mathbb{N}$ and $\epsilon\in(0,1)$ it follows that
\begin{equation}\label{4b.9}
\begin{split}
\sup_{t\in (t_0,t_1)}&\Big(\norm{(-\Delta_x)^{ls_0/2}(\phi_{\epsilon}u)(t)}^{2}_{L^{2}_{x,\theta}}\Big) + \norm{(-\Delta_x)^{(1+l)s_0/2}(\phi_{\epsilon} \,u)}^{2}_{L^{2}\left( (t_0,t_1)\times \R^{d} \times \mathbb{S}^{d-1}\right)}\\
&\qquad\qquad\leq  C(t_0,t_{1},m_0,\Phi_{in},\phi)\epsilon^{-2(1+ls_0)}\Big(1 + \norm{\theta_{d}\,g}^{2}_{L^{2}((t_0/2,t_1)\times\Gamma^{-})}\Big).
\end{split}
\end{equation}
\end{prop}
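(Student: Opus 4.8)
The plan is to run the standard hypoelliptic bootstrap, iterating the averaging estimate in the spatial smoothness index $l$. The starting observation is that $\phi_\epsilon$ vanishes on $\{x_d\le\epsilon\}$, so the extension by zero of $\phi_\epsilon u$ has zero trace on $\{x_d=0\}$; by \eqref{RTE-cutoff} this extension solves the RTE on the whole space $(0,\infty)\times\R^d\times\mathbb{S}^{d-1}$ with source $F_\epsilon\defi\theta_d\,\phi'_\epsilon\,u$ and no boundary contribution. Rerunning the proof of Theorem \ref{thm1} and of Proposition \ref{cor1} with an extra right-hand side $F$ --- which enters exactly as the term $T_3$ did, through the multiplier $(\lambda+\mathrm i(w+\theta\cdot k))^{-1}$, hence accompanied by a favourable negative power of $|k|$ --- yields, for any whole-space solution $v$ with source $F$ and zero boundary data,
\begin{equation*}
\sup_{t\in(t_0,t_1)}\norm{v(t)}^2_{L^2_{x,\theta}}+\int^{t_1}_{t_0}\!\!\int_{\R^d}\norm{v}^2_{H^s_\theta}\,\mathrm{d}x\,\mathrm{d}\tau+\int^{t_1}_{t_0}\norm{(-\Delta_x)^{s_0/2}v}^2_{L^2_{x,\theta}}\,\mathrm{d}\tau\lesssim \norm{v(t_0)}^2_{L^2_{x,\theta}}+\norm{F}^2_{L^2_{t,x,\theta}}.
\end{equation*}

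For the base case $l=0$ I would apply this to $v=\phi_\epsilon u$. Since $\phi'_\epsilon$ is supported in $\{\epsilon\le x_d\le2\epsilon\}$ with $\norm{\phi'_\epsilon}_{L^\infty}\lesssim\epsilon^{-1}$, we get $\norm{F_\epsilon}^2_{L^2_{t,x,\theta}}\lesssim\epsilon^{-2}\norm{u}^2_{L^2((t_0,t_1)\times\R^{d}_{+}\times\mathbb{S}^{d-1})}$, while $\norm{\phi_\epsilon u(t_0)}\le\norm{u(t_0)}$; both of these are bounded by $C(t_0,t_1,m_0,\Phi_{in})\big(1+\norm{\theta_d\,g}^2_{L^2((t_0/2,t_1)\times\Gamma^-)}\big)$ by the a priori bounds of Section 3, that is, Proposition \ref{cor1} combined with Proposition \ref{prop4.3}. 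This gives \eqref{4b.9} for $l=0$. I would carry the $H^s_\theta$ term along the iteration, since it is exactly what feeds the averaging estimate at the next step.

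For the inductive step $l-1\Rightarrow l$, note that $(-\Delta_x)^{ls_0/2}$ commutes with $\partial_t$, with $\theta\cdot\nabla_x$, and with $\mathcal{I}$ (which acts only in $\theta$); hence $v_l\defi(-\Delta_x)^{ls_0/2}(\phi_\epsilon u)$ solves the whole-space RTE with source $\theta_d\,(-\Delta_x)^{ls_0/2}(\phi'_\epsilon u)$ and zero boundary data, and I would apply the displayed estimate to $v_l$. The fixed-time quantity $\norm{v_l(t_0)}$ is treated as in the proof of Proposition \ref{prop4.3}: shrink to an interval $(\tau_0,t_1)$ with $\tau_0\in(t_0/2,t_0)$ a Lebesgue point at which $\norm{v_l(\tau_0)}^2\le\tfrac{2}{t_0}\int_{t_0/2}^{t_1}\norm{v_l}^2\,\mathrm{d}\tau$, the last integral being controlled by the second term of \eqref{4b.9} at level $l-1$ (with cut-off $\phi_\epsilon$); the $L^2_{t,x,\theta}$ and $L^2_tH^s_\theta$ norms of $v_l$ are controlled the same way. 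For the source I would write $\phi'_\epsilon u=\phi'_\epsilon\,(\phi_{\epsilon/2}u)$, which is legitimate because $\phi_{\epsilon/2}\equiv1$ on $\operatorname{supp}\phi'_\epsilon$, and apply a fractional Leibniz inequality
\begin{equation*}
\norm{(-\Delta_x)^{ls_0/2}(\phi'_\epsilon\,\phi_{\epsilon/2}u)}_{L^2}\lesssim\norm{\phi'_\epsilon}_{W^{ls_0,\infty}}\norm{\phi_{\epsilon/2}u}_{L^2}+\norm{\phi'_\epsilon}_{L^\infty}\norm{(-\Delta_x)^{ls_0/2}(\phi_{\epsilon/2}u)}_{L^2},
\end{equation*}
using $\norm{\phi'_\epsilon}_{W^{ls_0,\infty}}\lesssim\epsilon^{-1-ls_0}$, the $l=0$ a priori bound for the first factor, and the level-$(l-1)$ estimate \eqref{4b.9} applied with cut-off $\phi_{\epsilon/2}$ for the last factor; the tangential part of $(-\Delta_x)^{ls_0/2}$ can alternatively be absorbed directly using the tangential regularity of Proposition \ref{tangential-reg}. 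Collecting terms, the power of $\epsilon$ from the previous level is multiplied by the Leibniz constant, and after absorbing the dimensional $2^{(\cdot)}$ factors into $C$ one reaches \eqref{4b.9} at level $l$; restricting from $(\tau_0,t_1)$ back to $(t_0,t_1)$ and renaming closes the induction.

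I expect the main obstacle to be precisely this last Leibniz/commutator step with faithful bookkeeping of the $\epsilon$-powers. The real difficulty lies in the normal variable $x_d$: the solution has no a priori regularity up to $\{x_d=0\}$, so the $x_d$-smoothness needed to differentiate $\phi'_\epsilon u$ must be supplied by the smoothness of $\phi'_\epsilon$ itself --- which is what produces the negative powers $\epsilon^{-1-ls_0}$ --- or, equivalently, by trading normal derivatives for tangential, angular and time derivatives through the transport equation. Checking that this bookkeeping produces exactly the stated exponent $\epsilon^{-2(1+ls_0)}$ is the point requiring the most care, and it is the step for which one follows the argument in \cite{alonso}.
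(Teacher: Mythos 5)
Your proposal is correct and follows essentially the same route as the paper: extend $\phi_{\epsilon}u$ by zero to a whole-space solution with source $\theta_{d}\,\phi'_{\epsilon}u$, apply the whole-space averaging/energy estimate with source and no boundary term, and bootstrap in steps of $s_0$ by commuting $(-\Delta_x)^{ls_0/2}$ with the equation while controlling the differentiated source through the $\phi_{\epsilon/2}$ trick. Your fractional Leibniz bound is precisely the content of the paper's Proposition~\ref{prop-app1}, and your Lebesgue-point time-averaging of the $\norm{v_l(t_0)}$ term plays the same role as the paper's appeal to the argument leading to Proposition~\ref{tangential-reg}.
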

\begin{proof}
We perform induction on the regularity order $l$, proving the result for $l=s_0$ and then moving in steps $ks_0$, with $k=1,2,\cdots$.  For the basis case one notices that the arguments that led to Proposition \ref{cor1} can be applied to equation \eqref{RTE-cutoff}.  In particular, Theorem \ref{thm1} is applied in the whole space so that $g=0$ and with a source $F:=\theta_{d}\,\phi^{\prime}_{\epsilon}\,u$\footnote{The source can be treated as the term $\mathcal{K}(u)$, the bounded part of $\mathcal{I}$.}.   Then,
\begin{equation*}
\begin{split}
\sup_{t\in (t_0,t_1)}&\Big(\norm{(\phi_{\epsilon}u)(t)}^{2}_{L^{2}_{x,\theta}}\Big)+\int^{t_{1}}_{t_0}\int_{\mathbb{R}^{d}_{+}}\norm{\phi_{\epsilon}u}^{2}_{H^{s}_{\theta}}\;\mathrm{d}x\; \mathrm{d}\tau+\int^{t_{1}}_{t_0}\norm{(-\Delta_x)^{s_0/2}(\phi_{\epsilon}u)}^{2}_{L^{2}_{x,\theta}}\mathrm{d}\tau\\
&\qquad\leq C\left(\norm{(\phi_{\epsilon}u)(t_0)}^{2}_{L^{2}_{x,\theta}}+\norm{\phi_{\epsilon}u}^{2}_{L^{2}((t_0,t_1)\times\R^{d}\times\mathbb{S}^{d-1})}+\norm{F}^{2}_{L^{2}((t_0,t_1)\times\R^{d}\times\mathbb{S}^{d-1})}\right)\,.
\end{split}
\end{equation*}
The constant depends as $C:=C(d,s,s_0)$ with $s_0$ defined in \eqref{eqthm}.  In fact, since Theorem \ref{thm1} is applied to the whole space we may take $s_0=\frac{s/4}{2s+1}$.   Using Proposition \ref{prop4.3} to control the right side, we obtain that for any $\epsilon>0$
\begin{equation*}
\begin{split}
\sup_{t\in (t_0,t_1)}\Big(\norm{(\phi_{\epsilon}u)(t)}^{2}_{L^{2}_{x,\theta}}\Big)+\int^{t_{1}}_{t_0}&\norm{(-\Delta_x)^{s_0/2}(\phi_{\epsilon}u)}^{2}_{L^{2}_{x,\theta}}\mathrm{d}\tau\\
&\leq C(t_0,t_{1},m_0,\Phi_{in})\epsilon^{-2}\Big(1 + \norm{\theta_{d}\,g}^{2}_{L^{2}((t_0/2,T)\times\Gamma^{-})}\Big)\,.
\end{split}
\end{equation*}
In the last inequality we used the fact that $\| \phi'_{\epsilon} \|_{\infty}\sim \epsilon^{-1}$.  This proves the basis case.  Now assume the result to be valid for $l=k s_0$ and arbitrary $\epsilon>0$, that is
\begin{equation*}
\begin{split}
\sup_{t\in (t_0,t_1)}\Big(\norm{(-\Delta_{x})^{(k-1)s_0/2}(\phi_{\epsilon}u)(t)}^{2}_{L^{2}_{x,\theta}}\Big)&+\int^{t_{1}}_{t_0}\norm{(-\Delta_x)^{ks_0/2}(\phi_{\epsilon}u)}^{2}_{L^{2}_{x,\theta}}\mathrm{d}\tau\\
&\hspace{-1cm}\leq C(t_0,t_{1},m_0,\Phi_{in})\epsilon^{-2}\Big(1 + \norm{\theta_{d}\,g}^{2}_{L^{2}((t_0/2,T)\times\Gamma^{-})}\Big)\,,
\end{split}
\end{equation*}
and let us prove it for $l=(k+1)s_0$.  To this end differentiate \eqref{RTE-cutoff} by $(-\Delta_{x})^{ks_0/2}$ to obtain for $w^{ks_0}_{\epsilon}:=(-\Delta_{x})^{ks_0/2}(\phi_{\epsilon} u)$
\begin{equation}\label{interior-e1}
\partial_t w^{ks_0}_{\epsilon} +\theta \cdot \nabla_x w^{ks_0}_{\epsilon} =\mathcal{I}\left( w^{ks_0}_{\epsilon} \right)+\theta_{d}\,(-\Delta_{x})^{ks_0/2}(\phi^{\prime}_{\epsilon}\,u),\quad \text{in} \quad(0,\infty)\times\mathbb{R}^{d}\times\mathbb{S}^{d-1}\,.
\end{equation}
Again, we are led to
\begin{equation}\label{interior-e2}
\begin{split}
&\sup_{t\in (t_0,t_1)}\Big(\norm{w^{ks_0}_{\epsilon}(t)}^{2}_{L^{2}_{x,\theta}}\Big)+\int^{t_{1}}_{t_0}\norm{(-\Delta_x)^{s_0/2}w^{ks_0}_{\epsilon}}^{2}_{L^{2}_{x,\theta}}\mathrm{d}\tau\\
&\quad\leq C\left(\norm{w^{ks_0}_{\epsilon}(t_0)}^{2}_{L^{2}_{x,\theta}}+\norm{w^{ks_0}_{\epsilon}}^{2}_{L^{2}((t_0,t_1)\times\R^{d}\times\mathbb{S}^{d-1})}+\norm{F^{ks_0}}^{2}_{L^{2}((t_0,t_1)\times\R^{d}\times\mathbb{S}^{d-1})}\right)\,,
\end{split}
\end{equation}
where $F^{ks_0}=\theta_{d}\,(-\Delta_{x})^{ks_0/2}(\phi^{\prime}_{\epsilon}\,u)$.  In Proposition \ref{prop-app1} we prove that
\begin{equation*}
\norm{ F^{ks_0}}^{2}_{ L^{2}_{x}}   \leq \epsilon^{-2(1+ks_0)}C_{\phi}\Big(\norm{ \phi_{\epsilon/2}u}^{2}_{ L^{2}_{x} } + \norm{ (-\Delta_{x})^{ks_0/2}(\phi_{\epsilon/2}u)}^{2}_{ L^{2}_{x} } \Big)\,,
\end{equation*}
thus, we can control the latter two term in the right side of \eqref{interior-e2} with the induction hypothesis to obtain that 
\begin{equation*}
\begin{split}
\sup_{t\in (t_0,t_1)}&\Big(\norm{w^{ks_0}_{\epsilon}(t)}^{2}_{L^{2}_{x,\theta}}\Big) + \int^{t_{1}}_{t_0}\norm{(-\Delta_x)^{s_0/2}w^{ks_0}_{\epsilon}}^{2}_{L^{2}_{x,\theta}}\mathrm{d}\tau\\
&\leq C\norm{w^{ks_0}_{\epsilon}(t_0)}^{2}_{L^{2}_{x,\theta}} + \epsilon^{-2(1+ks_0)}C(t_0,t_{1},m_0,\Phi_{in},\phi)\Big(1 + \norm{\theta_{d}\,g}^{2}_{L^{2}((t_0/2,T)\times\Gamma^{-})}\Big)\,.
\end{split}
\end{equation*}
At this point one uses the argument leading to Proposition \ref{tangential-reg} to conclude that
\begin{equation*}
\begin{split}
\sup_{t\in (t_0,t_1)}\Big(\norm{w^{ks_0}_{\epsilon}(t)}^{2}_{L^{2}_{x,\theta}}\Big)& + \int^{t_{1}}_{t_0}\norm{(-\Delta_x)^{s_0/2}w^{ks_0}_{\epsilon}}^{2}_{L^{2}_{x,\theta}}\mathrm{d}\tau\\
&\leq\epsilon^{-2(1+ks_0)}C(t_0,t_{1},m_0,\Phi_{in},\phi)\Big(1 + \norm{\theta_{d}\,g}^{2}_{L^{2}((t_0/2,T)\times\Gamma^{-})}\Big)\,,
\end{split}
\end{equation*}
which proves the inductive step.
\end{proof}
In order to study the regularity in the variable $\theta\in\mathbb{S}^{d-1}$ we follow the spirit of \cite{alonso} which consists in studying the regularity in the projected variable $v\in\mathbb{R}^{d-1}$.  The main technical difficult lies in the treatments of weights that account for the geometry of the problem.  Additionally, derivation in angle is an operation that does not commutes with the equation which complicates the computations, yet, easily solved with a commutator identity of the type given in Proposition \ref{prop-app2}.

\smallskip
\noindent Take the cut-off RTE \eqref{RTE-cutoff} and apply the stereographic projection to it.  Recalling the formulas \eqref{2.4} and \eqref{2.5} it follows that $U^{\epsilon}_{\mathcal{J}}:=\frac{\phi_{\epsilon}u_{\mathcal{J}}}{\langle v \rangle^{d-1-2s}}$ satisfies 
\begin{equation}\label{4b.10}
\begin{split}
\partial_t U^{\epsilon}_{\mathcal{J}} + \theta(v) \cdot \nabla_x U^{\epsilon}_{\mathcal{J}} & = - D_0\pro{v}^{4s} (-\Delta_v)^{s}U^{\epsilon}_{\mathcal{J}} + c_{s,d}\,U^{\epsilon}_{\mathcal{J}} \\
&\qquad +\frac{\big[\mathcal{I}_{h}(\phi_{\epsilon}u)\big]_{\mathcal{J}}}{\langle v \rangle^{d-1-2s}}+\theta_{d}(v)\phi^{\prime}_{\epsilon}\,U_{\mathcal{J}}, \quad\text{in}\quad (0,T)\times\mathbb{R}^{d}\times\mathbb{R}^{d-1}\,.
\end{split}
\end{equation}
Multiply this equation by $(-\Delta_v)^{s} U^{\epsilon}_{\mathcal{J}}$ and integrate in all variables.  Using in the computation the Cauchy Schwarz inequality inequality in each term as follows
\begin{equation*}
\begin{split}
\int_{v}\Big| \theta(v) \cdot\nabla_x U^{\epsilon}_{\mathcal{J}} \times (-\Delta_v)^{s} U^{\epsilon}_{\mathcal{J}}\Big| &\leq \frac{2}{D_0}\int_{v}\Big| \theta(v) \cdot\nabla_x \frac{U^{\epsilon}_{\mathcal{J}}}{\langle v \rangle^{2s}}\Big|^{2} + \frac{D_{0}}{8}\int_{v}\Big|\langle v \rangle^{2s} (-\Delta_v)^{s} U^{\epsilon}_{\mathcal{J}}\Big|^{2}\\
&=C_{d,s}\int_{\theta}\big| \theta\cdot\nabla_x (\phi_{\epsilon}u)\big|^{2} + \frac{D_{0}}{8}\int_{v}\Big|\langle v \rangle^{2s} (-\Delta_v)^{s} U^{\epsilon}_{\mathcal{J}}\Big|^{2}\,,
\end{split}
\end{equation*}
one concludes that for $0 < t_0 < t < t_{1}$
\begin{equation*}
\begin{split}
&\big\| (-\Delta_v)^{s/2}U^{\epsilon}_{\mathcal{J}}(t) \big\|^{2}_{L^{2}_{x,v}}+\tfrac{D_0}{2} \int^{t}_{t_0} \big\| \pro{v}^{2s}(-\Delta_v)^{s}U^{\epsilon}_{\mathcal{J}} \big\|^{2}_{L^{2}_{x,v}}\text{d}\tau \leq \big\| (-\Delta_v)^{s/2}U^{\epsilon}_{\mathcal{J}}(t_0) \big\|^{2}_{L^{2}_{x,v}}\\
&\qquad\qquad+ C_{d,s}\int^{t}_{t_0}\Big(\|\phi_{\epsilon}u \|^{2}_{L^{2}_{x,\theta}} + \| \mathcal{I}_{h}(\phi_{\epsilon}u)\|^{2}_{L^{2}_{x,\theta}} + \|\theta\cdot\nabla_{x}(\phi_{\epsilon}u) \|^{2}_{L^{2}_{x,\theta}} + \|\theta_{d}\phi'_{\epsilon}u \|^{2}_{L^{2}_{x,\theta}}\Big) \text{d}\tau\,.
\end{split}
\end{equation*}
Recall that $\mathcal{I}_{h}$ is a bounded operator and given the interior spatial regularity on Proposition \ref{prop4b.5} we are led to
\begin{equation}\label{interior-theta-e1}
\begin{split}
&\big\| (-\Delta_v)^{s/2}U^{\epsilon}_{\mathcal{J}}(t) \big\|^{2}_{L^{2}_{x,v}} + \tfrac{D_0}{2} \int^{t}_{t_0} \big\| \pro{v}^{2s}(-\Delta_v)^{s}U^{\epsilon}_{\mathcal{J}} \big\|^{2}_{L^{2}_{x,v}}\text{d}\tau\\
&\hspace{1cm}\leq \big\| (-\Delta_v)^{s/2}U^{\epsilon}_{\mathcal{J}}(t_0) \big\|^{2}_{L^{2}_{x,v}} + C_{\epsilon}(t,t_0,m_0,\Phi_{in},\phi)\Big(1 + \norm{\theta_{d}\,g}^{2}_{L^{2}((t_0/2,t_1)\times\Gamma^{-})}\Big)\,.
\end{split}
\end{equation}
Now observe that
\begin{equation*}
\begin{split}
\int_{\mathbb{R}^{d-1}}\big|(-\Delta_{v})^{s/2}U^{\epsilon}_{\mathcal{J}} \big|^{2} \text{d}v &= \int_{\mathbb{R}^{d-1}} U^{\epsilon}_{\mathcal{J}}\;(-\Delta_{v})^{s}U^{\epsilon}_{\mathcal{J}}\,\text{d}v \\
&\leq \bigg(\int_{\mathbb{R}^{d-1}}\bigg| \frac{U^{\epsilon}_{\mathcal{J}}}{\langle v \rangle^{2s}}\bigg|^{2}\,\text{d}v\bigg)^{\frac{1}{2}}\bigg(\int_{\mathbb{R}^{d-1}} \big| \langle v \rangle^{2s}(-\Delta_{v})^{s}U^{\epsilon}_{\mathcal{J}} \big|^{2}\text{d}v\bigg)^{\frac{1}{2}}\,,
\end{split}
\end{equation*}	
which is equivalent to
\begin{equation*}
\| (-\Delta_{v})^{s/2}U^{\epsilon}_{\mathcal{J}} \|^{2}_{L^{2}_{v}} \leq C_{d}\| \phi_{\epsilon} u \|_{L^{2}_{\theta}}\| \langle v \rangle^{2s}(-\Delta_{v})^{s}U^{\epsilon}_{\mathcal{J}} \|_{L^{2}_{v}}\,.
\end{equation*}
This is a spherical analog to the classical Sobolev interpolation in the Euclidean plane.  As a consequence, after using Proposition \ref{prop4b.5} to control the $L^{2}_{x,\theta}$-norm of $\phi_{\epsilon}u$ we can update \eqref{interior-theta-e1} to
\begin{equation*}
\begin{split}
&\big\| (-\Delta_v)^{s/2}U^{\epsilon}_{\mathcal{J}}(t) \big\|^{2}_{L^{2}_{x,v}} + \frac{C(t_0,m_0,\Phi_{in})}{ 1 + \norm{g}^{2}_{L^{2}((t_0/2,t_1)\times\Gamma^{-})} } \int^{t}_{t_0} \big\| (-\Delta_v)^{s/2}U^{\epsilon}_{\mathcal{J}} \big\|^{4}_{L^{2}_{x,v}}\text{d}\tau\\
&\hspace{1.5cm}\leq \big\| (-\Delta_v)^{s/2}U^{\epsilon}_{\mathcal{J}}(t_0) \big\|^{2}_{L^{2}_{x,v}} + C_{\epsilon}(t,t_0,m_0,\Phi_{in},\phi)\Big(1 + \norm{\theta_{d}\,g}^{2}_{L^{2}((t_0/2,t_1)\times\Gamma^{-})}\Big)\,.
\end{split}
\end{equation*}
From here we can argue as before, for instance as in the proof of Proposition \ref{prop4.3} with $\omega=2$, to obtain that
\begin{equation}\label{interior-theta-e2}
\begin{split}
\sup_{t\in(t_0,t_{1})}\big\| (-\Delta_v)^{s/2}U^{\epsilon}_{\mathcal{J}}(t) \big\|^{2}_{L^{2}_{x,v}} &+ \int^{t_{1}}_{t_0} \big\| \pro{v}^{2s}(-\Delta_v)^{s}U^{\epsilon}_{\mathcal{J}} \big\|^{2}_{L^{2}_{x,v}}\text{d}\tau\\
&\leq C_{\epsilon}(t_{1},t_0,m_0,\Phi_{in},\phi)\Big(1 + \norm{\theta_{d}\,g}^{2}_{L^{2}((t_0/2,t_1)\times\Gamma^{-})}\Big)\,.
\end{split}
\end{equation}
Before proving interior angular regularization, we have the tools to prove interior space-time regularization as shown in the following proposition.
\begin{prop}[Space-time interior regularization]\label{prop4b.6} Let $u\in L^{2}((t_0,t_1)\times \R^{d}_{+}\times \mathbb{S}^{d-1})$ be a solution of (\ref{eq:1}). Then, for any $j_1,j_2 \in \N$ it follows that
\begin{equation}\label{space-time-regularization}
\begin{split}
\sup_{t\in(2t_0,t_{1})}\big\| (-\Delta_x)^{j_1\!\frac{s_0}{2}}\partial^{j_2}_t \big(\phi_{\epsilon}u) &\big\|^{2}_{L^{2}_{x,\theta}} + \int^{t_1}_{t_0}\big\| (-\Delta_x)^{(1 + j_{1})\frac{s_0}{2}}\partial^{j_2}_t \big(\phi_{\epsilon}u) \big\|^{2}_{L^{2}_{x,\theta}}\\
&\leq C_{\epsilon}(t_{1},t_0,m_0,\Phi_{in},\phi)\Big(1 + \norm{\theta_{d}\,g}^{2}_{L^{2}((t_0/2,t_1)\times\Gamma^{-})}\Big)\,,
\end{split}
\end{equation}
for any $0<2t_0\leq t_{1}$.
\end{prop}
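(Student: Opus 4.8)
\textit{Strategy.} The mechanism I would exploit is that, modulo bounded operators, equation \eqref{RTE-cutoff} lets one time derivative be exchanged for one spatial derivative or one power of the singular angular operator: using \eqref{2.5},
\[
\partial_t(\phi_\epsilon u) = \big[-\theta\cdot\nabla_x - D(-\Delta_\theta)^s + c_{s,d}I + \mathcal I_h\big](\phi_\epsilon u) + \theta_d\,\phi'_\epsilon\,u .
\]
I would prove \eqref{space-time-regularization} by induction on $j_2$, keeping $j_1\in\N$ and $\epsilon\in(0,1)$ free, and carry along a companion angular statement $(\mathrm Q_{j_2})$ bounding $\sup_t\|(-\Delta_x)^{j_1 s_0/2}(-\Delta_v)^{s/2}\partial_t^{j_2}U^\epsilon_{\mathcal J}(t)\|^2_{L^2_{x,v}}$ together with $\int_{t_0}^{t_1}\|(-\Delta_x)^{j_1 s_0/2}\pro{v}^{2s}(-\Delta_v)^s\partial_t^{j_2}U^\epsilon_{\mathcal J}\|^2_{L^2_{x,v}}\,\mathrm{d}\tau$ by the same right-hand side. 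Writing $(\mathrm P_{j_2})$ for \eqref{space-time-regularization}, the base case $j_2=0$ is Proposition \ref{prop4b.5} for $(\mathrm P_0)$, while $(\mathrm Q_0)$ is the $(-\Delta_x)^{j_1 s_0/2}$-weighted version of the computation leading to \eqref{interior-theta-e2} (apply $(-\Delta_x)^{j_1 s_0/2}$ to \eqref{4b.10}, test against $\pro{v}^{2s}(-\Delta_v)^s$ of the weighted unknown, and absorb the transport and cut-off source terms with $(\mathrm P_0)$ and the fractional Leibniz bound of Proposition \ref{prop-app1}).

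\textit{Inductive step, part one.} Assume $(\mathrm P_m)$ and $(\mathrm Q_m)$ for all $j_1$ and all $m\le j_2-1$, and set $W:=\partial_t^{j_2}(\phi_\epsilon u)$. Since $\partial_t$ commutes with $\theta\cdot\nabla_x$ and with $\mathcal I$ and $\phi'_\epsilon$ does not depend on $t$, the function $W$ solves the RTE on $\R^{d}$ (data $g=0$) with source $F:=\theta_d\,\phi'_\epsilon\,\partial_t^{j_2}u$. Applying $(-\Delta_x)^{j_1 s_0/2}$ to this equation — no commutator appears, both operators acting only in $x$ — and then repeating verbatim the proof of Proposition \ref{prop4b.5} (Theorem \ref{thm1} in the whole space, the energy identities \eqref{eq2.4}--\eqref{eq2.3-1}, Proposition \ref{prop-app1} for the cut-off factor, and the Bernoulli/time-averaging argument of Proposition \ref{prop4.3} to dispose of the initial value $\|(-\Delta_x)^{j_1 s_0/2}W(\tau)\|_{L^2_{x,\theta}}$) reduces $(\mathrm P_{j_2})$ to bounding the time integrals of $\|(-\Delta_x)^{j_1 s_0/2}F\|^2_{L^2_{x,\theta}}$ and of $\|(-\Delta_x)^{j_1 s_0/2}W\|^2_{L^2_{x,\theta}}$, i.e. of every occurrence of $\partial_t^{j_2}$ of the solution on the right-hand side.

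\textit{Inductive step, part two.} On $\operatorname{supp}\phi'_\epsilon\subset\{\epsilon\le x_d\le 2\epsilon\}$ one has $u=\phi_{\epsilon/2}u$, so I lower the number of time derivatives by one via \eqref{RTE-cutoff} for $\phi_{\epsilon/2}u$:
\[
\partial_t^{j_2}(\phi_{\epsilon/2}u) = \partial_t^{j_2-1}\big[-\theta\cdot\nabla_x - D(-\Delta_\theta)^s + c_{s,d}I + \mathcal I_h\big](\phi_{\epsilon/2}u) + \theta_d\,\phi'_{\epsilon/2}\,\partial_t^{j_2-1}(\phi_{\epsilon/4}u).
\]
Each term now carries only $j_2-1$ time derivatives: the transport term, after writing $\partial_{x_i}=R_i(-\Delta_x)^{1/2}$ with $R_i$ a bounded multiplier and absorbing $(-\Delta_x)^{1/2}$ into $(-\Delta_x)^{\lceil 1/s_0\rceil s_0/2}$, is controlled by $(\mathrm P_{j_2-1})$; the two bounded terms $c_{s,d}I+\mathcal I_h$ by $(\mathrm P_{j_2-1})$ as well; the singular term equals, by \eqref{2.4} and the Jacobian identities, a fixed constant times $\pro{v}^{2s}(-\Delta_v)^s\partial_t^{j_2-1}U^{\epsilon/2}_{\mathcal J}$ in the projected picture, hence is controlled by $(\mathrm Q_{j_2-1})$; and the last term by $(\mathrm P_{j_2-1})$ with cut-off radius $\epsilon/4$. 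Applying $(-\Delta_x)^{j_1 s_0/2}$ and invoking Proposition \ref{prop-app1} once more for the factor $\theta_d\phi'_\epsilon$ closes the estimate and proves $(\mathrm P_{j_2})$; finally $(\mathrm Q_{j_2})$ follows by inserting $(-\Delta_x)^{j_1 s_0/2}\partial_t^{j_2}$ into \eqref{4b.10} and running the derivation of \eqref{interior-theta-e2} one more time, now with the already-established $(\mathrm P_{j_2})$ absorbing the $x$-transport and source contributions. Renaming $2t_0\mapsto t_0$ gives the stated form.

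\textit{Main difficulty.} No individual inequality is new: each is a variant of Proposition \ref{prop4b.5}, of the derivation of \eqref{interior-theta-e2}, or of Proposition \ref{prop-app1}. The real work is arranging the triple induction — spatial order $j_1$, angular regularity (the $(\mathrm Q)$ family), number of time derivatives $j_2$ — so that it closes, and keeping track of the $\epsilon$-losses: every use of the equation halves the cut-off radius and produces a factor $\|\phi'_\epsilon\|_\infty\sim\epsilon^{-1}$ (and more through the fractional Leibniz bound), so one must verify the recursion terminates with a finite power $\epsilon^{-N(j_1,j_2)}$ as claimed. A secondary technical annoyance is the mismatch between the integer-order operator $\theta\cdot\nabla_x$ coming from the equation and the fractional scale $(-\Delta_x)^{s_0/2}$ in which the estimates are written, which is what forces rounding $1/s_0$ up to an integer and a repeated appeal to Proposition \ref{prop-app1}.
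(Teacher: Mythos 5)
Your proposal follows essentially the same route as the paper: induction on the number of time derivatives, trading each $\partial_t$ for transport plus scattering via the cut-off equation, invoking Theorem \ref{thm1} in the whole space with $g=0$ and source $\theta_{d}\phi'_{\epsilon}\partial^{j}_{t}(\phi_{\epsilon/2}u)$, and controlling the singular angular contribution through an \eqref{interior-theta-e2}-type estimate for $\partial^{j}_{t}U^{\epsilon}_{\mathcal{J}}$ — your companion family $(\mathrm Q_{j_2})$ is exactly the paper's re-derivation of \eqref{interior-theta-e2} for $y^{j}_{\epsilon}$, and the $\epsilon$-halving bookkeeping via Proposition \ref{prop-app1} matches as well. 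The argument is correct and no genuinely different ideas are introduced.
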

\begin{proof}
Recall that
\begin{equation*}
\partial_t \left( \phi_{\epsilon} u\right)=\mathcal{I}\left( \phi_{\epsilon} u\right)+\theta_{d}\,\phi^{\prime}_{\epsilon}\,u - \theta \cdot \nabla_x \left( \phi_{\epsilon} u\right),\quad \text{in} \quad(0,\infty)\times\mathbb{R}^{d}\times\mathbb{S}^{d-1}\,.
\end{equation*}
Therefore,
\begin{equation*}
\int^{t_{1}}_{t_0}\big\| \partial_t \left( \phi_{\epsilon} u\right) \big\|^{2}_{L^{2}_{x,\theta}}\leq 2\int^{t_{1}}_{t_0}\big\| \mathcal{I}\left( \phi_{\epsilon} u\right)\big\|^{2}_{L^{2}_{x,\theta}} + 2\int^{t_{1}}_{t_0}\big\| \theta_{d}\,\phi^{\prime}_{\epsilon}\,u - \theta \cdot \nabla_x \left( \phi_{\epsilon} u\right) \big\|^{2}_{L^{2}_{x,\theta}}\,.
\end{equation*}	
Using Proposition \ref{prop4b.5} one controls the second term in the right side
\begin{equation*}
\int^{t_{1}}_{t_0}\big\| \theta_{d}\,\phi^{\prime}_{\epsilon}\,u - \theta \cdot \nabla_x \left( \phi_{\epsilon} u\right) \big\|^{2}_{L^{2}_{x,\theta}}\text{d}\tau \leq C_{\epsilon}\Big(1 + \norm{\theta_{d}\,g}^{2}_{L^{2}((t_0/2,t_1)\times\Gamma^{-})}\Big)\,.
\end{equation*} 	
In addition, thanks to formulas \eqref{2.4} and \eqref{2.5}
\begin{equation*}
\big\| \mathcal{I}(\phi_{\epsilon} u ) \|^{2}_{L^{2}_{\theta}} \sim b(1)\big\| \langle v \rangle^{2s}(-\Delta_{v})^{s}U^{\epsilon}_{\mathcal{J}} \big\|^{2}_{L^{2}_{v}} +  \|\phi_{\epsilon} u\|^{2}_{L^{2}_{v}}\,.
\end{equation*}
Thus, estimate \eqref{interior-theta-e2} leads to the bound
\begin{equation*}
\int^{t_{1}}_{t_0}\big\| \mathcal{I}(\phi_{\epsilon} u ) \|^{2}_{L^{2}_{\theta}}\text{d}\tau \leq C_{\epsilon}\Big(1 + \norm{\theta_{d}\,g}^{2}_{L^{2}((t_0/2,t_1)\times\Gamma^{-})}\Big)\,,
\end{equation*}	
which in turn leads to
\begin{equation}\label{tempe1}
\int^{t_{1}}_{t_0}\big\| \partial_t \left( \phi_{\epsilon} u\right) \big\|^{2}_{L^{2}_{x,\theta}}\leq C_{\epsilon}\Big(1 + \norm{\theta_{d}\,g}^{2}_{L^{2}((t_0/2,t_1)\times\Gamma^{-})}\Big)\,,\quad \epsilon>0\,.
\end{equation}
Let us prove that $y^{j}_{\epsilon}:=\partial^{j}_{t}(\phi_{\epsilon} u)$, with $j=1,2,\cdots$, is smooth in the spatial variable.  Time differentiation commutes with the RTE, as a consequence, note that 
\begin{equation}\label{RTE-time-j}
\partial_t y^{j}_{\epsilon} +\theta \cdot \nabla_x y^{j}_{\epsilon}  = \mathcal{I}\left(  y^{j}_{\epsilon} \right)+\theta_{d}\,\phi^{\prime}_{\epsilon}\,y^{j}_{\epsilon/2}\,.
\end{equation}
Then, we can invoke Theorem \ref{thm1} applied in the whole space, with zero boundary $g=0$, and with a source $F^{j}_{\epsilon}:=\theta_{d}\,\phi^{\prime}_{\epsilon}y^{j}_{\epsilon/2}$.  As a consequence,
\begin{equation}\label{EE-temporal}
\begin{split}
\sup_{t\in (t_0,t_1)}\Big(\norm{y^{j}_{\epsilon}}^{2}_{L^{2}_{x,\theta}}\Big)&+\int^{t_{1}}_{t_0}\norm{(-\Delta_x)^{s_0/2}y^{j}_{\epsilon}}^{2}_{L^{2}_{x,\theta}}\mathrm{d}\tau\leq C\Big( \norm{y^{j}_{\epsilon}(t_0)}^{2}_{L^{2}_{x,\theta}}\\
&\qquad\qquad +\norm{y^{j}_{\epsilon}}^{2}_{L^{2}((t_0,t_1)\times\R^{d}\times\mathbb{S}^{d-1})}+\norm{F^{j}_{\epsilon}}^{2}_{L^{2}((t_0,t_1)\times\R^{d}\times\mathbb{S}^{d-1})}\Big)\,,
\end{split}
\end{equation}
where the constant depends as $C:=C(d,s,s_0)$ with $s_0$ defined in \eqref{eqthm}. 
 
\smallskip
\noindent  Since \eqref{tempe1} is valid for any $\epsilon>0$, one concludes that for $j=1$ it holds that
\begin{equation*}
\norm{y^{1}_{\epsilon}}^{2}_{L^{2}((t_0,t_1)\times\R^{d}\times\mathbb{S}^{d-1})}+\norm{F^{1}_{\epsilon}}^{2}_{L^{2}((t_0,t_1)\times\R^{d}\times\mathbb{S}^{d-1})}\leq C_{\epsilon}\Big(1 + \norm{\theta_{d}\,g}^{2}_{L^{2}((t_0/2,t_1)\times\Gamma^{-})}\Big)\,,
\end{equation*}
and consequently,
\begin{equation*}
\begin{split}
\sup_{t\in (t_0,t_1)}\Big(\norm{y^{1}_{\epsilon}}^{2}_{L^{2}_{x,\theta}}\Big)&+\int^{t_{1}}_{t_0}\norm{(-\Delta_x)^{s_0/2}y^{1}_{\epsilon}}^{2}_{L^{2}_{x,\theta}}\mathrm{d}\tau\\
&\qquad \leq C\norm{y^{1}_{\epsilon}(t_0)}^{2}_{L^{2}_{x,\theta}}  + C_{\epsilon}\Big(1 + \norm{\theta_{d}\,g}^{2}_{L^{2}((t_0/2,t_1)\times\Gamma^{-})}\Big)\,,
\end{split}
\end{equation*}
from which the following estimate is readily obtained for, say, $0<\frac{3}{2}t_{0}\leq t_{1}$
\begin{equation}\label{temporale2}
\begin{split}
\sup_{t\in (\frac{3}{2}t_0,t_1)}\Big(\norm{y^{1}_{\epsilon}}^{2}_{L^{2}_{x,\theta}}\Big) +  \int^{t_{1}}_{t_0}\norm{(-\Delta_x)^{s_0/2}y^{1}_{\epsilon}}^{2}_{L^{2}_{x,\theta}}\mathrm{d}\tau \leq C_{\epsilon}\Big(1 + \norm{\theta_{d}\,g}^{2}_{L^{2}((t_0/2,t_1)\times\Gamma^{-})}\Big)\,.
\end{split}
\end{equation} 
Estimate \eqref{temporale2} works as the base case for an induction argument along the lines of the proof of Proposition \ref{prop4b.5} which proves estimate \eqref{space-time-regularization} for $j_{1}\in\mathbb{N}$ and $j_{2}=1$.

\smallskip
\noindent
For the general case $j_{2}\geq1$ argue again by induction, using estimate \eqref{space-time-regularization} for $j_{1}\in\mathbb{N}$ and $j_{2}=1$ as the base case and with induction hypothesis given by the same estimate for $j_{1}\in\mathbb{N}$ and $j_{2}=j$.  In order to prove \eqref{space-time-regularization} for $j+1$, note that thanks to the induction hypothesis $\nabla_{x}y^{j}_{\epsilon}$ and $y^{j}_{\epsilon/2}$ belong to $L^{2}_{t,x,\theta}$ with the right control with respect to the boundary $g$.  Additionally, all $y^{j}_{\epsilon}$ satisfy the same equation \eqref{RTE-time-j}, therefore, we can redo the steps that proved \eqref{interior-theta-e2} to obtain that
\begin{equation*}
\begin{split}
\sup_{t\in(t_0,t_{1})}\big\| (-\Delta_v)^{s/2}Y^{j,\epsilon}_{\mathcal{J}}(t) \big\|^{2}_{L^{2}_{x,v}} &+ \int^{t_{1}}_{t_0} \big\| \pro{v}^{2s}(-\Delta_v)^{s}Y^{j,\epsilon}_{\mathcal{J}} \big\|^{2}_{L^{2}_{x,v}}\text{d}\tau\\
&\leq C_{\epsilon}(t_{1},t_0,m_0,\Phi_{in},\phi)\Big(1 + \norm{\theta_{d}\,g}^{2}_{L^{2}((t_0/2,t_1)\times\Gamma^{-})}\Big)\,,
\end{split}
\end{equation*}
where $Y^{j,\epsilon}_{\mathcal{J}} = \frac{\big[ y^{j}_{\epsilon}\big]_{\mathcal{J}}}{\langle v \rangle^{d-1-2s}}$.  This implies thanks to the argument leading to \eqref{tempe1} that
\begin{equation*}
\int^{t_{1}}_{t_0}\big\| y^{j+1}_{\epsilon} \big\|^{2}_{L^{2}_{x,\theta}}\leq C_{\epsilon}\Big(1 + \norm{\theta_{d}\,g}^{2}_{L^{2}((t_0/2,t_1)\times\Gamma^{-})}\Big)\,,\quad \epsilon>0\,.
\end{equation*}
Invoking again Theorem \ref{thm1} we can use the energy estimate \eqref{EE-temporal} with $j+1$ in place of $j$.  From this point we conclude as done for $y^{1}_{\epsilon}$.
\end{proof}
\begin{prop}[Interior angular regularity]\label{prop4b.6}
Let $u\in L^{2}\left( (t_0,t_1)\times \R^{d}_{+} \times \mathbb{S}^{d-1}\right)$ be a solution of the RTE \eqref{eq:1}.  Assume in addition that the scattering kernel \eqref{b-scattering} satisfies for some integer $N_0\geq 1$
\begin{equation}\label{4b.8}
h(z)=\dfrac{\tilde{b}(z)}{(1-z)^{1 + s}} \in \mathcal{C}^{N_0}\big( [-1,1] \big).
\end{equation}
Then, for any $j_{1},\,j_{2}\in\mathbb{N}$ such that $0 \leq \frac{(j_{2} - 1) s}{2} \leq \lfloor N_0 \rfloor$, and $\epsilon\in(0,1)$ it follows that
\begin{equation}\label{4b.9}
\begin{split}
\sup_{t\in(t_0,t_{1})}\norm{(-\Delta_{x})^{\frac{j_{1}}{2}}(-\Delta_v)^{j_{2}\frac{s}{2}}\,&U^{\epsilon}_{\mathcal{J}}}^{2}_{L^{2}_{x,v}}  + \int^{t_{1}}_{t_0} \big\| \pro{v}^{2s}(-\Delta_{x})^{\frac{j_{1}}{2}}(-\Delta_v)^{(1+ j_{2})\frac{s}{2}}\,U^{\epsilon}_{\mathcal{J}} \big\|^{2}_{L^{2}_{x,v}}\text{d}\tau\\
&\!\!\! \leq  C_{\epsilon,j_1,j_2}(t_0,t_{1},m_0,\Phi_{in},\phi)\Big(1 + \norm{\theta_{d}\,g}^{2}_{L^{2}((t_0/2,t_1)\times\Gamma^{-})}\Big)\,,
\end{split}
\end{equation}	
where we recall that $U^{\epsilon}_{\mathcal{J}}=\frac{\phi_{\epsilon}\,u_{\mathcal{J}}}{\pro{v}^{d-1-2s}}$.	
\end{prop}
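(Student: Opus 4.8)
The plan is to establish \eqref{4b.9} by a double induction, the outer loop on the angular order $j_{2}$ and, for each fixed $j_{2}$, an inner loop on the spatial order $j_{1}$; the latter is essentially automatic because the operators $(-\Delta_{x})^{j_{1}/2}$ commute with every term of the projected cut-off equation \eqref{4b.10}, so that $V:=(-\Delta_{x})^{j_{1}/2}U^{\epsilon}_{\mathcal{J}}$ solves \eqref{4b.10} with the source replaced by its $(-\Delta_{x})^{j_{1}/2}$-derivative. The base layers are already at hand: the case $j_{2}=0$ (all $j_{1}$) is the interior spatial regularity of Proposition~\ref{prop4b.5}, after converting the $L^{2}_{\theta}$-norm into the $L^{2}_{v}$-norm through the stereographic Jacobian and recalling $U^{\epsilon}_{\mathcal{J}}=\phi_{\epsilon}u_{\mathcal{J}}/\pro{v}^{d-1-2s}$ (a non-integer spatial order $j_{1}/2$ is obtained from the orders $\ell s_{0}/2$ of Proposition~\ref{prop4b.5}, $s_{0}<1$, by taking $\ell$ large); the case $j_{2}=1$, $j_{1}=0$, is exactly \eqref{interior-theta-e2}; and the case $j_{2}=1$, general $j_{1}$, follows by differentiating \eqref{4b.10} in $x$ and repeating the energy computation that produced \eqref{interior-theta-e2}, the source terms $\theta_{d}(v)\phi'_{\epsilon}U_{\mathcal{J}}$ and $[\mathcal{I}_{h}(\phi_{\epsilon}u)]_{\mathcal{J}}/\pro{v}^{d-1-2s}$ being controlled by Proposition~\ref{prop4b.5} and the space-time interior regularization established above.

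For the inductive step $j_{2}\to j_{2}+1$ (all $j_{1}$, within the admissible range $\tfrac{(j_{2}-1)s}{2}\le\lfloor N_{0}\rfloor$) I would test the equation for $V$ against $(-\Delta_{v})^{j_{2}s}V$ and integrate in $(x,v)$. The time term gives $\tfrac12\tfrac{d}{dt}\|(-\Delta_{v})^{j_{2}s/2}V\|^{2}_{L^{2}_{x,v}}$, precisely the quantity in the supremum of \eqref{4b.9}; the dissipative term $-D_{0}\pro{v}^{4s}(-\Delta_{v})^{s}V$, after moving a factor $(-\Delta_{v})^{(j_{2}-1)s/2}$ across by self-adjointness and commuting it past the smooth weight $\pro{v}^{4s}$ via the commutator identity of Proposition~\ref{prop-app2}, produces the good term $D_{0}\|\pro{v}^{2s}(-\Delta_{v})^{(1+j_{2})s/2}V\|^{2}_{L^{2}_{x,v}}$ up to remainders that are strictly lower order in $v$ (here $s\le1$ is used) and are absorbed by Young's inequality. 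The transport term $\int\theta(v)\cdot\nabla_{x}V\,(-\Delta_{v})^{j_{2}s}V$ is treated as in the derivation of \eqref{interior-theta-e2}: distribute the $v$-derivatives with Proposition~\ref{prop-app2}, apply Cauchy--Schwarz, absorb the top weighted $v$-derivative into the dissipation, and bound the remainder --- which carries at most one fewer half-power $(-\Delta_{v})^{s/2}$ at the cost of one extra $\nabla_{x}$ --- by the induction hypothesis at level $(j_{1}+1,j_{2}-1)$ together with Proposition~\ref{prop4b.5}. The localization term $\theta_{d}(v)\phi'_{\epsilon}U_{\mathcal{J}}$ is supported in $\{\epsilon\le x_{d}\le2\epsilon\}$, where $u=\phi_{\epsilon/2}u$, so its weighted $v$-derivatives are controlled, up to a fixed negative power of $\epsilon$, by the induction hypothesis at level $j_{2}$ with cut-off $\epsilon/2$. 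Finally, the $\mathcal{I}_{h}$ term is the only place where the smoothness assumption \eqref{4b.8} enters: the $C^{N_{0}}$ regularity of $h$ lets $\mathcal{I}_{h}$ gain $v$-regularity up to an order controlled by $N_{0}$, which is exactly why $(j_{1},j_{2})$ is restricted by $\tfrac{(j_{2}-1)s}{2}\le\lfloor N_{0}\rfloor$; the relevant mapping estimate follows from the kernel analysis in \cite{alonso} (see also the appendix). Collecting all contributions yields a differential inequality of Bernoulli type for $\|(-\Delta_{v})^{j_{2}s/2}V(t)\|^{2}_{L^{2}_{x,v}}$; integrating it with the comparison argument of Proposition~\ref{prop4.3} and starting from a slightly later time --- to dispose of $\|(-\Delta_{v})^{j_{2}s/2}V(t_{0})\|$, which is finite on every interval $(t_{0}',t_{1})$ with $t_{0}'>t_{0}$ by the induction and the time-averaging of Corollary~\ref{cor3.3} --- gives \eqref{4b.9}.

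The main obstacle is the commutator bookkeeping: one must check that every commutator generated when the fractional angular Laplacian $(-\Delta_{v})^{j_{2}s/2}$ is slid past the degenerate geometric weight $\pro{v}^{4s}$, past the rational coefficients $\theta(v)=\mathcal{J}(v)$ in the transport term, and through the operator $\mathcal{I}_{h}$, is genuinely of strictly lower $v$-order --- so that it closes on the induction hypothesis --- and, more delicately, that the $\pro{v}$-weights line up so that only the norms $\|\pro{v}^{2s}(-\Delta_{v})^{(1+j_{2})s/2}U^{\epsilon}_{\mathcal{J}}\|_{L^{2}_{x,v}}$, and no higher weights, are propagated along the iteration. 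The unbounded weight $\pro{v}^{4s}\to\infty$, which reflects the loss of compactness introduced by the stereographic projection, is what makes this nontrivial, and Proposition~\ref{prop-app2} is the tool designed precisely for it; the second delicate point is the quantitative regularization of $\mathcal{I}_{h}$ as a function of $N_{0}$, which governs the admissible range of $(j_{1},j_{2})$.
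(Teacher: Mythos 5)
Your proposal is correct and follows essentially the same route as the paper: an induction on the angular order in which the (spatially differentiated) projected cut-off equation is tested against a fractional angular Laplacian, with the weight $\pro{v}^{4s}$, the coefficients $\theta(v)=\mathcal{J}(v)$ and the localization term handled through the commutator estimate of Proposition~\ref{prop-app2} together with $\phi_{\epsilon/2}$, the $\mathcal{C}^{N_0}$ smoothness of $h$ controlling $\mathcal{I}_h$ (which is exactly where the restriction $\tfrac{(j_2-1)s}{2}\le N_0$ enters), and the top-order terms absorbed into the dissipation. The only cosmetic differences are that the paper applies $(-\Delta_{x})^{j_1/2}\partial^{\kappa}_{v}(1-\Delta_{v})^{\alpha/2}$ to the equation first and then tests against $(-\Delta_{v})^{s}V^{j,\epsilon}$ (so the commutators appear as explicit sources $F^{j,\epsilon}_{1},\dots,F^{j,\epsilon}_{4}$ and the final step is a plain absorption with $\delta=D_0/2$), whereas you test against $(-\Delta_{v})^{j_2 s}V$ and redistribute by self-adjointness and close with a Bernoulli-type comparison as in the base estimate \eqref{interior-theta-e2}.
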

\begin{proof}
The case $j_{1}\in\mathbb{N}$ and $j_{2}=1$ is clear from the argument leading to estimate \eqref{interior-theta-e2} after spatial differentiation of the equation given the smoothness of the spatial variable Proposition \ref{prop4b.5}.  For the general case assume, as inductive hypothesis, that \eqref{4b.9} for $j_{1}\in\mathbb{N}$ and $j_{2}=j$.  Apply the operator $(-\Delta_{x})^{\frac{j_{1}}{2}}\partial^{\kappa}_{v}(1-\Delta_{v})^{\frac{\alpha}{2}}$ to the projected RTE \eqref{4b.10} with $\kappa$ multi-index and $\alpha\in(0,1)$ such that $|\kappa| + \alpha = j\frac{s}{2}$.  Then for $V^{j,\epsilon}_{\mathcal{J}} = (-\Delta_{x})^{\frac{j_{1}}{2}}\partial^{\kappa}_{v}(1-\Delta_{v})^{\frac{\alpha}{2}}U^{\epsilon}_{\mathcal{J}}$ it holds that 
\begin{equation}\label{equation-angular-regularization}
\begin{split}
\partial_t V^{j,\epsilon}_{\mathcal{J}}  = - D_0\pro{v}^{4s} (-\Delta_v)^{s}V^{j,\epsilon}_{\mathcal{J}} + c_{s,d}\,V^{j,\epsilon}_{\mathcal{J}} + F^{j,\epsilon}, \quad\text{in}\quad (0,T)\times\mathbb{R}^{d}\times\mathbb{R}^{d-1}\,,
\end{split}
\end{equation}
where $F^{j,\epsilon} = \sum^{4}_{i=1}F^{j,\epsilon}_{i}$.  Let us estimate each of these sources as we define them starting with the angular fractional Laplacian commutation term
\begin{align*}
F^{j,\epsilon}_{1} :=\sum_{|m|\leq |\kappa|-1}\bigg(\begin{array}{c}\kappa\\m\end{array}\bigg)\Big( \big(\partial^{\kappa-m}_{v}\langle v \rangle^{4s}\big)&\times(-\Delta_{v})^{s}(-\Delta_{x})^{\frac{j_{1}}{2}}\partial^{m}_{v}(1-\Delta_{v})^{\frac{\alpha}{2}}U^{\epsilon}_{\mathcal{J}} \\
&+ \mathcal{R}^{1}_{m}\big( (-\Delta_{v})^{s}(-\Delta_{x})^{\frac{j_{1}}{2}}\partial^{m}_{v}U^{\epsilon}_{\mathcal{J}} \big) \Big)\,. 
\end{align*}
In this identity we used first the product rule of classical differentiation to handle the operator $\partial^{\kappa}_{v}$ and, then, Proposition \ref{prop-app2} to commute the fractional differentiation.  The operator $\mathcal{R}^{1}_{m}$ satisfies for all $|m|+\alpha\leq |\kappa|-1+\alpha = j\frac{s}{2}-1$
\begin{align*}
\big\| \langle \cdot \rangle^{-2s}\mathcal{R}^{1}_{m}&\big( (-\Delta_{v})^{s}(-\Delta_{x})^{\frac{j_{1}}{2}}\partial^{m}_{v}U^{\epsilon}_{\mathcal{J}} \big) \big\|_{L^{2}_{v}} \leq C \big\| \langle \cdot \rangle^{2s} (-\Delta_{v})^{s}(-\Delta_{x})^{\frac{j_{1}}{2}}\partial^{m}_{v}U^{\epsilon}_{\mathcal{J}}\big\|_{L^{2}_{v}}\\
& \leq C \big\| \langle \cdot \rangle^{2s} (-\Delta_{v})^{s}(-\Delta_{x})^{\frac{j_{1}}{2}}U^{\epsilon}_{\mathcal{J}}\big\|^{1-\frac{|m|+\alpha}{js/2}}_{L^{2}_{v}}\big\| \langle \cdot \rangle^{2s} (-\Delta_{v})^{s}V^{j,\epsilon}_{\mathcal{J}}\big\|^{\frac{|m|+\alpha}{js/2}}_{L^{2}_{v}}\,.
\end{align*}  
The last inequality follows by interpolation.  Furthermore, invoking interpolation again
\begin{align*}
&\big\| \langle \cdot \rangle^{-2s} \big(\partial^{\kappa-m}_{v}\langle v \rangle^{4s}\big)\times(-\Delta_{v})^{s}(-\Delta_{x})^{\frac{j_{1}}{2}}\partial^{m}_{v}(1-\Delta_{v})^{\frac{\alpha}{2}}U^{\epsilon}_{\mathcal{J}} \big\|_{L^{2}_{v}} \\
&\hspace{2cm}\leq C \big\| \langle \cdot \rangle^{2s} (-\Delta_{v})^{s}(-\Delta_{x})^{\frac{j_{1}}{2}}U^{\epsilon}_{\mathcal{J}}\big\|^{1-\frac{|m|+\alpha}{js/2}}_{L^{2}_{v}}\big\| \langle \cdot \rangle^{2s} (-\Delta_{v})^{s}V^{j,\epsilon}_{\mathcal{J}}\big\|^{\frac{|m|+\alpha}{js/2}}_{L^{2}_{v}}\\
&\hspace{4cm} +C \big\| \langle \cdot \rangle^{2s} (-\Delta_{v})^{s}(-\Delta_{x})^{\frac{j_{1}}{2}}U^{\epsilon}_{\mathcal{J}}\big\|_{L^{2}_{v}}\,.
\end{align*}
Similarly, the spatial gradient term is given by
\begin{align*}
&F^{j,\epsilon}_{2} := (-\Delta_{x})^{\frac{j_{1}}{2}}\partial^{\kappa}_{v}\big(1-\Delta_{v}\big)^{\frac{\alpha}{2}}\Big(\theta(v)\cdot\nabla_{x}U^{\epsilon}_{\mathcal{J}}\Big)\\
&=\sum_{|m|\leq |\kappa|}\bigg(\begin{array}{c}\kappa\\m\end{array}\bigg)\Big( \partial^{\kappa-m}_{v}\theta(v)\cdot\nabla_{x}(-\Delta_{x})^{\frac{j_{1}}{2}}\partial^{m}_{v}(1-\Delta_{v})^{\frac{\alpha}{2}}U^{\epsilon}_{\mathcal{J}} + \mathcal{R}^{2}_{m}\big( \partial_{x_{i}}(-\Delta_{x})^{\frac{j_{1}}{2}}\partial^{m}_{v}U^{\epsilon}_{\mathcal{J}} \big) \Big)\,, 
\end{align*}
where Proposition \ref{prop-app2} was used for the commutation.  Given the explicit form of $\theta_{i}(v)=\mathcal{J}_{i}(v)$ presented in \eqref{stero2} it follows that
\begin{align*}
\big\| \langle \cdot \rangle^{-2s}\mathcal{R}^{2}_{m}\big( \partial_{x_i}(-\Delta_{x})^{\frac{j_{1}}{2}}&\partial^{m}_{v}U^{\epsilon}_{\mathcal{J}} \big) \big\|^{2}_{L^{2}_{v}}\leq C\|\langle \cdot \rangle^{-2s} \nabla_{x}(-\Delta_{x})^{\frac{j_{1}}{2}}\partial^{m}_{v}U^{\epsilon}_{\mathcal{J}} \|^{2}_{L^{2}_{v}}\\
&\qquad \leq C\,\|\langle \cdot \rangle^{-2s} \nabla_{x}(-\Delta_{x})^{\frac{j_{1}}{2}}U^{\epsilon}_{\mathcal{J}} \|^{2}_{L^{2}_{v}} + C\| \langle \cdot \rangle^{-2s} \nabla_{x}V^{j,\epsilon}_{\mathcal{J}} \|^{2}_{L^{2}_{v}}\,.
\end{align*}
In the same way,
\begin{align*}
\big\| \langle\cdot \rangle^{-2s} \partial^{\kappa-m}_{v}\theta(v)\cdot\nabla_{x}(-\Delta_{x})^{\frac{j_{1}}{2}}&\partial^{m}_{v}(1-\Delta_{v})^{\frac{\alpha}{2}}U^{\epsilon}_{\mathcal{J}} \big\|^{2}_{L^{2}_{v}} \\
&\leq C\,\|\langle \cdot \rangle^{-2s} \nabla_{x}(-\Delta_{x})^{\frac{j_{1}}{2}}U^{\epsilon}_{\mathcal{J}} \|^{2}_{L^{2}_{v}} + C\| \langle \cdot \rangle^{-2s} \nabla_{x}V^{j,\epsilon}_{\mathcal{J}} \|^{2}_{L^{2}_{v}}\,.
\end{align*}
Repeating previous argument and using Proposition \ref{prop-app1}, the following estimate for $$F^{j,\epsilon}_{3}:=(-\Delta_{x})^{\frac{j_{1}}{2}}\partial^{\kappa}_{v}\big(1-\Delta_{v}\big)^{\frac{\alpha}{2}}\big(\theta_{d}\,\phi'_{\epsilon}\,U_{\mathcal{J}}\big)$$ holds
\begin{equation*}
\| \langle \cdot \rangle^{-2s}F^{j,\epsilon}_{3} \|^{2}_{L^{2}_{v}} \leq C_{\epsilon}\,\|\langle \cdot \rangle^{-2s} (-\Delta_{x})^{\frac{j_{1}}{2}}U^{j,\epsilon/2}_{\mathcal{J}} \|^{2}_{L^{2}_{v}} + C_{\epsilon}\| \langle \cdot \rangle^{-2s}V^{j,\epsilon/2}_{\mathcal{J}} \|^{2}_{L^{2}_{v}}\,.
\end{equation*}
Finally, for $$F^{j,\epsilon}_{4} = (-\Delta_{x})^{\frac{j_{1}}{2}}\partial^{\kappa}_{v}\big(1-\Delta_{v}\big)^{\frac{\alpha}{2}}\bigg(\frac{\big[\mathcal{I}_{h}(\phi_{\epsilon}u)\big]_{\mathcal{J}}}{\langle v \rangle^{d-1-2s}} \bigg)\,,$$
note that
\begin{equation*}
h\big( \theta\cdot\theta' \big) = h\Big(1 - 2\,\dfrac{\abs{v-v^{\prime}}^{2}}{\pro{v}^{2}\pro{v^{\prime}}^{2}}\Big)\,.
\end{equation*}
Therefore,
\begin{equation*}
F^{j,\epsilon}_{4} \sim \frac{\big[\mathcal{I}_{\tilde{h}}\big( (-\Delta_{x})^{\frac{j_{1}}{2}}\phi_{\epsilon}u \big)\big]_{\mathcal{J}}}{\langle v \rangle^{d-1-2s}}\,,\quad\text{where}\quad \tilde{h} = \big(1-\Delta_{v}\big)^{\frac{js/2}{2}}\bigg( h\Big(1 - 2\,\dfrac{\abs{v-v^{\prime}}^{2}}{\pro{v}^{2}\pro{v^{\prime}}^{2}}\Big) \bigg)\,.
\end{equation*}
As a consequence, since $j\frac{s}{2}\leq N_{0}$, the estimate holds
\begin{equation*}
\| \langle \cdot \rangle^{-2s}F^{j,\epsilon}_{4} \|^{2}_{L^{2}_{v}} \leq C(\| h \|_{\mathcal{C}^{N_0}})\big\| (-\Delta_{x})^{\frac{j_{1}}{2}}u^{\epsilon}\big\|^{2}_{L^{2}_{\theta}} \sim \big\|\langle \cdot \rangle^{-2s} (-\Delta_{x})^{\frac{j_{1}}{2}}U^{\epsilon}_{\mathcal{J}}\big\|^{2}_{L^{2}_{v}}\,.
\end{equation*}
Overall, these estimates together with the induction hypothesis lead to the following control valid for any $\delta>0$
\begin{equation}\label{estimate-angular-source}
\begin{split}
\int^{t_{1}}_{t_0}\big\| \langle \cdot \rangle^{-2s} F^{j,\epsilon} \big\|^{2}_{L^{2}_{x,v}}\text{d}\tau&\leq C_{\delta}\Big(1 + \norm{\theta_{d}\,g}^{2}_{L^{2}((t_0/2,t_1)\times\Gamma^{-})}\Big) \\
&\qquad\qquad + \delta \int^{t_{1}}_{t_0}\big\| \langle \cdot \rangle^{2s} (-\Delta_{v})^{s}V^{j,\epsilon}_{\mathcal{J}}\big\|^{2}_{L^{2}_{x,v}}\text{d}\tau\,.
\end{split}
\end{equation}
The proof follows from estimate \eqref{estimate-angular-source} multiplying equation \eqref{equation-angular-regularization} by $(-\Delta_{v})^{s}V^{j,\epsilon}$ integra\-ting in all variables and choosing $\delta=\frac{D_0}{2}$. 
\end{proof}
\section{Existence and uniqueness of solution}\label{sec5}
In this section we prove the well-posedness for the RTE \eqref{eq:1} using the Lumer--Phillips Theorem.  The main step consists in describing the domain of the operator
\begin{equation*}
\mathcal{L}(u) := \theta\cdot\nabla_{x}u - \mathcal{I}(u)\,.
\end{equation*} 
The central issue is to guarantee the existence of a trace mapping in such domain for which the Green's formula is valid.  To this end, consider the Banach space
\begin{equation*}
H^{s}_{\mathcal{L}} = \big\{ u \in L^{2}_{x}\cap H^{s}_{\theta}\big(\mathbb{R}^{d}_{+}\times\mathbb{S}^{d-1}\big) \, \big| \, \mathcal{L}(u) \in L^{2}\big(\mathbb{R}^{d}_{+}\times\mathbb{S}^{d-1}\big)  \big\}\,,
\end{equation*}
with norm
\begin{equation*}
\|u\|^{2}_{H^{s}_{\mathcal{L}}} : = \int_{\mathbb{R}^{d-1}_{+}}\|u\|^{2}_{H^{s}_{\theta}}\,{\rm d}x + \| \mathcal{L}u \|^{2}_{L^{2}_{x,\theta}} \,.
\end{equation*}
\begin{prop}[Local traces $\gamma^{\pm}$]\label{Traceprop}
Let $K^{\pm}$ be a compact set of $$\Gamma^{\pm}=\big\{ (x,\theta)\in \big\{x \,\big|\, x_{d}=0 \big\}\times\mathbb{S}^{d-1}\,\big| \, \pm\theta_{d} < 0 \big\}\,.$$
Then, for any $u\in H^{1}_{x}\cap H^{s}_{\mathcal{L}}$ the trace mappings
\begin{equation*}
u\rightarrow \gamma_{K^{\pm}}(u) := u\big|_{K^{\pm}}
\end{equation*}  
satisfy the bound
\begin{equation}\label{contrace}
\|\gamma_{K^{\pm}}(u)\|_{L^{2}(K^{\pm})} \leq C_{K^{\pm}}\| u \|_{H^{s}_{\mathcal{L}}}\,.
\end{equation}
\end{prop}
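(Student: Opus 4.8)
\emph{Plan.} The statement is a localized version of the classical trace lemma for the free transport operator (in the spirit of Cessenat and Bardos): I would prove it by a Green identity applied to a suitable truncation of $u$, using the algebraic structure of $\mathcal{I}$ recorded in Proposition~\ref{prop2.1} to absorb the nonlocal contribution into the $H^{s}_{\theta}$–energy. The hypothesis $u\in H^{1}_{x}$ enters only to legitimize the integration by parts; the final bound will involve solely $\|u\|_{H^{s}_{\mathcal{L}}}$. First, since $K^{\pm}$ is compact and contained in $\{\pm\theta_{d}<0\}$, there is $\delta=\delta(K^{\pm})>0$ with $|\theta_{d}|\geq\delta$ on $K^{\pm}$, and the spatial footprint of $K^{\pm}$ in $\{x_{d}=0\}$ is bounded. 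I would fix a cut-off $\chi\in\mathcal{C}^{\infty}_{c}(\overline{\mathbb{R}^{d}_{+}}\times\mathbb{S}^{d-1})$ of product form $\chi(x,\theta)=\psi(x_{d})\zeta(\bar{x},\theta)$ with $0\le\chi\le1$, $\chi\equiv1$ on $K^{\pm}$, $\operatorname{supp}\psi\subset[0,1)$, $\psi\equiv1$ near $0$, and $\operatorname{supp}\zeta$ bounded in $\bar{x}$ and contained in $\{\pm\theta_{d}<0\}$; note that $\theta\mapsto\chi^{2}(x,\theta)$ is Lipschitz uniformly in $x$. Restricting the angular support to a single sign of $\theta_{d}$ is exactly what will suppress the cancellation in the boundary integral below.

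\emph{Green identity.} Applying the divergence theorem in $x$ on $\mathbb{R}^{d}_{+}$ to the field $\theta\,\chi^{2}|u|^{2}$ and integrating in $\theta$ — licit because $u\in H^{1}_{x}\cap L^{2}_{x,\theta}$ and $\chi$ is compactly supported, so $\chi^{2}|u|^{2}\in W^{1,1}_{x}$ with only the face $\{x_{d}=0\}$ (outer normal $-e_{d}$) contributing — one obtains, using $\operatorname{supp}\chi\subset\{\pm\theta_{d}<0\}$,
\begin{equation*}
\int_{\Gamma^{\pm}}\chi^{2}|u|^{2}\,|\theta_{d}|\;\mathrm{d}\bar{x}\,\mathrm{d}\theta
\le 2\Big|\int_{\mathbb{R}^{d}_{+}}\!\!\int_{\mathbb{S}^{d-1}}\chi^{2}\,u\,(\theta\cdot\nabla_{x}u)\,\mathrm{d}\theta\,\mathrm{d}x\Big|
+\Big|\int_{\mathbb{R}^{d}_{+}}\!\!\int_{\mathbb{S}^{d-1}}|u|^{2}\,\theta\cdot\nabla_{x}(\chi^{2})\,\mathrm{d}\theta\,\mathrm{d}x\Big|.
\end{equation*}
The last term is immediately $\le C_{\chi}\|u\|^{2}_{L^{2}_{x,\theta}}$, and since $H^{s}_{\theta}\hookrightarrow L^{2}_{\theta}$ (from \eqref{2.12}) this is $\lesssim C_{\chi}\|u\|^{2}_{H^{s}_{\mathcal{L}}}$.

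\emph{Absorbing the transport term and conclusion.} Write $\theta\cdot\nabla_{x}u=\mathcal{L}(u)+\mathcal{I}(u)$. The $\mathcal{L}$–part is handled by Cauchy--Schwarz and $0\le\chi\le1$: $2|\int\chi^{2}u\,\mathcal{L}(u)|\le\|u\|^{2}_{L^{2}_{x,\theta}}+\|\mathcal{L}u\|^{2}_{L^{2}_{x,\theta}}\lesssim\|u\|^{2}_{H^{s}_{\mathcal{L}}}$. For the $\mathcal{I}$–part use the splitting $\mathcal{I}=\mathcal{I}_{b(1)}+\mathcal{I}_{h}$ of Proposition~\ref{prop2.1}: the $\mathcal{I}_{h}$–term is bounded by $\|u\|_{L^{2}_{x,\theta}}\|\mathcal{I}_{h}u\|_{L^{2}_{x,\theta}}\lesssim\|u\|^{2}_{L^{2}_{x,\theta}}$ since $\mathcal{I}_{h}$ is $L^{2}_{\theta}$–bounded, while for $\mathcal{I}_{b(1)}$ I would invoke the weak form \eqref{1.6} (kernel $b(1)(1-\theta\cdot\theta')^{-(d-1)/2-s}$) with test function $\psi=\chi^{2}u$, and split $\chi^{2}(\theta')u(\theta')-\chi^{2}(\theta)u(\theta)=\chi^{2}(\theta')(u(\theta')-u(\theta))+u(\theta)(\chi^{2}(\theta')-\chi^{2}(\theta))$. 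The first piece produces a nonpositive quantity with modulus $\lesssim\|u\|^{2}_{H^{s}_{\theta}}$ by \eqref{2.12}; the second, using $|\chi^{2}(\theta')-\chi^{2}(\theta)|\lesssim|\theta-\theta'|$, is dominated by $C_{\chi}\iint|\theta-\theta'|^{-(d-2+2s)}|u(\theta')-u(\theta)|\,|u(\theta)|$, and splitting the kernel as $|\theta-\theta'|^{-((d-1)/2+s)}\cdot|\theta-\theta'|^{-((d-3)/2+s)}$ and applying Cauchy--Schwarz bounds it by $C_{\chi}\|u\|_{H^{s}_{\theta}}\|u\|_{L^{2}_{\theta}}$, where $\int_{\mathbb{S}^{d-1}}|\theta-\theta'|^{-(d-3+2s)}\,\mathrm{d}\theta'<\infty$ holds precisely because $s<1$. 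Integrating all these pointwise-in-$x$ bounds over $\operatorname{supp}_{x}\chi$ turns the angular integrals into $\int_{\mathbb{R}^{d}_{+}}\|u(x,\cdot)\|^{2}_{H^{s}_{\theta}}\,\mathrm{d}x\le\|u\|^{2}_{H^{s}_{\mathcal{L}}}$, hence $\int_{\Gamma^{\pm}}\chi^{2}|u|^{2}|\theta_{d}|\le C_{K^{\pm}}\|u\|^{2}_{H^{s}_{\mathcal{L}}}$; since $\chi\equiv1$ and $|\theta_{d}|\ge\delta$ on $K^{\pm}$, the left side is $\ge\delta\|\gamma_{K^{\pm}}(u)\|^{2}_{L^{2}(K^{\pm})}$, and \eqref{contrace} follows. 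For general $u\in H^{1}_{x}\cap H^{s}_{\mathcal{L}}$ the Green identity is justified by approximating $u$ in $H^{1}_{x}$ by smooth functions, all integrands in the estimate being genuinely integrable.

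\emph{Main obstacle.} The delicate point is the treatment of $\mathcal{I}(u)$: this operator is formally of order $2s$ in $\theta$, so $\mathcal{I}(u)$ need not lie in $L^{2}_{\theta}$ under the sole hypothesis $u\in H^{s}_{\theta}$, and one must exploit that it appears paired with $u$ (against a $\theta$-dependent cut-off) so that the bilinear form costs only the $H^{s}_{\theta}$–seminorm. The cut-off is forced to depend on $\theta$ — the sign restriction on $\theta_{d}$ is unavoidable, to kill cancellation in the boundary term — which generates a commutator whose kernel one has to check remains integrable on $\mathbb{S}^{d-1}$, and this is exactly where the restriction $s<1$ is consumed.
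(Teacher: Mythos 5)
Your proposal is correct and follows essentially the same route as the paper: test the equation against a sign-restricted cutoff squared times $u$, integrate by parts in $x$ (licit since $u\in H^{1}_{x}$), so that the boundary flux has a definite sign on the angular support, and absorb the scattering contribution into the $H^{s}_{\theta}$-energy, concluding via $|\theta_{d}|\gtrsim_{K^{\pm}}1$ on the compact set. The only differences are cosmetic: you add an $x$-cutoff (harmless, it just produces the extra commutator term you control by $\|u\|^{2}_{L^{2}_{x,\theta}}$) and you prove by hand, via the splitting of $\chi^{2}(\theta')u(\theta')-\chi^{2}(\theta)u(\theta)$ and the kernel estimate using $s<1$, the smooth-multiplier bound $\|\varphi^{2}_{K^{\pm}}u\|_{H^{s}_{\theta}}\lesssim\|u\|_{H^{s}_{\theta}}$ that the paper invokes implicitly together with the $H^{s}_{\theta}$--$H^{-s}_{\theta}$ duality.
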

\begin{proof}
Fix $K^{+}\subset\Gamma^{+}$ compact and let $\varphi_{K^{+}}(\theta)\geq0$ be a smooth function equal to unity in $K^{+}$ and compactly supported in $\{\theta_{d}<0\}$.  For any $w\in H^{1}_{x}\cap H^{s}_{\mathcal{L}}$ it follows that $\mathcal{L}(w) =: f \in L^{2}_{x,\theta}$ and, since the Green's formula is valid in $H^{1}_{x}$, 
\begin{align*}
\int_{\mathbb{R}^{d}_{+}}\int_{\mathbb{S}^{d-1}} f\,\varphi^{2}_{K^{+}}& w \,{\rm d}\theta \,{\rm d}x = \int_{\mathbb{R}^{d}_{+}}\int_{\mathbb{S}^{d-1}} \mathcal{L}(w)\,\varphi^{2}_{K^{+}}w \,{\rm d}\theta \,{\rm d}x \\
&= \int_{\Gamma^{+}} (\varphi_{K^{+}}w)^{2}\,\theta_{d}\,{\rm d}\theta \,{\rm d}\bar{x} - \int_{\mathbb{R}^{d}_{+}}\int_{\mathbb{S}^{d-1}} \mathcal{I}(w)\,\varphi^{2}_{K^{+}}w \,{\rm d}\theta \,{\rm d}x\,.
\end{align*}
Recalling the weak formulation \eqref{1.6}, it follows that
\begin{align*}
\Big|\int_{\mathbb{R}^{d}_{+}}\int_{\mathbb{S}^{d-1}} \mathcal{I}(w)\,\varphi^{2}_{K^{+}}w \,{\rm d}\theta \,{\rm d}x \Big| &\leq \int_{\mathbb{R}^{d-1}_{+}} \|w\|_{H^{s}_{\theta}}\| \varphi^{2}_{K^{+}}w\|_{H^{s}_{\theta}}\text{d}x \\
&\leq  C_{K^{+}}\int_{\mathbb{R}^{d-1}_{+}} \|w\|^{2}_{H^{s}_{\theta}}\text{d}x\,.
\end{align*}
Also,
\begin{align*}
\Big| \int_{\mathbb{R}^{d}_{+}}\int_{\mathbb{S}^{d-1}} f\,\varphi^{2}_{K^{+}} w \,{\rm d}\theta \,{\rm d}x \Big| &\leq \int_{ \mathbb{R}^{d} }\|f\|_{H^{-s}_{\theta}}\|w\,\varphi^{2}_{K^{+}}\|_{H^{s}_{\theta}}{\rm d}x \\
&\leq C_{K^{+}}\Big( \int_{\mathbb{R}^{d}_{+}}\|f \|^{2}_{H^{-s}_{\theta}} {\rm d}x+ \int_{\mathbb{R}^{d-1}}\|w\|^{2}_{H^{s}_{\theta}}{\rm d}x \Big)\,.
\end{align*}
Consequently, we obtain that
\begin{align*}
{\rm dist}\big(\{\theta_d=0\},K^{+}\big)\int_{K^{+}} | w |^{2}\,{\rm d}\theta \,{\rm d}\bar{x} &\leq \int_{K^{+}} | w |^{2}\,|\theta_{d}|\,{\rm d}\theta \,{\rm d}\bar{x}\\
&\leq \int_{\Gamma^{+}} (\varphi_{K^{+}}w)^{2}\,|\theta_{d}|\,{\rm d}\theta \,{\rm d}\bar{x}\leq C_{K^{+}}\| w \|^{2}_{H^{s}_{\mathcal{L}}}\,.
\end{align*}
This proves the estimate for $\gamma_{K^{+}}$.  For the trace mapping $\gamma_{K^{-}}$ the proof is similar.
\end{proof}
\noindent
Thanks to Proposition \ref{Traceprop}, the trace mappings $\gamma^{\pm}(\cdot)$ can be extended by continuity from $H^{1}_{x}\cap H^{s}_{\mathcal{L}}$ to its closure $\overline{ H^{1}_{x}\cap H^{s}_{\mathcal{L}} }$ in $H^{s}_{\mathcal{L}}$.  In other words, functions in $\overline{ H^{1}_{x}\cap H^{s}_{\mathcal{L}} }$ have well defined traces $\gamma^{\pm}$ in $L^{2}_{loc}(\Gamma^{\pm})$. 
\begin{thm}\label{Tracetheorem}
Let $H^{s}_{\Gamma_{loc}} = \big\{ u\in H^{s}_{\mathcal{L}} \, \big|\, \gamma^{\pm}(u)\in L^{2}_{loc}(\Gamma^{\pm}) \big\}$\footnote{In the defintion of $H^{s}_{\Gamma_{loc}}$ is implicit that $\gamma^{\pm}(\cdot)$ are well defined and agree with $L^{2}_{loc}(\Gamma^{\pm})$ functions.}.  Then
\begin{equation*}
\overline{H^{1}_{x}\cap H^{s}_{\mathcal{L}}} = H^{s}_{\Gamma_{loc}}\,.
\end{equation*}
\end{thm}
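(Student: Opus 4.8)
The plan is to prove the two inclusions separately.

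\emph{The inclusion $\overline{H^{1}_{x}\cap H^{s}_{\mathcal{L}}}\subseteq H^{s}_{\Gamma_{loc}}$} is the soft one and follows at once from Proposition~\ref{Traceprop} and the continuity extension of $\gamma^{\pm}$ described just before the statement: if $u_{n}\in H^{1}_{x}\cap H^{s}_{\mathcal{L}}$ and $u_{n}\to u$ in $H^{s}_{\mathcal{L}}$, then \eqref{contrace} shows that $(\gamma_{K^{\pm}}(u_{n}))_{n}$ is Cauchy in $L^{2}(K^{\pm})$ for every compact $K^{\pm}\subset\Gamma^{\pm}$; its limit is $\gamma^{\pm}(u)|_{K^{\pm}}$, which is therefore in $L^{2}(K^{\pm})$, and letting $K^{\pm}$ exhaust $\Gamma^{\pm}$ gives $\gamma^{\pm}(u)\in L^{2}_{loc}(\Gamma^{\pm})$, i.e. $u\in H^{s}_{\Gamma_{loc}}$.

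\emph{For the reverse inclusion} I would start from $u\in H^{s}_{\Gamma_{loc}}$ and first reduce to $u$ compactly supported in $x$ by multiplying by a cut-off $\chi_{R}(x)$ equal to one on $\{|\bar x|<R,\ x_{d}<R\}$ and to one near $\{x_{d}=0\}$: then $\mathcal{L}(\chi_{R}u)=\chi_{R}\mathcal{L}u+(\theta\cdot\nabla\chi_{R})u\to\mathcal{L}u$ in $L^{2}$ (second term $O(R^{-1})$), $\chi_{R}u\to u$ in $L^{2}_{x}\cap H^{s}_{\theta}$, and $\gamma^{\pm}(\chi_{R}u)\in L^{2}_{loc}$, so $\chi_{R}u\to u$ in $H^{s}_{\mathcal{L}}$ and it is enough to approximate each $\chi_{R}u$; for such $u$ one has $\gamma^{\pm}(u)\in L^{2}(\Gamma^{\pm};|\theta_{d}|\,\mathrm{d}\bar x\,\mathrm{d}\theta)$. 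The crucial step is then to extend $u$ across $\{x_{d}=0\}$ to a function $\tilde u$ on $\{x_{d}>-1\}\times\mathbb{S}^{d-1}$ with: (a) $\tilde u=u$ on $\{x_{d}>0\}$; (b) $\tilde u\in L^{2}(\{x_{d}>-1\};H^{s}_{\theta})$; (c) $\mathcal{L}\tilde u\in L^{2}(\{x_{d}>-1\}\times\mathbb{S}^{d-1})$ with no surface Dirac mass at $x_{d}=0$ — equivalently, the one-sided traces of $\tilde u$ at $x_{d}=0$ coincide a.e.\ with $\gamma(u):=u|_{x_{d}=0^{+}}$ for both signs of $\theta_{d}$, and $\theta\cdot\nabla_{x}\tilde u-\mathcal{I}\tilde u\in L^{2}$ on the slab $\{-1<x_{d}<0\}$. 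Here the naive extensions — by zero, by $x_{d}$-reflection, or by transporting $\gamma(u)$ along straight characteristics — all fail (c), since each leaves a term of the form $\mathcal{I}(\text{a function merely in }L^{2}_{\theta})\notin L^{2}$: the bare trace $\gamma(u)$ does not carry the $H^{s}_{\theta}$ regularity $\mathcal{I}$ demands. Therefore $\tilde u$ must be obtained by \emph{solving a radiative transfer problem on the slab}: picking $\tilde F$ to be any $L^{2}$-extension of $\mathcal{L}u$ to $\{-1<x_{d}<0\}$ (e.g.\ $\tilde F\equiv 0$ there), solve $\theta\cdot\nabla_{x}\tilde u-\mathcal{I}(\tilde u)=\tilde F$ with incoming datum $\gamma^{+}(u)$ on the face $\{x_{d}=0,\ \theta_{d}<0\}$ and, on the far incoming face $\{x_{d}=-1,\ \theta_{d}>0\}$, the datum obtained by transporting $\gamma^{-}(u)$ back from $x_{d}=0$, so that the outgoing trace of $\tilde u$ at $\{x_{d}=0,\ \theta_{d}>0\}$ is exactly $\gamma^{-}(u)$. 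The slab being convex, this is delivered by the a priori energy and averaging estimates established above, now applied on the slab (they do not rely on the present theorem): testing the equation against $\tilde u$ and using \eqref{2.12} as in \eqref{eq2.3-1}--\eqref{eq2.4} yields $\tilde u\in L^{\infty}_{x_{d}}L^{2}_{\bar x,\theta}\cap L^{2}_{x}H^{s}_{\theta}$ with norm controlled by $\|\gamma^{\pm}(u)\|_{L^{2}(|\theta_{d}|)}+\|\tilde F\|_{L^{2}}$, giving (b); (a) and (c) hold by construction, since the trace match at $x_{d}=0$ forces $\mathcal{L}\tilde u=\mathcal{L}u\,\mathbf{1}_{\{x_{d}>0\}}+\tilde F\,\mathbf{1}_{\{-1<x_{d}<0\}}\in L^{2}$ with no Dirac part.

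Granting the extension, the rest is routine: for $0<\eta<\tfrac{1}{2}$ set $u_{\eta}:=(\tilde u*\rho_{\eta})|_{\{x_{d}>0\}}$, with $\rho_{\eta}$ a standard mollifier in all of $x$. Then $u_{\eta}$ is smooth in $x$ with $\nabla_{x}u_{\eta}=\tilde u*\nabla\rho_{\eta}\in L^{2}$ (Young), so $u_{\eta}\in H^{1}_{x}$; $\mathcal{L}u_{\eta}=(\mathcal{L}\tilde u)*\rho_{\eta}\in L^{2}$ (because $\mathcal{L}\tilde u\in L^{2}$ \emph{across} $x_{d}=0$, with no Dirac mass to blow up under convolution); and $u_{\eta}\in L^{2}_{x}\cap H^{s}_{\theta}$; hence $u_{\eta}\in H^{1}_{x}\cap H^{s}_{\mathcal{L}}$. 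Since $(\mathcal{L}\tilde u)*\rho_{\eta}\to\mathcal{L}\tilde u$, $\tilde u*\rho_{\eta}\to\tilde u$ and $((-\Delta_{v})^{s/2}\tilde U_{\mathcal{J}})*\rho_{\eta}\to(-\Delta_{v})^{s/2}\tilde U_{\mathcal{J}}$ in $L^{2}_{loc}$ as $\eta\to0$, restricting to $\{x_{d}>0\}$ gives $u_{\eta}\to u$ in $H^{s}_{\mathcal{L}}$, so $u\in\overline{H^{1}_{x}\cap H^{s}_{\mathcal{L}}}$, and the two inclusions together yield the equality. The hardest point is the slab extension: it must at once \emph{regenerate} the $H^{s}_{\theta}$ regularity that the $L^{2}$ trace lacks (so it cannot be written down explicitly but must solve an RTE) and match the trace of $u$ at $x_{d}=0$ \emph{exactly}, not merely approximately — any nonzero jump adds a surface measure of infinite $L^{2}$ norm to $\mathcal{L}\tilde u$ — which is precisely where the $L^{2}_{loc}$-trace hypothesis on \emph{both} $\gamma^{+}(u)$ and $\gamma^{-}(u)$ is used (the latter through backward transport to the far face of the slab); one must also take care to obtain the slab solvability from the estimates already available rather than argue in a circle with the well-posedness theory developed subsequently.
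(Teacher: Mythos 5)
Your first inclusion and the final mollification step are fine, but the heart of your argument --- the construction of the extension $\tilde u$ to the slab $\{-1<x_d<0\}$ with \emph{exact} trace matching at $x_d=0$ --- has a genuine gap. You correctly identify that any jump of the traces across $\{x_d=0\}$ creates a surface measure $\theta_d\,[\tilde u]\,\delta_{\{x_d=0\}}$ in $\theta\cdot\nabla_x\tilde u$, so the matching must be exact on $\{\theta_d>0\}$ as well as on $\{\theta_d<0\}$. On $\{\theta_d<0\}$ you can indeed prescribe $\gamma^{+}(u)$ as incoming data for the slab; but on $\{\theta_d>0\}$ the trace of the slab solution at $x_d=0$ is an \emph{outgoing} trace, hence it is not prescribable: it is the image of the incoming data and the source under the (albedo-type) solution operator of the slab problem, which involves the scattering operator $\mathcal{I}$. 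Your recipe --- put on the far face $\{x_d=-1,\theta_d>0\}$ ``the datum obtained by transporting $\gamma^{-}(u)$ back from $x_d=0$'' --- only returns $\gamma^{-}(u)$ at $x_d=0$ if the slab equation is pure transport; once $\mathcal{I}$ is present (and you cannot drop it, since otherwise $\mathcal{L}\tilde u=-\mathcal{I}\tilde u$ on the slab and condition (c) fails, as you yourself observe) the outgoing trace differs from $\gamma^{-}(u)$, and achieving equality would require solving an inverse problem for the outgoing-trace map, with a target that is merely $L^2_{loc}$. Nothing in the estimates established earlier in the paper provides such a controllability statement, and it is not standard; so the extension you need is not constructed. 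Two secondary points: (i) your claim that after the cut-off $\chi_R$ one has $\gamma^{\pm}(u)\in L^{2}(\Gamma^{\pm};|\theta_d|\,\mathrm{d}\bar x\,\mathrm{d}\theta)$ does not follow from $\gamma^{\pm}(u)\in L^{2}_{loc}(\Gamma^{\pm})$, because compact subsets of $\Gamma^{\pm}$ stay away from the grazing set $\{\theta_d=0\}$ and the local bounds give no integrability there; (ii) even an intrinsic solvability theory for the slab with rough incoming data would have to be built from scratch here, since the well-posedness results of Section \ref{sec5} come after, and depend on, this theorem.

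For comparison, the paper avoids extension across the boundary altogether: it reduces (by a standard lifting) to the zero-incoming-trace case and proves that $H^{1}_{x}\cap H^{s}_{\mathcal{L}}\cap\{\gamma^{-}(\cdot)=0\}$ is dense in $H^{s}_{\mathcal{L}}\cap\{\gamma^{-}(\cdot)=0\}$ by a Bardos-type argument: one solves the regularized stationary problems \eqref{approxprobv0} (viscosity $-\epsilon\Delta$, bounded approximation of the scattering operator, Neumann condition on the normal derivative) for smooth sources $f\in H^1_x\cap L^2_\theta$, passes to the limit $\epsilon\to0$ to get $H^1_x$ solutions of \eqref{limitprob.}, and then, given $\tilde u$ in the space, writes $\mu\tilde u+\mathcal{L}\tilde u=F\in L^2_{x,\theta}$, approximates $F$ by regular sources $f^n$, and uses the Green's formula plus uniqueness for \eqref{limitprob.} to show the associated $H^1_x$ solutions converge to $\tilde u$ in $H^{s}_{\mathcal{L}}$. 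In that scheme the boundary condition is built into the approximating problems, so no trace-matching or extension issue ever arises; this is precisely the difficulty your route does not resolve.
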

\begin{proof}
We know, due to Proposition \ref{Traceprop}, that $\overline{H^{1}_{x}\cap H^{s}_{\mathcal{L}}} \subset H^{s}_{\Gamma_{loc}}$.  Let us prove the opposite inclusion.  The strategy consists in proving that the space $H^{1}_{x} \cap H^{s}_{\mathcal{L}}\cap\{\gamma^{-}(\cdot)=0\}$ is dense in $H^{s}_{\mathcal{L}}\cap\{\gamma^{-}(\cdot)=0\big\}$.  The rest of the proof is standard using ``lifting''.

\smallskip
\noindent
  In \cite{bardos} the following type of approximation problem, in more general domains, was used
\begin{equation}\label{approxprobv0}
\Bigg\{
\begin{array}{cl}
\mu u^{\epsilon} + \mathcal{L}_{\epsilon}(u^{\epsilon}) - \epsilon\Delta u^{\epsilon} = f & \text{ in }\; \mathbb{R}^{d}_{+}\times\mathbb{S}^{d-1}\,,\\
u^{\epsilon} = 0 & \text{ on }\; \Gamma^{-}\,,\\
\partial_{x_{d}}u^{\epsilon} = 0 & \text{ on }\, \Gamma\,, 
\end{array}
\end{equation}
where $\mathcal{L}_{\epsilon}$, for $\epsilon>0$, is the operator defined as 
\begin{equation*}\label{approxop.}
\mathcal{L}_{\epsilon} := \theta\cdot\nabla_{x} - \mathcal{I}_{\epsilon}\,,
\end{equation*}
where $\mathcal{I}_{\epsilon}$ is the scattering operator defined by \eqref{2.5} where $(-\Delta_{\theta})^{s}$ in formula \eqref{2.4} is replaced by 
\begin{equation*}
\big[ (-\Delta^{\epsilon}_{\theta})^{s}u \big]_{\mathcal{J}}\defi \pro{\cdot}^{d-1+2s}\,(-\Delta^{\epsilon}_{v})^{s}U_{\mathcal{J}},\quad \text{with}\quad (-\Delta^{\epsilon}_{v})^{s}:= \frac{1}{1 + \epsilon\langle v \rangle^{2s}}(-\tilde\Delta^{\epsilon}_{v})^{s} \frac{1}{1 + \epsilon\langle v \rangle^{2s}}\,.
\end{equation*}
The operator $(-\tilde\Delta^{\epsilon}_{v})^{s}$ is a bounded approximation of the fractional Laplacian defined through its Fourier transform
\begin{equation*}
\mathcal{F}_{v}\{(-\tilde\Delta^{\epsilon}_{v})^{s}\varphi\}(\xi) := \frac{|\xi|^{2s}}{1+\epsilon\langle \xi \rangle^{2s}}\widehat{\varphi}(\xi)\,,\qquad \xi\in\mathbb{R}^{d-1}\,.
\end{equation*}
For this approximation it is easy to see that $\| \mathcal{I}_{\epsilon}\|_{\mathcal{B}(L^{2}_{x,\theta})} \leq C\epsilon^{-3}$, where $\|\cdot\|_{\mathcal{B}}$ is the operator norm.

\smallskip
\noindent
Given $f\in H^{1}_{x}\cap L^{2}_{\theta}$, problem \eqref{approxprobv0} has a unique weak solution $u^{\epsilon}\in H^{1}_{x}\cap L^{2}_{\theta}\cap\{\gamma^{-}(\cdot)=0\}$.  Furthermore, the normal gradient $\partial_{x_{d}}u^{\epsilon}$ is forced to vanish in the boundary, then using the Green's formula for $u^{\epsilon}$ and $\partial_{x_{d}}u^{\epsilon}$ one is led to the energy estimate
\begin{align*}
c\,\Big\| (-\tilde\Delta^{\epsilon}_v)^{s/2}\Big( \frac{U^{\epsilon}_{\mathcal{J}}}{1+\epsilon\langle v \rangle^{2s}} \Big)\Big\|^{2}_{L^{2}_{x,v}} + (\mu-C)\norm{u^{\epsilon}}^{2}_{ H^{1}_{x}\cap L^{2}_{\theta}}  \leq \|f\|^{2}_{H^{1}_{x}\cap L^{2}_{\theta}}\,.
\end{align*}
This estimate, with $\mu>C$, is enough to send $\epsilon\rightarrow 0$ and conclude that the sequence $\{u^{\epsilon}\}$ converges, up to a subsequence, weakly in $H^{1}_{x}\cap L^{2}_{\theta}$ to a weak solution $u$ of the problem
\begin{equation}\label{limitprob.}
\bigg\{
\begin{array}{cl}
\mu u + \mathcal{L}(u) = f  & \text{ in }\; \mathbb{R}^{d}_{+}\times\mathbb{S}^{d-1}\,,\\
u = 0 & \text{ on }\; \Gamma^{-}\,,\\
\end{array}
\end{equation}
satisfying the estimate
\begin{align*}
c\,\Big\| (-\Delta_v)^{s/2} U_{\mathcal{J}} \Big\|^{2}_{L^{2}_{x,v}}+(\mu-C)\norm{u}^{2}_{ H^{1}_{x}\cap L^{2}_{\theta}} \leq \|f\|^{2}_{H^{1}_{x}\cap L^{2}_{\theta}}\,.
\end{align*}
In other words, $u\in H^{1}_{x} \cap H^{s}_{\mathcal{L}}\cap\{\gamma^{-}(\cdot)=0\}$.

\smallskip
\noindent
We are now ready to conclude.  Take $\tilde{u}\in H^{s}_{\mathcal{L}}\cap\{\gamma^{-}(\cdot)=0\}$, then $\tilde{u}$ satisfies problem \eqref{limitprob.} with $f$ replaced by some $F\in L^{2}_{x,\theta}$.   Take a sequence $\{f^{n}\}\subset H^{1}_{x}\cap L^{2}_{\theta}$ converging strongly to $F$ in $L^{2}_{x,\theta}$.  By the previous argument, there exists a sequence $\{u^{n}\}\in H^{1}_{x} \cap H^{s}_{\mathcal{L}}\cap\{\gamma^{-}(\cdot)=0\}$ solving problem \eqref{limitprob.} with $f$ replaced by $f^{n}$.  Since the Green's formula is valid for such sequence, it is not difficult to conclude that
\begin{equation*}
\| u^{n} - u^{m}\|_{H^{s}_{\mathcal{L}}}\leq C\| f^{n} - f^{m}\|_{L^{2}_{x,\theta}}\,.
\end{equation*}
Thus, $\{u^{n}\}$ is Cauchy and converges strongly to a limit, say $\tilde{w}\in H^{s}_{\mathcal{L}}\cap\{\gamma^{-}(\cdot)=0\}$.  Indeed, recall that the trace is continuous from estimate \eqref{contrace}, consequently, $\tilde{w}$ has a well defined trace with $\gamma^{-}(\tilde{w})=0$.   And, of course,
\begin{equation*}
\| u^{n} - \tilde{w}\|_{H^{s}_{\mathcal{L}}}\leq C\| f^{n} - F\|_{L^{2}_{x,\theta}}\rightarrow 0\,\;\text{ as }\;n\rightarrow0\,.
\end{equation*}
In this way, $\tilde{w}$ is a weak solution of problem \eqref{limitprob.} with $f$ replaced by $F\in L^{2}_{x,\theta}$.  But, problem \eqref{limitprob.} has a unique weak solution, therefore $\tilde{w} = \tilde{u}$.  Consequently, $$\tilde{u}\in\overline{H^{1}_{x} \cap H^{s}_{\mathcal{L}}\cap\{\gamma^{-}(\cdot)=0\} }\,,$$ which proves the result.
\end{proof}
\begin{rem}
Using the Green's formula, it readily follows that if $u\in \big\{ w\in H^{s}_{\mathcal{L}} \, \big| \, \gamma^{-}(w)\in L^{2}\big(\Gamma^{-},|\theta_{d}|\text{d}\theta\text{d}\bar{x}\big) \big\}$, then $\gamma^{+}(u)\in L^{2}\big(\Gamma^{+},|\theta_{d}|\text{d}\theta\text{d}\bar{x}\big)$.
\end{rem}

\smallskip
\noindent
As a corollary, Theorem \ref{Tracetheorem} gives the Green's identity on compact sets of $\Gamma^{\pm}$.
\begin{cor}\label{weakgreen}
Let $\varphi(x,\theta)\in H^{1}_{x}\cap H^{s}_{\theta}$ with support in $\{\theta_{d}<0\}\cup\{\theta_{d}>0\}$.  Then for $u\in H^{s}_{\Gamma_{loc}}$ it follows that
\begin{align*}
\int_{\mathbb{R}^{d}_{+}}\int_{\mathbb{S}^{d-1}}\mathcal{L}(u)\,\varphi\, {\rm d}\theta {\rm d}x & = -\int_{\mathbb{R}^{d}_{+}}\int_{\mathbb{S}^{d-1}}\mathcal{I}(u)\, \varphi \, {\rm d}\theta {\rm d}x - \int_{\mathbb{R}^{d}_{+}}\int_{\mathbb{S}^{d-1}} u\, \theta\cdot\nabla_{x}\varphi\, {\rm d}\theta {\rm d}x\\
&\qquad- \int_{\Gamma^{+}}\gamma^{+}(u)\varphi(\bar{x},\theta)\, \theta_{d}\, {\rm d}\theta {\rm d}\bar{x} - \int_{\Gamma^{-}}\gamma^{-}(u)\varphi(\bar{x},\theta)\, \theta_{d} \,{\rm d}\theta {\rm d}\bar{x}\,.
\end{align*}
\end{cor}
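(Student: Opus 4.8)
\textbf{Approach and Step 1 (the identity for $w\in H^{1}_{x}\cap H^{s}_{\mathcal{L}}$).} The plan is to prove the formula first on the dense subclass $H^{1}_{x}\cap H^{s}_{\mathcal{L}}$, where the classical divergence theorem is available, and then to pass to the limit using Theorem \ref{Tracetheorem} and the trace bound \eqref{contrace}. For $w\in H^{1}_{x}\cap H^{s}_{\mathcal{L}}$ one integrates by parts in $x$ over $\mathbb{R}^{d}_{+}$: only the face $\{x_{d}=0\}$ contributes, and with weight $\theta_{d}$ (the tangential derivatives integrate out, using the standard decay), so
\[
\int_{\mathbb{R}^{d}_{+}}\!\int_{\mathbb{S}^{d-1}}(\theta\cdot\nabla_{x}w)\,\varphi\;{\rm d}\theta\,{\rm d}x=-\int_{\mathbb{R}^{d}_{+}}\!\int_{\mathbb{S}^{d-1}}w\,(\theta\cdot\nabla_{x}\varphi)\;{\rm d}\theta\,{\rm d}x-\int_{\partial\mathbb{R}^{d}_{+}}\!\int_{\mathbb{S}^{d-1}}\theta_{d}\,w\,\varphi\;{\rm d}\theta\,{\rm d}\bar{x}.
\]
Since $\varphi$ vanishes near the grazing set $\{\theta_{d}=0\}$, the last integral splits as $\int_{\Gamma^{+}}+\int_{\Gamma^{-}}$ with $w|_{\Gamma^{\pm}}=\gamma^{\pm}(w)$; subtracting $\int\!\int\mathcal{I}(w)\varphi$, which is finite because by \eqref{1.6} and \eqref{2.12} the form $\int_{\mathbb{S}^{d-1}}\mathcal{I}(\cdot)\,(\cdot)\,{\rm d}\theta$ is bounded by $\|\cdot\|_{H^{s}_{\theta}}\|\cdot\|_{H^{s}_{\theta}}$ while $w,\varphi$ are square integrable in $x$ with values in $H^{s}_{\theta}$, produces exactly the asserted identity with $u$ replaced by $w$.

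\textbf{Step 2 (passage to the limit on the interior terms).} By Theorem \ref{Tracetheorem}, $u\in H^{s}_{\Gamma_{loc}}=\overline{H^{1}_{x}\cap H^{s}_{\mathcal{L}}}$, so pick $u^{n}\in H^{1}_{x}\cap H^{s}_{\mathcal{L}}$ with $u^{n}\to u$ in $H^{s}_{\mathcal{L}}$. Then $\mathcal{L}(u^{n})\to\mathcal{L}(u)$ in $L^{2}_{x,\theta}$ and $\varphi\in L^{2}_{x,\theta}$, so the left-hand sides converge; the scattering term converges because the form bound above gives $\bigl|\int\!\int\mathcal{I}(u^{n}-u)\,\varphi\bigr|\le C\bigl(\int_{\mathbb{R}^{d}_{+}}\|u^{n}-u\|^{2}_{H^{s}_{\theta}}\,{\rm d}x\bigr)^{1/2}\bigl(\int_{\mathbb{R}^{d}_{+}}\|\varphi\|^{2}_{H^{s}_{\theta}}\,{\rm d}x\bigr)^{1/2}$ and the first factor is dominated by $\|u^{n}-u\|_{H^{s}_{\mathcal{L}}}\to0$; and the bulk transport term converges since $u^{n}\to u$ in $L^{2}_{x,\theta}$ (the $H^{s}_{\mathcal{L}}$-norm controls it through $H^{s}_{\theta}\hookrightarrow L^{2}_{\theta}$, cf. \eqref{2.12}) while $\theta\cdot\nabla_{x}\varphi\in L^{2}_{x,\theta}$ because $\varphi\in H^{1}_{x}$ and $|\theta|=1$.

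\textbf{Step 3 (boundary terms and the main point).} Let $K^{\pm}\subset\Gamma^{\pm}$ be a compact set containing the trace of ${\rm supp}\,\varphi$ on $\{x_{d}=0\}$ — available precisely because $\varphi$ is supported away from $\{\theta_{d}=0\}$. Then \eqref{contrace} (Proposition \ref{Traceprop}) gives $\gamma_{K^{\pm}}(u^{n})\to\gamma_{K^{\pm}}(u)$ in $L^{2}(K^{\pm})$, and since $\theta_{d}$ is bounded on $\mathbb{S}^{d-1}$ and $\gamma^{\pm}(\varphi)\in L^{2}(K^{\pm})$ (trace of an $H^{1}_{x}$ function), it follows that $\int_{\Gamma^{\pm}}\theta_{d}\,\gamma^{\pm}(u^{n})\,\varphi\to\int_{\Gamma^{\pm}}\theta_{d}\,\gamma^{\pm}(u)\,\varphi$. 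Letting $n\to\infty$ in the identity of Step 1 for $u^{n}$ yields the corollary. The only genuine difficulty — the reason one cannot simply integrate by parts directly against $u$ — is that for $u\in H^{s}_{\Gamma_{loc}}$ the traces $\gamma^{\pm}(u)$ exist only through the continuity extension of Proposition \ref{Traceprop}; the $H^{1}_{x}$-approximation combined with \eqref{contrace} on the compact set fixed by ${\rm supp}\,\varphi$ is exactly what turns that abstract trace into the concrete boundary integrals, and everything else in the argument is routine.
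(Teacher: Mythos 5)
Your proposal is correct and follows exactly the route the paper intends: the paper gives no separate proof of Corollary \ref{weakgreen}, treating it as immediate from the density statement of Theorem \ref{Tracetheorem} together with the classical Green's formula on $H^{1}_{x}\cap H^{s}_{\mathcal{L}}$ and the trace continuity \eqref{contrace}, which is precisely your Steps 1--3. The only cosmetic caveat is in Step 3: since $\varphi$ need not have compact support in $\bar{x}$, one should either invoke the fact (visible in the proof of Proposition \ref{Traceprop}, whose cut-off is in $\theta$ only) that the trace bound is global in $\bar{x}$ and local only in $\theta$ away from $\{\theta_{d}=0\}$, or insert an extra cut-off in $\bar{x}$ -- a harmless adjustment consistent with the paper's phrasing ``Green's identity on compact sets of $\Gamma^{\pm}$''.
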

\subsection{Absorbing boundary conditions.} We consider in this section the RTE with absorbing boundary, that is, the case where the input boundary intensity is null $g\equiv0$.  Non homogeneous boundary conditions follows from this case using classical arguments involving ``lifting'', refer for example to the classical reference \cite[Chapter XXI - section 4]{libro}.  In the case of absorbing boundary conditions, the natural domain of the operator $\mathcal{L}:\mathcal{D}(\mathcal{L})\rightarrow L^{2}_{x,\theta}$ is $$\mathcal{D}(\mathcal{L})= H^{s}_{\mathcal{L}}\cap\big\{ \gamma^{-}(\cdot)=0 \big\}\,.$$ 
In $\mathcal{D}(\mathcal{L})$ the Green's identity holds in the whole boundary since the traces belong to $L^{2}\big(\Gamma^{\pm},| \theta_{d} |\,{ \rm d}\theta\, {\rm d}\bar{x}\big)$.  Therefore, for any $u \in \mathcal{D}(\mathcal{L})$
\begin{align*}
\int_{\mathbb{R}^{d}_{+}}\int_{\mathbb{S}^{d-1}}\mathcal{L}(u)\,u \,{\rm d}\theta {\rm d}x & = -\int_{\mathbb{R}^{d}_{+}}\int_{\mathbb{S}^{d-1}}\mathcal{I}(u)\,u \, {\rm d}\theta {\rm d}x - \tfrac{1}{2}\int_{\Gamma^{+}} \big| \gamma^{+}(u) \big|^{2} \theta_{d}\, {\rm d}\theta {\rm d}\bar{x}\geq0\,.
\end{align*}
We conclude that $\langle \mathcal{L}(u),u\rangle_{L^{2}_{x,\theta}}\geq0$ and, consequently, $(-\mathcal{L})$ is a dissipative operator taking values on $L^{2}_{x,\theta}$.

\smallskip
\noindent
In order to apply Lumer--Phillips theorem it suffices to prove that the operator $(-\mathcal{L}) - \mu\,I$ is surjective on $L^{2}_{x,\theta}$ for some $\mu>0$.  But, this is exactly what we proved when we showed the existence of weak solutions in $\mathcal{D}(\mathcal{L})$ for the problem \eqref{limitprob.} with $f\in L^{2}_{x,\theta}$.  Thus, invoking the Lumer--Phillips theorem in reflexive spaces one proves the following theorem. 
\begin{thm}\label{contraction-semigroup}
The operator $(-\mathcal{L}):\mathcal{D}(\mathcal{L})\rightarrow L^{2}_{x,\theta}$ generates a contraction semigroup.
\end{thm}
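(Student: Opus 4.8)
The plan is to verify the three hypotheses of the Lumer--Phillips theorem for the operator $A:=-\mathcal{L}$ on the Hilbert space $H:=L^{2}_{x,\theta}=L^{2}(\R^{d}_{+}\times\mathbb{S}^{d-1})$ with domain $\mathcal{D}(\mathcal{L})=H^{s}_{\mathcal{L}}\cap\{\gamma^{-}(\cdot)=0\}$, namely: (i) $\mathcal{D}(\mathcal{L})$ is dense in $H$; (ii) $A$ is dissipative; and (iii) $\operatorname{ran}(\mu I-A)=H$ for some $\mu>0$. Once these hold, the Lumer--Phillips theorem (in its Hilbert-space form, a special case of the reflexive-space version) yields at once that $A$ is closed and generates a $C_{0}$-semigroup of contractions on $H$, which is exactly the claim.

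For (i), I would note that every $\varphi\in C^{\infty}_{c}(\R^{d}_{+}\times\mathbb{S}^{d-1})$, i.e.\ smooth with compact support in the \emph{open} half-space in the $x$-variable, belongs to $\mathcal{D}(\mathcal{L})$: it lies in $L^{2}_{x}\cap H^{s}_{\theta}$, the transport term $\theta\cdot\nabla_{x}\varphi$ is smooth and compactly supported hence in $L^{2}_{x,\theta}$, and $\mathcal{I}(\varphi)\in L^{2}_{x,\theta}$ because the singular kernel $b_{s}$ is integrable against the symmetrised second difference of a $C^{2}$ function when $s<1$; thus $\mathcal{L}(\varphi)\in L^{2}_{x,\theta}$, while $\gamma^{-}(\varphi)=0$ since $\varphi$ vanishes near $\{x_{d}=0\}$. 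As $C^{\infty}_{c}(\R^{d}_{+}\times\mathbb{S}^{d-1})$ is dense in $L^{2}_{x,\theta}$, so is $\mathcal{D}(\mathcal{L})$. For (ii), dissipativity is precisely the Green's-identity computation performed just above the statement: for $u\in\mathcal{D}(\mathcal{L})$,
\[
\langle A u,u\rangle_{L^{2}_{x,\theta}}=-\langle\mathcal{L}(u),u\rangle_{L^{2}_{x,\theta}}=\int_{\R^{d}_{+}}\!\int_{\mathbb{S}^{d-1}}\mathcal{I}(u)\,u\,\mathrm{d}\theta\,\mathrm{d}x+\tfrac{1}{2}\int_{\Gamma^{+}}\big|\gamma^{+}(u)\big|^{2}\,\theta_{d}\,\mathrm{d}\theta\,\mathrm{d}\bar{x}\le 0,
\]
using $\int_{\mathbb{S}^{d-1}}\mathcal{I}(u)\,u\,\mathrm{d}\theta\le0$ and $\theta_{d}<0$ on $\Gamma^{+}$.

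For (iii), I would choose $\mu>C$, where $C$ is the constant in the energy estimate from the proof of Theorem~\ref{Tracetheorem}, and quote that proof directly: for every $f\in L^{2}_{x,\theta}$ the stationary problem \eqref{limitprob.} — constructed there as the vanishing-viscosity limit of \eqref{approxprobv0} — has a unique weak solution $u\in H^{1}_{x}\cap H^{s}_{\mathcal{L}}\cap\{\gamma^{-}(\cdot)=0\}\subset\mathcal{D}(\mathcal{L})$ with $\mu u+\mathcal{L}(u)=f$. Hence $\mu I-A=\mu I+\mathcal{L}$ maps $\mathcal{D}(\mathcal{L})$ onto $L^{2}_{x,\theta}$. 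Combining (i)--(iii) and invoking Lumer--Phillips completes the proof.

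The only genuinely delicate ingredient lies upstream and is already established: one must know that the limit function solving \eqref{limitprob.} actually belongs to $\mathcal{D}(\mathcal{L})$ — that it admits a trace on $\Gamma^{-}$ and that this trace vanishes — which is what Theorem~\ref{Tracetheorem} and the trace bound \eqref{contrace} supply. Granting those, the present argument is a matter of assembling the pieces and citing the Lumer--Phillips theorem, so I would not expect any further obstacle.
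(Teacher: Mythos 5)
Your proposal is correct and follows essentially the same route as the paper: dissipativity of $-\mathcal{L}$ via the Green's identity computation on $\mathcal{D}(\mathcal{L})$, surjectivity of $\mu I+\mathcal{L}$ for $\mu>C$ via the stationary problem \eqref{limitprob.} solved in the proof of Theorem~\ref{Tracetheorem}, and then Lumer--Phillips. The only cosmetic difference is that you verify density of $\mathcal{D}(\mathcal{L})$ directly using smooth functions compactly supported in the open half-space, whereas the paper invokes the reflexive-space form of the Lumer--Phillips theorem, for which density of the domain of a dissipative operator satisfying the range condition is automatic.
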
  
\begin{cor}[Absorbing boundary]\label{existence-absorbing}
Consider $u_0\in L^{2}\big(\mathbb{R}^{d}\times\mathbb{S}^{d-1}\big)$ and $f\in L^{2}\big((0,T)\times\mathbb{R}^{d}\times\mathbb{S}^{d-1}\big)$.  Then, the problem
\begin{equation*}
\left\{
\begin{array}{cll}
\partial_t u + \mathcal{L}(u) = f & \text{in} & (0,T)\times \mathbb{R}^{d}_+\times \mathbb{S}^{d-1},\\
u=u_0 & \text{on} & \{t=0\}\times \mathbb{R}^{d}_+\times \mathbb{S}^{d-1},\\
u=0  & \text{on} & (0,T) \times \partial  \mathbb{R}^{d}_+\times \mathbb{S}^{d-1} \;\;  \text{and}\;\; -(\theta \cdot n(\bar{x}))>0,
\end{array}\right.
\end{equation*}
has a unique weak solution $u(t)\in\mathcal{C}\big([0,T], L^{2}(\mathbb{R}^{d}_{+}\times\mathbb{S}^{d-1})\big)$.  Its trace satisfies $\gamma^{+}(u)\in L^{2}\big((0,T)\times\Gamma^{+}, |\theta_{d}| {\rm d}\theta {\rm d}\bar{x}\big)$.  Furthermore, $u(t)\geq0$ if $u_0\geq0$ and $f\geq0$.
\end{cor}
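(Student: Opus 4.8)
The plan is to read off existence and uniqueness from the semigroup generated in Theorem~\ref{contraction-semigroup}, to obtain the outgoing trace from an energy identity combined with the trace continuity bound \eqref{contrace}, and to deduce nonnegativity from positivity of the resolvent of $\mathcal{L}$.

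\emph{Existence and uniqueness.} By Theorem~\ref{contraction-semigroup}, $(-\mathcal{L})$ generates a strongly continuous contraction semigroup $\{S(t)\}_{t\geq 0}$ on $L^{2}_{x,\theta}:=L^{2}(\mathbb{R}^{d}_{+}\times\mathbb{S}^{d-1})$. Since $f\in L^{2}((0,T)\times\mathbb{R}^{d}_{+}\times\mathbb{S}^{d-1})\hookrightarrow L^{1}(0,T;L^{2}_{x,\theta})$ for $T<\infty$, Duhamel's formula
\begin{equation*}
u(t)=S(t)u_{0}+\int_{0}^{t}S(t-\tau)f(\tau)\,\mathrm{d}\tau
\end{equation*}
defines an element of $\mathcal{C}([0,T];L^{2}_{x,\theta})$; this is the mild solution, and it coincides with the distributional weak solution of the Cauchy problem, the boundary condition $\gamma^{-}(u)=0$ being encoded in $\mathcal{D}(\mathcal{L})$ (and inherited in the limit for general data). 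Uniqueness in the weak class is immediate once the energy identity below is available: the difference $w$ of two weak solutions solves the homogeneous problem with $w(0)=0$ and $\gamma^{-}(w)=0$, hence $w\equiv 0$.

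\emph{Trace regularity.} I would first argue for data $u_{0}\in\mathcal{D}(\mathcal{L})$ and $f\in\mathcal{C}^{1}_{c}((0,T);\mathcal{D}(\mathcal{L}))$, for which $u$ is a strong solution with $u(t)\in\mathcal{D}(\mathcal{L})$ for every $t$; multiplying the equation by $u$, integrating over $\mathbb{R}^{d}_{+}\times\mathbb{S}^{d-1}\times(0,t)$ and using the Green identity preceding Theorem~\ref{contraction-semigroup} (equivalently Corollary~\ref{weakgreen}) together with $\gamma^{-}(u)=0$ gives
\begin{equation*}
\tfrac{1}{2}\|u(t)\|^{2}_{L^{2}_{x,\theta}}+\tfrac{1}{2}\int_{0}^{t}\!\!\int_{\Gamma^{+}}|\gamma^{+}(u)|^{2}\,|\theta_{d}|\,\mathrm{d}\theta\,\mathrm{d}\bar{x}\,\mathrm{d}\tau-\int_{0}^{t}\!\!\int_{\mathbb{R}^{d}_{+}}\!\!\int_{\mathbb{S}^{d-1}}\mathcal{I}(u)\,u\,\mathrm{d}\theta\,\mathrm{d}x\,\mathrm{d}\tau=\tfrac{1}{2}\|u_{0}\|^{2}_{L^{2}_{x,\theta}}+\int_{0}^{t}\!\!\langle f,u\rangle_{L^{2}_{x,\theta}}\,\mathrm{d}\tau .
\end{equation*}
Since $-\int\!\!\int\mathcal{I}(u)\,u\geq 0$ by \eqref{1.6} (using $b_{s}\geq 0$), Young's inequality on $\langle f,u\rangle$ and Grönwall yield a bound on $\sup_{[0,T]}\|u(t)\|^{2}_{L^{2}_{x,\theta}}+\int_{0}^{T}\!\!\int_{\Gamma^{+}}|\gamma^{+}(u)|^{2}|\theta_{d}|$ by $C\big(\|u_{0}\|^{2}_{L^{2}_{x,\theta}}+\|f\|^{2}_{L^{2}_{t,x,\theta}}\big)$. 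For general $(u_{0},f)$ one approximates by such smooth data, uses that $S(t)$ is a contraction and that the trace is continuous on compact subsets of $\Gamma^{\pm}$ by \eqref{contrace} (so $\gamma^{+}(u^{n})\to\gamma^{+}(u)$ in $L^{2}_{\mathrm{loc}}$), and passes to the limit with monotone convergence in the boundary integral; this gives $\gamma^{+}(u)\in L^{2}((0,T)\times\Gamma^{+};|\theta_{d}|\,\mathrm{d}\theta\,\mathrm{d}\bar{x})$ with the same bound.

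\emph{Nonnegativity and main obstacle.} It suffices to show $S(t)$ is positivity preserving, and for that that the resolvent $(\mu+\mathcal{L})^{-1}$ is positivity preserving for large $\mu>0$. Let $f\geq 0$ and let $u\in\mathcal{D}(\mathcal{L})\cap H^{1}_{x}$ solve $\mu u+\mathcal{L}(u)=f$ (as constructed in the proof of Theorem~\ref{Tracetheorem}); write $u_{-}:=\max(-u,0)\geq 0$. Testing against $-u_{-}$: one has $\langle u,-u_{-}\rangle=\|u_{-}\|^{2}_{L^{2}_{x,\theta}}$; the transport term, after integration by parts in $x$ and using $\gamma^{-}(u_{-})=0$, contributes $\tfrac{1}{2}\int_{\Gamma^{+}}|\gamma^{+}(u_{-})|^{2}|\theta_{d}|\geq 0$; and splitting $u=u_{+}-u_{-}$ in the weak form \eqref{1.6}, with $u_{+}u_{-}=0$ pointwise and $b_{s}\geq 0$ by \eqref{b-scattering}, one checks $\langle\mathcal{I}(u),u_{-}\rangle\geq 0$. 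Hence $\mu\|u_{-}\|^{2}_{L^{2}_{x,\theta}}\leq-\langle f,u_{-}\rangle\leq 0$, so $u\geq 0$; approximating $f\geq 0$ in $L^{2}_{x,\theta}$ by nonnegative $f^{n}\in H^{1}_{x}\cap L^{2}_{\theta}$ extends positivity of $(\mu+\mathcal{L})^{-1}$ to all of $L^{2}_{x,\theta}$, the exponential formula $S(t)=\lim_{n}\big(I+\tfrac{t}{n}\mathcal{L}\big)^{-n}$ gives $S(t)\geq 0$, and Duhamel with $u_{0}\geq 0$, $f\geq 0$ gives $u(t)\geq 0$. The step I expect to require the most care is making the energy/Green identity with the boundary flux term rigorous at the level of $L^{2}$ data, which forces the approximation argument and a careful use of Theorem~\ref{Tracetheorem} and of \eqref{contrace}; the chain-rule and trace manipulations for the truncation $u_{-}$ (legitimacy of $u_{-}\in H^{1}_{x}\cap H^{s}_{\theta}$ and of the integration by parts) are of the same nature and are handled in the same spirit.
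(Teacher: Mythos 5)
Your proposal is correct, and for existence, uniqueness and the outgoing-trace bound it follows essentially the paper's path: existence and uniqueness are read off from the contraction semigroup of Theorem \ref{contraction-semigroup} together with the Green identity, and the trace estimate comes from the energy inequality (the paper invokes \eqref{eq2.4} plus Gronwall, you rederive it by testing with $u$ and approximating the data, which is the same mechanism). Where you genuinely diverge is the nonnegativity part: you prove that the resolvent $(\mu+\mathcal{L})^{-1}$ is positivity preserving by testing the stationary problem $\mu u+\mathcal{L}(u)=f$ with the negative part, and then transfer positivity to the semigroup through the exponential formula $S(t)=\lim_n(I+\tfrac{t}{n}\mathcal{L})^{-n}$ and Duhamel; the paper instead works directly on the evolution, multiplying the equation by $u^{-}$, using the Green formula and the sign property of $\int_{\mathbb{S}^{d-1}}\mathcal{I}(u)\,u^{-}\,\mathrm{d}\theta$ (split $u$ into positive and negative parts, use \eqref{1.6} and $b_{s}\geq0$, together with $\|u^{\pm}\|_{H^{s}_{\theta}}\leq\|u\|_{H^{s}_{\theta}}$) to get a closed inequality for $\|u^{-}(t)\|_{L^{2}_{x,\theta}}^{2}$ forcing $u^{-}\equiv0$. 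The key algebraic ingredient is identical in both routes; your stationary argument has the advantage that positivity is established once and for all as a property of the generator (no chain rule or Green identity needed for the time-dependent truncation, and no regularity of $u(t)$ beyond the mild formulation), at the cost of invoking the Euler exponential formula and an approximation of $f$ by nonnegative data in $H^{1}_{x}\cap L^{2}_{\theta}$, while the paper's direct evolutionary estimate is shorter but must justify the Green formula for $u^{-}$ at the solution's level of regularity. The technical points you flag (legitimacy of $u_{-}\in H^{1}_{x}\cap H^{s}_{\theta}$, passage to the limit in the boundary integral via \eqref{contrace} and Theorem \ref{Tracetheorem}, weak lower semicontinuity rather than literal monotone convergence for the trace term) are real but routine and do not constitute gaps.
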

\begin{proof}
The statement about existence is direct from Theorem \ref{contraction-semigroup}.  Uniqueness follows from the fact that solutions with well defined traces in $L^{2}\big((0,T)\times\Gamma^{\pm}, |\theta_{d}| {\rm d}\theta {\rm d}\bar{x}\big)$ satisfy the Green's formula.

\smallskip
\noindent
Finally, in order to prove positivity we use estimate \eqref{eq2.4} and Gronwall's lemma to conclude that for any $t\in(0,T)$
\begin{equation*}
\begin{split}
\| u(t) \|^{2}_{L^{2}_{x,\theta}}& + D_0\int^{t}_{0}\int_{\R^{d}_{+}}\norm{u}^{2}_{H^{s}_{\theta}} \; \mathrm{d}x\; \mathrm{d}\tau \\
& + \tfrac{1}{2}\int^{t}_{0}\int_{\Gamma^{+}} | \gamma^{+}(u) |^{2}\, | \theta_{d} |\, \mathrm{d}\theta\; \mathrm{d}\bar{x}\; \mathrm{d}\tau \leq C(T)\Big( \| u_0 \|^{2}_{L^{2}_{x,\theta}} + \int^{t}_{0} \| f(\tau) \|^{2}_{L^{2}_{x,\theta}} \mathrm{d}\tau\Big). 
\end{split}
\end{equation*}
Now, for $s\in(0,1)$ it follows that if $u(x,\cdot)\in H^{s}_{\theta}$, then the positive and negative parts of $u$ satisfy $u^{\pm}\in H^{s}_{\theta}$ as well\footnote{In fact, one has $\|u^{\pm}\|_{H^{s}_{\theta}}\leq \|u\|_{H^{s}_{\theta}}$.}.  Then,
\begin{equation*}
\int_{\mathbb{S}^{d-1}} \mathcal{I}(u)u^{-}{\rm d}\theta = \int_{\mathbb{S}^{d-1}} \mathcal{I}(u^{+})u^{-}{\rm d}\theta + \int_{\mathbb{S}^{d-1}} \mathcal{I}(u^{-})u^{-}{\rm d}\theta \leq 0\,.
\end{equation*}
The fact that both terms in the left side are non positive is a direct consequence of the weak formulation \eqref{1.6}.  Applying the Green's formula again, it follows that for $t\in(0,T)$
\begin{equation*}
\begin{split}
\| u^{-}(t) \|^{2}_{L^{2}_{x,\theta}} + \tfrac{1}{2}\int^{t}_{0}\int_{\Gamma^{+}} | \gamma^{+}(u^{-}) |^{2}\, | \theta_{d} |\, \mathrm{d}\theta\; \mathrm{d}\bar{x}\; \mathrm{d}\tau \leq \| u^{-}_0\|^{2}_{L^{2}_{x,\theta}} + \int^{t}_{0}\int_{\mathbb{R}^{d}_{+}}\int_{\mathbb{S}^{d-1}} f\,u^{-} {\rm d} \theta {\rm d} x\mathrm{d}\tau. 
\end{split}
\end{equation*}
If $u_0\geq0$ and $f\geq0$, the left side is non positive.  One is led to conclude that $u^{-}\equiv0$.
\end{proof}

\noindent
\textbf{Acknowledgments}. R. Alonso gratefully acknowledge the support from Bolsa de Produtividade em Pesquisa CNPq. 
E. Cuba has been supported in part by Coordena\c{c}\~ao de Aperfei\c{c}oamento de Pessoal de N\'ivel Superior-Brasil (CAPES), PETROBRAS and PUC-Rio postgraduate grants.
\bigskip
\appendix
\section{Appendix}
\begin{prop}\label{app-inter-x-theta}
Let $s,s'>0$ and $\varphi\in H^{s}_{\theta} \cap H^{s'}_{x}$.  Then, for some $r:=r(s,s',d)>2$ and $\alpha:=\alpha(s,s',d)\in(0,1)$ it follows that
\begin{equation*}
\| \varphi \|_{L^{r}_{x,\theta}}\leq C\bigg(\int_{\R^{d}}\| \varphi(x,\cdot) \|^{2}_{H^{s}_{\theta}}{\rm d}x\bigg)^{\frac{\alpha}{2}}\bigg(\int_{\mathbb{S}^{d-1}}\big\|(-\Delta_x)^{s'/2} \varphi(\cdot, \theta) \big\|^{2}_{L^2_x}{\rm d}\theta\bigg)^{\frac{1-\alpha}{2}}\,.
\end{equation*} 
for some $C:=C(s,s',d)$
\end{prop}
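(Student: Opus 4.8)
The plan is to upgrade the two separate one–variable regularities to joint $L^{r}$–integrability in $(x,\theta)$ by a H\"older interpolation in the mixed Lebesgue scale $L^{a}_{x}(L^{b}_{\theta})$. First I would record the two building blocks. By the Sobolev embedding on the sphere, already in the paper as \eqref{2.14} (recall $H^{s}_{\theta}$ is equivalent to the Gagliardo space $H^{s}(\mathbb{S}^{d-1})$ by \eqref{2.12}), for a.e.\ $x$ one has $\norm{\varphi(x,\cdot)}_{L^{p}_{\theta}}\leq C\norm{\varphi(x,\cdot)}_{H^{s}_{\theta}}$ with $\tfrac1p=\tfrac12-\tfrac{s}{d-1}$; squaring and integrating in $x$,
\[
\norm{\varphi}_{L^{2}_{x}(L^{p}_{\theta})}\leq C\,M_{1},\qquad M_{1}\defi\Big(\int_{\R^{d}}\norm{\varphi(x,\cdot)}^{2}_{H^{s}_{\theta}}\,{\rm d}x\Big)^{1/2}.
\]
In parallel, the homogeneous Sobolev embedding $\dot H^{s'}(\R^{d})\hookrightarrow L^{q}(\R^{d})$ with $\tfrac1q=\tfrac12-\tfrac{s'}{d}$, applied for a.e.\ $\theta$ and integrated, gives
\[
\norm{\varphi}_{L^{2}_{\theta}(L^{q}_{x})}\leq C\,M_{2},\qquad M_{2}\defi\Big(\int_{\mathbb{S}^{d-1}}\norm{(-\Delta_{x})^{s'/2}\varphi(\cdot,\theta)}^{2}_{L^{2}_{x}}\,{\rm d}\theta\Big)^{1/2},
\]
and, since $q\geq2$, Minkowski's integral inequality turns this into $\norm{\varphi}_{L^{q}_{x}(L^{2}_{\theta})}\leq C\,M_{2}$. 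Note $p,q>2$.

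For the interpolation I would put
\[
\alpha\defi\frac{p(q-2)}{p(q-2)+q(p-2)}\in(0,1),\qquad \frac1r\defi\frac{\alpha}{p}+\frac{1-\alpha}{2},
\]
and verify the elementary algebra: $\tfrac1r=\tfrac{\alpha}{2}+\tfrac{1-\alpha}{q}$ as well, $2<r<\min\{p,q\}$, $r\alpha<2$, and the key identity $\tfrac{2r(1-\alpha)}{2-r\alpha}=q$ (all of these reduce to $(p-2)(q-2)>0$ or to $\alpha<1$). Then, for a.e.\ fixed $x$, H\"older on $\mathbb{S}^{d-1}$ interpolating $L^{r}_{\theta}$ between $L^{2}_{\theta}$ and $L^{p}_{\theta}$ (using $\tfrac1r=\tfrac{1-\alpha}{2}+\tfrac{\alpha}{p}$) gives $\norm{\varphi(x,\cdot)}_{L^{r}_{\theta}}\leq\norm{\varphi(x,\cdot)}^{1-\alpha}_{L^{2}_{\theta}}\norm{\varphi(x,\cdot)}^{\alpha}_{L^{p}_{\theta}}$. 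Raising to the $r$-th power, integrating in $x$, and splitting by H\"older in $x$ with the conjugate pair $\big(\tfrac{2}{2-r\alpha},\tfrac{2}{r\alpha}\big)$, the $L^{p}_{\theta}$–factor produces $\norm{\varphi}^{r\alpha}_{L^{2}_{x}(L^{p}_{\theta})}$ while the $L^{2}_{\theta}$–factor produces $\big(\int_{x}\norm{\varphi(x,\cdot)}^{q}_{L^{2}_{\theta}}\,{\rm d}x\big)^{(2-r\alpha)/2}=\norm{\varphi}^{r(1-\alpha)}_{L^{q}_{x}(L^{2}_{\theta})}$, the second exponent matching precisely because of the identity $\tfrac{2r(1-\alpha)}{2-r\alpha}=q$. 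Taking $r$-th roots and inserting the two bounds above,
\[
\norm{\varphi}_{L^{r}_{x,\theta}}\leq\norm{\varphi}^{\alpha}_{L^{2}_{x}(L^{p}_{\theta})}\norm{\varphi}^{1-\alpha}_{L^{q}_{x}(L^{2}_{\theta})}\leq C\,M_{1}^{\alpha}M_{2}^{1-\alpha},
\]
which is the claim with this $r$ and $\alpha$.

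I do not anticipate a real obstacle: all steps are standard embeddings or H\"older, and the only delicate point is the bookkeeping that makes one single pair $(r,\alpha)$ fit both interpolation steps, landing on exactly the two mixed norms that the Sobolev embeddings control, together with the routine size/sign checks $2<r<\min\{p,q\}$, $r\alpha<2$, $q\geq2$. The one structural restriction is sub-criticality of the two embeddings, $s<\tfrac{d-1}{2}$ and $s'<\tfrac d2$; both hold in every application in this paper (there $s\in(0,1)$, $s'=s_{0}$ is small, $d\geq3$), and for larger $s$ one may first replace $s$ by any smaller exponent via $H^{s}_{\theta}\hookrightarrow H^{\sigma}_{\theta}$ for $\sigma\leq s$.
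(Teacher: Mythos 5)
Your argument is correct and is essentially the paper's own proof: the same two Sobolev embeddings ($H^{s}_{\theta}\hookrightarrow L^{p}_{\theta}$ and $\dot H^{s'}_{x}\hookrightarrow L^{q}_{x}$), a H\"older interpolation in $\theta$ followed by H\"older in $x$, and Minkowski's integral inequality to pass between $L^{q}_{x}(L^{2}_{\theta})$ and $L^{2}_{\theta}(L^{q}_{x})$, with your $(r,\alpha)$ coinciding exactly with the paper's $r=p\alpha_{1}+2(1-\alpha_{1})$ and $\alpha=2\alpha_{2}/r$. The only difference is bookkeeping (you fix $\alpha$ first and apply Minkowski before the H\"older step rather than after), plus the explicit mention of the sub-criticality $s<\tfrac{d-1}{2}$, $s'<\tfrac d2$, which the paper leaves implicit.
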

\begin{proof}
Recall that by \eqref{2.14} and Sobolev imbedding we have that 
\begin{align*}
\int_{\R^d}\|\varphi(x, \cdot)\|_{H^s_\theta}^2 {\rm d }x &\geq c \int_{\R^d}\Big(\int_{\mathbb{S}^{d-1}} \big| \varphi(x, \theta) \big|^{p} {\rm d}\theta\Big)^{\frac{2}{p}}{\rm d}x\,, \\
\int_{\mathbb{S}^{d-1}}\|(-\Delta_x)^{s'/2} \varphi(\cdot, \theta)\|_{L^2_x}^2 {\rm d}\theta &\geq c \int_{\mathbb{S}^{d-1}}\left(\int_{\mathbb{R}^{d}} \big|\varphi(x, \theta)\big|^{q} {\rm d}x\right)^{\frac{2}{q}} {\rm }d\theta\,,
\end{align*}
where $c:=c(s,s',d)$ and
\begin{equation*} 
\frac{1}{q} = \frac 12 - \frac{s'}{d} \,, \qquad \frac{1}{p} = \frac 12 - \frac{s}{d-1} \,.
\end{equation*}
Observe that $q,\,p>2$.  Set
\begin{equation}\label{alpha12}
\alpha_1 = \frac{q - 2}{ \frac{p}{2} \, q - 2} \in (0, 1) \,, \qquad \alpha_2 = \frac{p}{2} \, \alpha_1 \in (0, 1) \,, \qquad  r = p \, \alpha_1 + 2(1 - \alpha_1) > 2 \,,
\end{equation} 
so that 
\begin{equation*}
\frac{\alpha_1}{\alpha_2} = \frac{2}{p} \,,\qquad \frac{1-\alpha_1}{1-\alpha_2} = \frac{q}{2}>1 \,.
\end{equation*}
Then, using H\"{o}der inequality we have that
\begin{align*}
\int_{\R^d} \int_{\mathbb{S}^{d-1}}& \big|\varphi(x,\theta) \big|^r {\rm d}\theta \, {\rm d}x\\
&\leq \int_{\R^d}\left(\int_{\mathbb{S}^{d-1}} \big|\varphi(x,\theta)\big|^{p} {\rm d}\theta\right)^{\alpha_1}\left(\int_{\mathbb{S}^{d-1}} \big|\varphi(x,\theta)\big|^{2} {\rm d}\theta\right)^{1-\alpha_1}{\rm d}x\\
&\leq\left(\int_{\R^d}\left(\int_{\mathbb{S}^{d-1}}\big| \varphi(x,\theta) \big|^{p} {\rm d}\theta\right)^{\frac{\alpha_1}{\alpha_2}}{\rm d}x\right)^{\alpha_2}\left(\int_{\R^{d}}\left(\int_{\mathbb{S}^{d-1}} \big|\varphi(x,\theta)\big|^{2} {\rm d}\theta\right)^{\frac{1-\alpha_1}{1-\alpha_{2}}}{\rm d}x\right)^{1-\alpha_{2}}\\
&=\left(\int_{\R^d}\left(\int_{\mathbb{S}^{d-1}} |\varphi(x,\theta)\big|^{p} {\rm d}\theta\right)^{\frac{2}{p}}{\rm d}x\right)^{\alpha_2}\left(\int_{\R^{d}}\left(\int_{\mathbb{S}^{d-1}} \big|\varphi(x,\theta)\big|^{2} {\rm d}\theta\right)^{\frac{q}{2}}{\rm d}x\right)^{1-\alpha_{2}}
\\
&\leq C \left(\int_{\R^d} \|\varphi(x,\cdot)\|_{H^s_\theta}^2 {\rm d}x\right)^{\alpha_2}\left(\int_{\mathbb{S}^{d-1}}\left(\int_{\R^{d}} \big|\varphi(x,\theta)\big|^{q} {\rm d}x\right)^{\frac{2}{q}}{\rm d}\theta\right)^{\frac{q}{2}(1-\alpha_{2})}\\
&\leq C \left(\int_{\R^d} \|\varphi(x,\cdot)\|_{H^s_\theta}^2 {\rm d}x\right)^{\alpha_2}\left(\int_{\mathbb{S}^{d-1}} \big\|(-\Delta_x)^{s'/2} \varphi(\cdot, \theta) \big\|^{2}_{L^2_x} {\rm d}\theta\right)^{\frac{q}{2}(1-\alpha_{2})}\,.
\end{align*}
We used Minkowski's integral inequality for the second term just after the equality.  With the definitions \eqref{alpha12} it is easy to check that
\begin{equation*}
2\alpha_{2} + q(1-\alpha_{2})=r\,,
\end{equation*} 
thus, the result follows setting $\alpha=\frac{2\alpha_{2}}{r}$.
\end{proof}

\begin{prop}\label{prop-app1}
The following estimate holds for any $l\geq0$ and $\epsilon>0$
\end{prop}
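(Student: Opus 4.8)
The plan is to read the statement as a (fractional) product estimate for $\theta_d\,(-\Delta_x)^{l/2}(\phi'_\epsilon u)$: since $(-\Delta_x)^{l/2}$ is nonlocal one cannot simply commute it past the cut-off factor $\phi'_\epsilon$, so rather than isolate a commutator I would control the whole product directly by a Kato--Ponce type mechanism, paying the powers of $\epsilon$ that arise from differentiating the rescaled bump $\phi'_\epsilon(x_d)=\epsilon^{-1}\phi'(x_d/\epsilon)$. The target is $\norm{\theta_d\,(-\Delta_x)^{l/2}(\phi'_\epsilon u)}_{L^2_x}^{2}\lesssim_\phi \epsilon^{-2}\norm{(-\Delta_x)^{l/2}(\phi_{\epsilon/2}u)}_{L^2_x}^{2}+\epsilon^{-2(1+l)}\norm{\phi_{\epsilon/2}u}_{L^2_x}^{2}$, which for $\epsilon\in(0,1)$ collapses to the stated form $\epsilon^{-2(1+l)}\big(\norm{\phi_{\epsilon/2}u}_{L^2_x}^{2}+\norm{(-\Delta_x)^{l/2}(\phi_{\epsilon/2}u)}_{L^2_x}^{2}\big)$.

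First I would localize. Since $\operatorname{supp}\phi'_\epsilon\subset\{\epsilon\le x_d\le 2\epsilon\}$ while $\phi_{\epsilon/2}(x_d)=\phi(2x_d/\epsilon)\equiv1$ on $\{x_d\ge\epsilon\}$, one has $\phi'_\epsilon u=\phi'_\epsilon(\phi_{\epsilon/2}u)=:\phi'_\epsilon w$ with $w:=\phi_{\epsilon/2}u$, and because $|\theta_d|\le1$ it suffices to bound $\norm{(-\Delta_x)^{l/2}(\phi'_\epsilon w)}_{L^2_x}$. On the Fourier side, writing $x=(\bar x,x_d)$ and $\xi=(\bar\xi,\xi_d)$ and noting that $\phi'_\epsilon$ depends on $x_d$ only, one has $\mathcal{F}_x\{\phi'_\epsilon w\}(\xi)=\int_{\mathbb{R}}\widehat{m_\epsilon}(\xi_d-\zeta)\,\mathcal{F}_x\{w\}(\bar\xi,\zeta)\,\mathrm d\zeta$, a convolution in the single slot $\xi_d$ with the one-dimensional kernel $\widehat{m_\epsilon}(\xi_d)=\widehat{\phi'}(\epsilon\xi_d)$. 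Using $|\xi|^l=|(\bar\xi,\xi_d)|^l\lesssim_l |(\bar\xi,\zeta)|^l+|\xi_d-\zeta|^l$ one splits $|\xi|^l\,|\mathcal{F}_x\{\phi'_\epsilon w\}(\xi)|$ into a piece dominated by $|\widehat{m_\epsilon}|$ convolved in $\xi_d$ with $|(\bar\xi,\cdot)|^l|\mathcal{F}_x\{w\}(\bar\xi,\cdot)|$ and a piece dominated by $\big(|\cdot|^l|\widehat{m_\epsilon}|\big)$ convolved with $|\mathcal{F}_x\{w\}(\bar\xi,\cdot)|$. Young's inequality $L^1\ast L^2\to L^2$ in $\xi_d$, followed by $L^2$ in $\bar\xi$ and Plancherel, then yields $\norm{(-\Delta_x)^{l/2}(\phi'_\epsilon w)}_{L^2_x}\lesssim \norm{\widehat{m_\epsilon}}_{L^1(\mathbb{R})}\norm{(-\Delta_x)^{l/2}w}_{L^2_x}+\norm{|\cdot|^l\widehat{m_\epsilon}}_{L^1(\mathbb{R})}\norm{w}_{L^2_x}$. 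A scaling computation gives $\norm{\widehat{m_\epsilon}}_{L^1(\mathbb{R})}=\epsilon^{-1}\norm{\widehat{\phi'}}_{L^1(\mathbb{R})}$ and $\norm{|\cdot|^l\widehat{m_\epsilon}}_{L^1(\mathbb{R})}=\epsilon^{-1-l}\norm{|\cdot|^l\widehat{\phi'}}_{L^1(\mathbb{R})}$, both finite since $\phi'\in C^\infty_c(\mathbb{R})$ makes $\widehat{\phi'}$ Schwartz; collecting constants into $C_\phi$, squaring, and reinstating $|\theta_d|\le1$ delivers the asserted bound with $w=\phi_{\epsilon/2}u$.

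The difficulty is bookkeeping rather than depth: $(-\Delta_x)^{l/2}$ does not commute with multiplication by $\phi'_\epsilon$, so the estimate cannot be reduced to $\norm{\phi'_\epsilon(-\Delta_x)^{l/2}u}_{L^2_x}$; and since $\phi'_\epsilon$ is a function of $x_d$ alone, its Fourier transform concentrates on $\{\bar\xi=0\}$, so the fractional Leibniz splitting has to be organized as a one-dimensional convolution in $\xi_d$. Getting the $\epsilon$-powers right there — one factor $\epsilon^{-1}$ from the $L^\infty$-size of $\phi'_\epsilon$ and an additional $\epsilon^{-l}$ when the fractional derivative lands on the bump — is the only delicate point. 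If one prefers to avoid the Fourier computation, the same conclusion follows from the Kato--Ponce inequality with one factor in $L^\infty$ together with the scaling bounds $\norm{\phi'_\epsilon}_{W^{m,\infty}}\lesssim_\phi\epsilon^{-1-m}$ for integer $m\ge\lceil l\rceil$.
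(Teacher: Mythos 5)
Your proof is correct, and it reaches exactly the stated bound (first term $\epsilon^{-1}C_\phi\|(-\Delta_x)^{l/2}(\phi_{\epsilon/2}u)\|_{L^2_x}$, second term $\epsilon^{-1-l}C_\phi\|\phi_{\epsilon/2}u\|_{L^2_x}$), but by a genuinely different route from the paper. The paper works in physical space: after the same localization $\phi'_\epsilon u=\epsilon^{-1}\phi'(\cdot/\epsilon)\,\phi_{\epsilon/2}u$, it splits $l=\lfloor l\rfloor+\alpha$, treats the integer part by the Leibniz rule, and for the fractional part $\alpha\in(0,1)$ uses the singular-integral representation of $(-\Delta_x)^{\alpha/2}$ to isolate a commutator, which is estimated by a near/far splitting of the kernel ($|y|\le\epsilon$ via the Lipschitz bound $\epsilon^{-1}\|\phi''\|_\infty$, $|y|>\epsilon$ via $\|\phi'\|_\infty$, each producing $\epsilon^{-\alpha}$), and then recombines integer and fractional pieces by interpolation. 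You instead exploit that $\phi'_\epsilon$ depends only on $x_d$, so on the Fourier side multiplication becomes a one-dimensional convolution in $\xi_d$ with kernel $\widehat{\phi'}(\epsilon\,\cdot)$; the splitting $|\xi|^l\lesssim_l|(\bar\xi,\zeta)|^l+|\xi_d-\zeta|^l$, Young's inequality $L^1\ast L^2\to L^2$ in $\xi_d$, Plancherel, and the exact scalings $\|\widehat{\phi'}(\epsilon\,\cdot)\|_{L^1}=\epsilon^{-1}\|\widehat{\phi'}\|_{L^1}$, $\||\cdot|^l\widehat{\phi'}(\epsilon\,\cdot)\|_{L^1}=\epsilon^{-1-l}\||\cdot|^l\widehat{\phi'}\|_{L^1}$ give the claim in one stroke, uniformly in $l\ge0$, with no integer/fractional decomposition and no interpolation step. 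This is compatible with the paper's conventions because both $\phi'_\epsilon u$ and $\phi_{\epsilon/2}u$ are supported away from $\{x_d=0\}$, so the extension-by-zero definition of $(-\Delta_x)^{l/2}$ and whole-space Plancherel apply without boundary terms (which you implicitly use and could state explicitly). Two cosmetic remarks: the constant inevitably depends on $l$ (through $C_l$ and $\||\cdot|^l\widehat{\phi'}\|_{L^1}$), just as the paper's implicitly does; and the $\theta_d$ factor you carry is harmless since $|\theta_d|\le1$ and does not appear in the proposition itself. Your closing Kato--Ponce remark is only a sketch, but the Fourier argument stands on its own.
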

\begin{equation*}
\| (-\Delta_{x})^{l/2}(\phi'_{\epsilon}u)\|_{L^{2}_{x}}\leq \epsilon^{-1}C_{\phi} \| (-\Delta_{x})^{l/2}(\phi_{\epsilon/2}u)\|_{L^{2}_{x}} + \epsilon^{-1-l}C_{\phi} \|\phi_{\epsilon/2}u\|_{L^{2}_{x}}\,.
\end{equation*}
\begin{proof}
For $l\in\mathbb{N}$ the result is clear.  For a general $l>0$ one writes it as $l = \lfloor l \rfloor + \alpha$ with $\alpha\in(0,1)$.  In addition, observe that
\begin{equation*}
\phi'_{\epsilon}u = \epsilon^{-1}\phi'(x/\epsilon)u = \epsilon^{-1}\phi'(x/\epsilon) (\phi_{\epsilon/2}u)\,.
\end{equation*}
Therefore, with the notation $\psi_{\epsilon}(x)=\phi'(x/\epsilon)$, it follows that
\begin{equation*}
\begin{split}
(-\Delta_{x})^{\alpha/2}(\phi'_{\epsilon}u)& = \epsilon^{-1}\psi_{\epsilon}\, (-\Delta_{x})^{\alpha/2}(\phi_{\epsilon/2}u) \\
&\qquad + c_{d,s}\epsilon^{-1}\int_{\mathbb{R}^{d}}\frac{\psi_{\epsilon}(x) - \psi_{\epsilon}(x-y)}{|y|^{d+\alpha}}(\phi_{\epsilon/2}u)(x-y)\text{d}y\,.
\end{split}
\end{equation*}
For the last term in the right it follows that
\begin{equation*}
\begin{split}
\epsilon^{-1}\int_{\mathbb{R}^{d}}&\frac{\psi_{\epsilon}(x) - \psi_{\epsilon}(x-y)}{|y|^{d+\alpha}}(\phi_{\epsilon/2}u)(x-y)\text{d}y=\\
&\qquad\epsilon^{-1}\bigg(\int_{|y|\leq\epsilon}+\int_{|y|>\epsilon}\bigg)\frac{\psi_{\epsilon}(x) - \psi_{\epsilon}(x-y)}{|y|^{d+\alpha}}(\phi_{\epsilon/2}u)(x-y)\text{d}y\,.
\end{split}
\end{equation*}
For the latter integral one has that
\begin{equation*}
\bigg|\int_{|y|>\epsilon}\frac{\psi_{\epsilon}(x) - \psi_{\epsilon}(x-y)}{|y|^{d+\alpha}}(\phi_{\epsilon/2}u)(x-y)\text{d}y\bigg| \leq 2\|\phi'\|_{\infty} \int_{\R^{d}}\frac{\textbf{1}_{|y|>\epsilon}}{|y|^{d+\alpha}}\big| (\phi_{\epsilon/2}u)(x-y) \big|\text{d}y\,.
\end{equation*}
Since $\int |y|^{-d-\alpha}\textbf{1}_{|y|>\epsilon}\sim \epsilon^{-\alpha}$, the $L^{2}_{x}$-norm of this term is controlled by $\epsilon^{-\alpha}C_{\phi}\|\phi_{\epsilon/2}u\|_{2}$.  For the first integral one applies the fact that $\psi_{\epsilon}$ is Lipschitz with constant $\epsilon^{-1}\|\phi''\|_{\infty}$.  As a consequence, 
\begin{equation*}
\bigg|\int_{|y|\leq\epsilon}\frac{\psi_{\epsilon}(x) - \psi_{\epsilon}(x-y)}{|y|^{d+\alpha}}(\phi_{\epsilon/2}u)(x-y)\text{d}y\bigg| \leq \epsilon^{-1}\|\phi''\|_{\infty} \int_{\R^{d}}\frac{\textbf{1}_{|y|>\epsilon}}{|y|^{d+\alpha-1}}\big| (\phi_{\epsilon/2}u)(x-y) \big|\text{d}y\,.
\end{equation*}
Since $\int |y|^{-d-\alpha+1}\textbf{1}_{|y|\leq\epsilon}\sim \epsilon^{1-\alpha}$, the $L^{2}_{x}$-norm of this term is controlled by $\epsilon^{-\alpha}C_{\phi}\|\phi_{\epsilon/2}u\|_{2}$.  Overall these estimates prove that
\begin{equation}\label{app-e1}
\| (-\Delta_{x})^{\alpha/2}(\phi'_{\epsilon}u)\|_{L^{2}_{x}}\leq \epsilon^{-1}C_{\phi} \| (-\Delta_{x})^{\alpha/2}(\phi_{\epsilon/2}u)\|_{L^{2}_{x}} + \epsilon^{-1-\alpha}C_{\phi} \|\phi_{\epsilon/2}u\|_{L^{2}_{x}}\,.
\end{equation}
For the general case note that $(-\Delta_{x})^{l/2} = (-\Delta_{x})^{\alpha/2}(-\Delta_{x})^{\lfloor l \rfloor/2}$.  One can use the product rule to treat the operator $(-\Delta_{x})^{\lfloor l \rfloor/2}$, then use estimate \eqref{app-e1} and interpolation to conclude.
\end{proof}
\begin{prop}[Commutator]\label{prop-app2}
Fix dimension $d\geq2$, $\alpha\in(0,1)$, and $l, k\geq0$.  Then, for any suitable $\varphi$
\begin{equation}\label{commutator}
\big( 1-\Delta_{v} \big)^{\alpha/2}\big(\langle v \rangle^{2l}\varphi\big)= \langle v \rangle^{2l}\big( 1-\Delta_{v} \big)^{\alpha/2}\varphi + \mathcal{R}(\varphi)\,,
\end{equation}
where the operator $\mathcal{R}$ satisfies the estimate
\begin{equation}\label{commutator-rem}
\big\| \langle \cdot \rangle^{-2k}\mathcal{R}(\varphi) \big\|_{L^{2}_{v}}\leq C_{d,\alpha,l,k}\| \langle v \rangle^{\max\{0,2l -1\}-2k}\varphi \big\|_{L^{2}_{v}}\,.
\end{equation}
\end{prop}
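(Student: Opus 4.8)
The plan is to write $\mathcal{R}(\varphi) = (1-\Delta_{v})^{\alpha/2}\big(\langle v\rangle^{2l}\varphi\big) - \langle v\rangle^{2l}(1-\Delta_{v})^{\alpha/2}\varphi$ and to prove \eqref{commutator-rem} first for $\varphi$ in a dense class (say Schwartz), extending to general suitable $\varphi$ by density; the identity \eqref{commutator} then merely defines $\mathcal{R}$. The main device is Bochner subordination of the heat semigroup: since $\tfrac{\alpha}{2}\in(0,\tfrac12)$, one has
\[
(1-\Delta_{v})^{\alpha/2} = c_{\alpha}\int_{0}^{\infty}\big(\mathrm{Id} - \mathrm{e}^{-t}\mathrm{e}^{t\Delta_{v}}\big)\,t^{-1-\alpha/2}\,\mathrm{d}t,\qquad c_{\alpha}>0.
\]
Inside the commutator the two copies of the identity operator cancel, leaving
\[
\mathcal{R}(\varphi) = -\,c_{\alpha}\int_{0}^{\infty}\mathrm{e}^{-t}\,\big[\mathrm{e}^{t\Delta_{v}},\langle v\rangle^{2l}\big]\varphi\;t^{-1-\alpha/2}\,\mathrm{d}t,
\]
so everything reduces to a quantitative estimate for the heat-semigroup commutator, uniform enough in $t$ that the $t$-integral converges against the regularizing factor $\mathrm{e}^{-t}t^{-1-\alpha/2}$.

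Next I would write the heat commutator as an integral operator with the Gaussian kernel $G_{t}$ of $\mathrm{e}^{t\Delta_{v}}$ on $\mathbb{R}^{d-1}$:
\[
\big[\mathrm{e}^{t\Delta_{v}},\langle v\rangle^{2l}\big]\varphi(v) = \int_{\mathbb{R}^{d-1}}G_{t}(v-v')\big(\langle v'\rangle^{2l}-\langle v\rangle^{2l}\big)\varphi(v')\,\mathrm{d}v'.
\]
The pointwise input is the elementary bound $\big|\langle v'\rangle^{2l}-\langle v\rangle^{2l}\big|\le C_{l}\,|v-v'|\,(\langle v\rangle+\langle v'\rangle)^{m}$ with $m:=\max\{0,2l-1\}$: for $2l\le1$ this is just the Lipschitz estimate, since $\nabla\langle\cdot\rangle^{2l}$ is bounded, while for $2l\ge1$ it follows from the mean value theorem and monotonicity of $\langle\cdot\rangle^{2l-1}$. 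Using the comparisons $\langle v\rangle\lesssim\langle v'\rangle\langle v-v'\rangle$ and $\langle v'\rangle\lesssim\langle v\rangle\langle v-v'\rangle$ to transfer all powers of $\langle v\rangle$ onto the $v'$-variable, one bounds $\langle v\rangle^{-2k}\big|[\mathrm{e}^{t\Delta_{v}},\langle v\rangle^{2l}]\varphi(v)\big|$ by $\big(K_{t}*(\langle\cdot\rangle^{m-2k}|\varphi|)\big)(v)$, where $K_{t}(z)=G_{t}(z)\,|z|\,\langle z\rangle^{N}$ for an explicit exponent $N=N(l,k)\ge0$.

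Finally, Young's convolution inequality gives $\big\|\langle\cdot\rangle^{-2k}[\mathrm{e}^{t\Delta_{v}},\langle\cdot\rangle^{2l}]\varphi\big\|_{L^{2}_{v}}\le\|K_{t}\|_{L^{1}}\,\|\langle\cdot\rangle^{m-2k}\varphi\|_{L^{2}_{v}}$, and the rescaling $z=\sqrt{t}\,w$ yields $\|K_{t}\|_{L^{1}}\le C\sqrt{t}\,\max\{1,t^{N/2}\}$. Inserting this into the subordination formula and using Minkowski's integral inequality,
\[
\big\|\langle\cdot\rangle^{-2k}\mathcal{R}(\varphi)\big\|_{L^{2}_{v}}\le C\,\|\langle\cdot\rangle^{m-2k}\varphi\|_{L^{2}_{v}}\int_{0}^{\infty}\mathrm{e}^{-t}\,t^{-1-\alpha/2}\,\|K_{t}\|_{L^{1}}\,\mathrm{d}t,
\]
and the last integral is finite: near $t=0$ the integrand is $O\big(t^{-(1+\alpha)/2}\big)$, integrable exactly because $\alpha<1$, and near $t=\infty$ the factor $\mathrm{e}^{-t}$ absorbs the polynomial growth. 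This is \eqref{commutator-rem}, with $C_{d,\alpha,l,k}$ collecting the constants above. The only genuinely delicate point is the third step: keeping track of how many powers of $\langle v-v'\rangle$ are produced when the weights $\langle v\rangle^{-2k}$ and $\langle v\rangle^{m}$ are moved onto $v'$, and checking they remain harmless once multiplied by the Gaussian and by $\mathrm{e}^{-t}t^{-1-\alpha/2}$; the split $2l\ge1$ versus $2l<1$ in the weight-difference bound is precisely what accounts for the $\max\{0,2l-1\}$ in the conclusion.
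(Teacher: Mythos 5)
Your argument is correct, but it follows a genuinely different route from the paper. The paper factors $\big(1-\Delta_{v}\big)^{\alpha/2}=\big(1-\Delta_{v}\big)\big(1-\Delta_{v}\big)^{-(1-\alpha/2)}$, commutes the negative-order Bessel potential with the weight, Taylor-expands $\langle v'\rangle^{2l}$ to second order (gradient plus Hessian remainder), and then invokes asymptotic properties of Bessel kernels near the origin to control the resulting kernels; the weight bounds $|\nabla\langle v\rangle^{2l}|\sim\langle v\rangle^{2l-1}$ and the Hessian bound produce the exponent $\max\{0,2l-1\}$. You instead represent $\big(1-\Delta_{v}\big)^{\alpha/2}$ by Bochner subordination of the heat semigroup, reduce the commutator to $\int_{0}^{\infty}\mathrm{e}^{-t}\,[\mathrm{e}^{t\Delta_{v}},\langle v\rangle^{2l}]\,t^{-1-\alpha/2}\,\mathrm{d}t$, and estimate the heat commutator by a first-order (Lipschitz/mean-value) bound on the weight difference, Peetre's inequality to shift weights, and Young's inequality, getting $\|K_{t}\|_{L^{1}}\lesssim\sqrt{t}\max\{1,t^{N/2}\}$ so that the $t$-integral converges precisely because $\alpha<1$. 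Both routes deliver the same weight gain $\max\{0,2l-1\}$; yours is more self-contained (only Gaussian kernel estimates and Young's inequality, no Bessel-kernel asymptotics and no second-order Taylor expansion, since the singularity is entirely carried by the explicit factor $t^{-1-\alpha/2}$), and it makes the role of the restriction $\alpha\in(0,1)$ transparent through the integrability of $t^{-(1+\alpha)/2}$ at $t=0$. The paper's kernel-side computation, on the other hand, keeps the commutator remainder in an explicit integral-operator form that can in principle be refined further. Your density remark is the right way to handle the meaning of \eqref{commutator} for general suitable $\varphi$, matching the level of rigor of the paper's own statement.
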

\begin{proof}
This type of formulas are common in the literature, thus, we will be brief and leave the computational details to the reader. 

\smallskip
\noindent
Keep in mind the identity $\big( 1-\Delta_{v} \big)^{\alpha/2}=\big( 1-\Delta_{v} \big)\big( 1-\Delta_{v} \big)^{-(1-\alpha/2)}$.  Let $\mathcal{B}_{1-\alpha/2}$ be the Bessel kernel associated to $\big( 1-\Delta_{v} \big)^{-(1-\alpha/2)}$, then
\begin{align*}
\big( 1-\Delta_{v} \big)^{-(1-\alpha/2)}\big(\langle \cdot \rangle^{2l}\varphi\big) &= \langle v \rangle^{2l}\big( 1-\Delta_{v} \big)^{-(1-\alpha/2)}\varphi \\
& \qquad+ \int_{\mathbb{R}^{d}}\mathcal{B}_{1-\alpha/2}(v'-v)\big(\langle v' \rangle^{2l} - \langle v \rangle^{2l} \big)\varphi(v')\text{d}v'\,.
\end{align*}
We define
\begin{equation*}
\mathcal{R}_{1}(\varphi):=(1-\Delta_{v})\int_{\mathbb{R}^{d}}\mathcal{B}_{1-\alpha/2}(v'-v)\big(\langle v' \rangle^{2l} - \langle v \rangle^{2l} \big)\varphi(v')\text{d}v'\,,
\end{equation*}
and prove that satisfies estimate \eqref{commutator-rem}.  Using Taylor expansion, it follows that
\begin{equation*}
\langle v' \rangle^{2l} = \langle v + (v' - v) \rangle^{2l} = \langle v \rangle^{2l} + \nabla\langle v \rangle^{2l}\cdot(v' - v) + (v' - v)\cdot\mathcal{H}(v',v)(v' - v)\,,
\end{equation*}
where the remainder is given by
\begin{equation*}
\int^{1}_{0}(1-\tau)\nabla^{2}\langle \cdot \rangle^{2l}\big( \tau v' + (1-\tau)v \big)\text{d}\tau\,.
\end{equation*}
For the first order term in the Taylor expansion, it holds asymptotically  
\begin{equation*}
\mathcal{B}_{1-\alpha/2}(v'-v)(v'-v) \sim \big( 1 - \Delta_{v}\big)^{-1 - \frac{1 - \alpha}{2} }\,,\quad\text{for}\quad \ |v|\ll 1\,,
\end{equation*}
and consequently, 
\begin{equation*}
(1-\Delta_{v})\mathcal{B}_{1-\alpha/2}(v'-v)(v'-v) \sim \big( 1 - \Delta_{v}\big)^{-\frac{1-\alpha}{2} }\,,\quad\text{for}\quad \ |v|\ll 1\,.
\end{equation*}
Similarly, for the second order term one has the asymptotic behaviour
\begin{equation*}
\mathcal{B}_{1-\alpha/2}(v'-v) \times (v' - v)\cdot\mathcal{H}(v',v)(v' - v) \sim \big( 1 - \Delta_{v} \big)^{-2 + \frac{\alpha}{2} }\,,\quad\text{for}\quad |v|\ll 1\,,
\end{equation*}
and therefore, for $|v|\ll1$ it follows that
\begin{equation*}
(1 - \Delta_{v})\mathcal{B}_{1-\alpha/2}(v'-v) \times (v' - v)\cdot\mathcal{H}(v',v)(v' - v) \sim \big( 1 - \Delta_{v} \big)^{-1 + \frac{\alpha}{2} }\,.
\end{equation*}
Additionally and regarding the weight grow, note that
\begin{equation*}
\big| \nabla\langle v \rangle^{2l} \big| \sim \langle v \rangle^{ 2l-1}\,,\qquad  \big| \mathcal{H}(v',v) \big| \lesssim \langle v' \rangle^{ \max\{ 0 , 2l-2 \} }  + \langle v \rangle^{ \max\{ 0, 2l - 2\} }\,.
\end{equation*}
These observations together with classical asymptotic properties of Bessel kernels, see for instance \cite[Section 6.1.2]{Graf}, readily lead to \eqref{commutator-rem} for the operator $\mathcal{R}_{1}$.  The rest of the proof follows by the standard product rule of differentiation. 
\end{proof}

\noindent\textsc{Ricardo Alonso}\\
Departamento de Matem\'atica\\
Pontif\'icia Universidade Cat\'olica do Rio de Janeiro -- PUC-Rio\\
22451-900, G\'avea, Rio de Janeiro-RJ, Brazil\\
\noindent\texttt{ralonso@mat.puc-rio.br}

\bigskip

\noindent\textsc{Edison Cuba}\\
Departamento de Matem\'atica\\
Pontif\'icia Universidade Cat\'olica do Rio de Janeiro -- PUC-Rio\\
22451-900, G\'avea, Rio de Janeiro-RJ, Brazil\\
\noindent\texttt{edisonfausto@mat.puc-rio.br}

\end{document}